\newcommand{\fg}{\mathfrak g}
\newcommand{\fri}{\mathfrak i}
\DeclareRobustCommand{\frL}{\mathfrak{L}}
\newcommand{\cC}{{\mathcal C}}
\newcommand{\cB}{{\mathcal B}}
\newcommand{\cL}{{\mathcal L}}
\newcommand{\cP}{{\mathcal P}}
\newcommand{\cX}{{\mathcal X}}
\newcommand{\N}{\mathbb{N}}
\newcommand{\R}{\mathbb{R}}
\newcommand{\C}{\mathbb{C}}
\newcommand{\Z}{\mathbb{Z}}
\newcommand{\scF}{\mathscr{F}}
\newcommand{\scA}{\mathscr{A}}
\newcommand{\scB}{\mathscr{B}}
\newcommand{\scC}{\mathscr{C}}
\newcommand{\scD}{\mathscr{D}}
\newcommand{\scH}{\mathscr{H}}
\newcommand{\scJ}{\mathscr{J}}
\newcommand{\scG}{\mathscr{G}}
\newcommand{\scL}{\mathscr{L}}
\newcommand{\scM}{\mathscr{M}}
\newcommand{\scO}{\mathscr{O}}
 \newcommand{\scN}{\mathscr{N}}
\newcommand{\scS}{\mathscr{S}}
\newcommand{\scT}{\mathscr{T}}
\newcommand{\scP}{\mathscr{P}}
\newcommand{\sfC}{\mathsf{C}}
\newcommand{\Euc}{\mathsf{Euc}}
\newcommand{\Set}{\mathsf{Set}}
\newcommand{\Aff}{\mathsf{\cin Aff}}
\newcommand{\cring}{\cin\mathsf{Ring}}
\newcommand{\CGA}{\mathsf{CGA}}
\newcommand{\Mod}{\mathsf{Mod}}
\newcommand{\DiffSp}{\mathsf{DiffSpace}}
\newcommand{\LCRS}{\mathsf{L\cin RS}}
\newcommand{\CDGA}{\mathsf{CDGA}}
\newcommand{\Open}{\mathsf{Open}}
\newcommand{\Psh}{\mathsf{Psh}}
\newcommand{\Sh}{\mathsf{Sh}}
\newcommand{\sh}{\mathbf{sh}}
\DeclareMathAlphabet{\mathpzc}{OT1}{pzc}{m}{it}
\newcommand{\inv}{^{-1}}
\newcommand{\op}[1]{{#1}^{\mbox{\sf{\tiny{op}}}}}
\newcommand{\cin}{C^\infty}
\newcommand{\bOmega}{\bm{\Upomega}}
\newcommand{\bi}{\bm{i}}
\newcommand{\bd}{\bm{d}}
\DeclareMathOperator{\supp}{supp}
\DeclareMathOperator{\Der}{Der}
\DeclareMathOperator{\cDer}{C^\infty Der}
\DeclareMathOperator{\Dert}{\mathit{Der}\,}
\DeclareMathOperator{\Hom}{Hom}
\DeclareMathOperator{\Spec}{Spec}
\DeclareMathOperator{\pr}{\mathsf{pr}}
\def \cHom {{\mathcal Hom}}%{\underline{Hom}}
\def \Dert {{\mathcal Der\,}}
\newcommand{\bfi}{ \bm{\fri}}
\theoremstyle{definition}
\newtheorem{thm}{Theorem}[section]
\newtheorem{lemma}[thm]{Lemma}
\newtheorem{theorem}[thm]{Theorem}
\newtheorem{proposition}[thm]{Proposition}
\newtheorem{corollary}[thm]{Corollary}%%%[section]
\newtheorem*{corollary*}{Corollary}
\newtheorem*{claim*}{Claim}
\newtheorem{definition}[thm]{Definition}
\newtheorem{remark}[thm]{Remark}
\newtheorem*{remark*}{Remark}
\newtheorem{example}[thm]{Example}
\newtheorem{notation}[thm]{Notation}
\newtheorem {construction}[thm]{Construction}
\numberwithin{equation}{thm}
\begin{document}
\title{Cartan calculus for $C^\infty$-ringed spaces} 
\author{ Eugene Lerman}

\setcounter{tocdepth}{1}

\begin{abstract} In an earlier paper \cite{LdR} we constructed a complex of
  differential forms on a local $C^\infty$-ringed space.  In this
  paper we define a sheaf of vector fields (``the tangent sheaf'') on a
  local $C^\infty$-ringed space, define contractions of vector fields
  and forms, Lie derivatives of forms with respect to vector fields,
  and show that the standard equations of
  Cartan calculus hold for vector fields and differential forms on
  local $C^\infty$-ringed spaces.
  \end{abstract}
\maketitle
\tableofcontents

\section{Introduction}

The goal of this paper is to construct an analogue of the usual Cartan
calculus on manifolds for a larger class of spaces on which the notions
of  vector fields and of differential forms make sense.
Specifically we carry out the construction 
for local $\cin$-ringed spaces.   Local $\cin$-ringed spaces   includes manifolds,
manifolds with corners, differential spaces in the sense of Sikorski \cite{Sn},
$C^\infty$-differentiable  space of Navarro Gonz\'{a}lez and Sancho de
Salas \cite{NGSS}, affine $\cin$-schemes of Dubuc \cite{Dubuc} and
Kreck's stratifolds \cite{Kreck}.

The paper is a third in a series of several papers developing 
  differential geometry on $\cin$-ringed spaces (as opposed to {\em
  algebraic} geometry as developed by Joyce \cite{Joy} following the
pioneering work of Dubuc \cite{Dubuc}).  
The first paper \cite{KL} (joint with Yael Karshon)
proves the existence of flows of vector fields.   The second
\cite{LdR} constructs differential forms.  In the case of differential
spaces the module of these differential  forms maps surjectively onto
the module of differential forms constructed by Mostow
\cite{Mostow}.

Singular spaces, that is, spaces that are not manifolds, arise
naturally in differential geometry and its applications to physics
(e.g., \cite{SL},  \cite{Sn}),
engineering (e.g., \cite{CGKS}) and economics (e.g., \cite{Simsek}).
There are many approaches to differential geometry on singular spaces
and there is a vast literature which we will not attempt to survey.
For us a singular space is a local $\cin$-ringed space (diffeological
spaces \cite{IZ} is another popular approach; it is less well suited
for handling vector fields).

Our ultimate goal of developing differential geometry on $\cin$-ringed
spaces is to have enough tools on hand in order to be able to do
geometric mechanics on these spaces.  For example, {\em configuration}
spaces of mechanical linkages have been studied intensely from various
points of view.  Again, the literature is to vast to survey here. The
{\em phase} spaces of linkages and the dynamics on these phase spaces
are less well understood.  We expect them to be hybrid dynamical
systems with the continuous time dynamics taking place on differential
spaces.  A coordinate-free derivation of Euler-Lagrange equations on
manifolds uses Cartan calculus.  Since the coordinate-based version of
Euler-Lagrange equations on differential spaces does not make sense,
we need Cartan calculus on differential spaces just to write
down the Euler-Lagrange equations.  But it is no harder to develop
Cartan calculus on arbitrary $\cin$-ringed spaces, which is what we do
here.\\

Recall one possible formulation of Cartan calculus on manifolds.
First of all, we can think of the space $\chi(M)$ vector fields on a $\cin$-manifold $M$
as the Lie algebra of derivations of the algebra $\cin(M)$ of smooth
functions (the Lie bracket of two vector fields is their commutator:
$[X,Y] = X\circ Y - Y\circ X$, as usual).   
We also  have a graded commutative algebra of
differential forms $\bOmega_{dR}^\bullet (M)= \{\bOmega_{dR}^n
(M)\}_{n=0}^\infty$ together with the exterior derivative $d: \bOmega_{dR}^\bullet (M) \to
\bOmega_{dR}^{\bullet +1}  (M)$.  The exterior derivative is a degree +1
derivation of the exterior algebra $\bOmega_{dR}^\bullet(M)$ hence the triple
$(\bOmega_{dR}^\bullet (M), \wedge, d)$ is a commutative differential graded
algebra (a CDGA). 
For each vector field $v$ on $M$ we have a contraction
$\imath_v: \bOmega_{dR}^\bullet (M) \to \bOmega_{dR}^{\bullet -1}
(M)$.  These contractions are degree -1 derivations of the exterior
algebra.  Consequently we have a
map
\[
\imath: \chi (M)\to \Der^{-1} (\bOmega_{dR}^\bullet (M) ),
\] 
of $\cin(M)$-modules.
For every vector field $v$ the graded commutator
\[
[d, \imath_v] = d \circ \imath_v + \imath_v \circ d
\]
of a degree 1 derivation $d$ and a degree -1 derivation $\imath_v$ 
is a degree 0 derivation of the  graded commutative algebra
$\bOmega_{dR}^\bullet (M)$.  (Recall that the graded commutator of two
  derivations $X$, $Y$ of degrees $|X|$ and $|Y|$, respectively, is by
  definition
  \[
[X,Y] := X\circ Y - (-1) ^{|X||Y|} Y\circ X.
\]
The commutator $[X,Y]$  is a derivation of degree $|X| +|Y|$.)

The commutator  $[d, \imath_v] $ happens to
agree with the Lie derivative $\cL_v$, which is usually defined
geometrically, by way of the local flow of the vector field $v$.  This
fact is known as Cartan's magic formula:
\begin{equation} \label{eq:c-1}
\cL_v = d \circ \imath_v + \imath_v \circ d.
\end{equation}
It is well-known that the
exterior derivative, contractions and Lie derivatives satisfy the
following identities for all vector fields $v,w$ on $M$:
\begin{align}
  \cL_v \circ d &= d\circ \cL_v \label{eq:c0}\\
\cL_{[v,w]} &= \cL_v \circ \cL_w - \cL_w \circ \cL_v\, (=: 
  [\cL_v,
  \cL_w] )\label{eq:c1}  \\
\cL_v \circ \imath_w - \imath_w \circ \cL_v&= \imath_{[v,w]}\label{eq:c2}\\
\imath_v \circ \imath_w +\imath_w\circ \imath _v &=0. \label{eq:c3}
\end{align}
The identities \eqref{eq:c-1} - \eqref{eq:c3}
are known as Cartan calculus.\\[6pt]
Recall that, roughly speaking, a {\sf  $\cin$-ring } is a (nonempty) {\em set} $\scC$ together with
operations (i.e., functions) \label{page:c-ring}
\[
g_\scC:\scC^m\to \scC
\]
for all $m$ and all $g\in C^\infty (\R^m)$
such that for all
$n,m\geq 0$, %
 all $g\in C^\infty(\R^m)$ and all
$f_1, \ldots, f_m\in C^\infty (\R^n)$
\begin{equation} \label{eq:2.assoc}
({g\circ(f_1,\ldots, f_m)})_\scC (c_1,\ldots, c_n) =
g_\scC({(f_1)}_\scC(c_1,\ldots, c_n), \ldots, {(f_m)}_\scC(c_1,\ldots, c_n))
\end{equation}
for all $(c_1, \ldots, c_n) \in \scC^n$.  The precise definition is
Definition~\ref{def:C2} below.  If $M$ is a manifold then the algebra
$\cin(M)$ of smooth function on $M$ is a $\cin$-ring: for any $n\geq
0$ and for any $h\in \cin(\R^n)$ the corresponding operation
$h_{\cin(M)} :(\cin(M))^n \to \cin(M)$ is given by
\[
h_{\cin(M)}(f_1,\ldots, f_n) := h\circ (f_1,\ldots, f_n) 
\]  
for all $f_1,\ldots, f_n\in \cin(M)$ (on the right $(f_1,\ldots, f_n)$
is a function from $M$ to $\R^n$).  The real line $\R$ is
also a $\cin$-ring: for all $n\geq
0$ and  any $h\in \cin(\R^n)$ the corresponding operation $h_\R:
\R^n\to \R$ is given by
\[
h_\R(x_1,\ldots, x_n): = h(x_1,\ldots, x_n).
\]

A map/morphism from a $\cin$-ring $\scA$ to a $\cin$-ring $\scB$ is a
function $\varphi:\scA\to \scB$ that preserves all operations:
\[
\varphi (h_\scA(a_1,\ldots, a_n)) = h_\scB (\varphi(a_1),\ldots \varphi(a_n))
\]
for all $n$, all $a_1,\ldots, a_n \in \scA$ and al $h\in \cin(\R^n)$.
A $\cin$-ring $\scC$ is {\sf local} iff there is a unique map $p:\scC
\to \R$ of $\cin$-rings with $p(1_\scC) = 1$.   A {\sf local $\cin$-ringed space}
($\LCRS$) is a pair $(M,\scA)$ where $M$ is a topological space and
$\scA$ is a sheaf of $\cin$-rings with the additional property that
the stalks of the sheaf are local $\cin$-rings.   We will refer to the
sheaf $\scA$ as the {\sf structure sheaf} of an $\LCRS$ $(M,\scA)$.

It is not hard to formulate and prove the Cartan calculus for a
$\cin$-ring.  
One starts with the notion of
a {\sf module} $\scM$ over a $\cin$-ring $\scC$: this is just a module
over the commutative $\R$-algebra underlying $\scC$ (any $\cin$-ring
has an underlying $\R$-algebra, see Remark~\ref{rmrk:cin-R-alg}).  A {\sf
  $\cin$-ring derivation} of a $\cin$-ring $\scC$ with values in a
module $\scM$ is a function $v:\scC \to \scM$ so that for any $n$, any
$h\in \cin(\R^n)$ and any $n$-tuple $(c_1,\ldots, c_n) \in \scC^n$
\[
v (h_\scC(c_1,\ldots, c_n)) = \sum _i (\partial _i h)_\scC(c_1,\ldots,
c_n))\cdot v(c_i).
\]
Here $\cdot$ denotes the action of the $\cin$-ring $\scC$ on the
module $\scM$.  $\cin$-derivations with values in a module $\scM$ form
a $\scC$-module which we denote by $\cDer (\scC, \scM)$.  We
abbreviate $\cDer(\scC, \scC)$ as $\cDer(\scC)$.   One can show that
for any two derivations $v,w\in \cDer(\scC)$ their commutator
\[
[v,w]: = v\circ w - w\circ v
\]  
is again a $\cin$-ring derivation.   (The proof boils down to the fact
that mixed partials commute.)  Consequently $\cDer(\scC)$ is a
Lie algebra.

The module {\sf of $\cin$-K\"ahler differentials} of a
$\cin$-ring $\scC$ is a $\scC$-module $\Omega_\scC^1$ together with a
{\em universal}
derivation $d_\scC:\scC \to \Omega_\scC^1$.  As in commutative algebra
``universal'' means that for any module
$\scM$ and any derivation $v:\scC\to \scM$ there exists a unique map
$\iota(v): \Omega^1_\scC\to \scM$ with $
\iota(v) \circ d_\scC = v$.
In other words, 
\[
d_\scC^*:\Hom_\scC (\Omega^1_\scC, \scM) \to \cDer(\scC, \scM),\qquad
\varphi\to \varphi \circ d_\scC
\]  
is an isomorphism of $\scC$-modules.

For any $\scC$-module $\scM$ the exterior powers $\scM^\bullet = \{\Lambda^k
\scM\}_{k\geq 0}$ form form a commutative graded algebra (a CGA) under
the exterior product $\wedge$.
Consequently for any $\cin$-ring $\scC$ we have the CGA  $(\Lambda
^\bullet \Omega^1_\scC, \wedge)$.  One can
show \cite{LdR} that the universal derivation $d_\scC:\scC\to \Omega^1_\scC$
extends uniquely to a degree $+1$ derivation $d$ 
 of $(\Lambda^\bullet
\Omega^1_\scC, \wedge)$ with $d\circ d =0$.  This is in complete
analogy with Grothendieck's algebraic de Rham forms.  We thus get a commutative
{\em differential} graded algebra (a CDGA) $(\Lambda^\bullet
\Omega^1_\scC, \wedge, d)$.  We refer to this CDGA as the {\sf
  $\cin$-algebraic de Rham complex} of the $\cin$-ring $\scC$. 
  
Note that in the case where the $\cin$-ring $\scC$ is the $\cin$-ring of
smooth functions $\cin(M)$ on a manifold $M$ the CDGA
$\Lambda^\bullet \Omega^1_{\cin(M)}$ of $\cin$-algebraic differential
forms  is the CDGA $\bOmega^\bullet_{dR}(M)$ of ordinary de Rham
differential forms on the manifold $M$ (see \cite{LdR}).   This is not
completely obvious.

For any module $\scM$ over an $\R$-algebra $A$ a homomorphism
$\varphi:\scM =\Lambda^1\scM \to A =\Lambda^0\scM$ extends uniquely to a degree -1 derivation $
\varphi^\bullet:\Lambda^\bullet \scM\to \Lambda^{\bullet -1} \scM $ of the
CGA $(\Lambda^\bullet \scM, \wedge)$.   Consequently there is an
isomorphism of $A$-modules
\[
 \Hom(\scM, A) \xrightarrow{\, \simeq \,} \Der^{-1} (\Lambda^\bullet \scM).
\]  
Since for any $\cin$-ring $\scC$ the module of $\cin$-derivations
$\cin\Der(\scC)$ is isomorphic, as an $\scC$-module, to
$\Hom(\Omega^1_\scC, \scC)$ there is a canonical injective map
\[
\iota_\scC: \cin\Der(\scC) \to \Der^{-1} (\Lambda^\bullet\Omega^1_\scC),
\]
the {\sf contraction map}. 
The map $\iota_\scC$ is a map of $\scC$-modules.

In differential geometry Lie derivatives of differential forms with
respect to vector fields are
usually defined in terms of flows of the vector fields.  Given a
derivation of a 
$\cin$-ring  it is not clear where or how to integrate this
derivation to a flow.   Thanks to the work of Dubuc, there is a
spectrum functor $\Spec$ for $\cin$-rings that sends them to
$\cin$-ringed spaces.  Unlike the case of
commutative rings the functor $\Spec$  is not
terribly well behaved.  For example there are  nonzero $\cin$-rings
with the same spectrum as $\Spec(0)$, the spectrum of the zero
ring. In particular their set of points is empty.   And even when the $\cin$-ring $\scC$ in question is
the ring of functions on the space underlying $\Spec(\scC)$ and one does know how to integrate a
derivation of $\scC$ to a flow, it may well happen that different
derivations have exactly the same flows.  (The example I have in mind
here is the standard Cantor set $C \subset \R$.  The vector field  $\frac{d}{dx}$
on the real line
``restricts'' to a nontrivial  derivation  $\frac{d}{dx} |_C$ on
$\cin(C):= \cin(\R)|_C$ but its flow only
exists for time 0.  Moreover, for any smooth function $f\in \cin(\R)$
the restriction $(f\frac{d}{dx}) |_C$ has exactly the same trivial
flow as  $\frac{d}{dx} |_C$.) For these reasons 
given  a derivation $v\in \cDer(\scC)$ we   {\em define}
the Lie derivative $\cL_\scC(v): \Lambda^\bullet \Omega^1_\scC  \to
\Lambda^\bullet \Omega^1_\scC $ by the Cartan magic formula:
\[
\cL_\scC(v) := \iota_\scC(v) \circ d + d\circ \iota_\scC(v).  
\]
The Lie derivative $\cL_\scC(v)$ is a degree 0 derivation of the CGA
$(\Lambda^\bullet\Omega^1_\scC, \wedge)$  for every $v\in \cin\Der(\scC)$.
We thus obtain an $\R$-linear map
\[
\cL_\scC: \cin\Der(\scC) \to \Der^{0} (\Lambda^\bullet\Omega^1_\scC),
\qquad v\mapsto \cL_\scC(v) = \iota_\scC(v) \circ d + d\circ \iota_\scC(v),
\]  
which we call the {\sf Lie derivative map. }    We are now in position to formulate  Cartan
calculus on a $\cin$-ring.  We drop the subscript ${\, }_\scC$ to reduce
the clutter.

\begin{theorem} \label{prop:cartan_calc_level0}
For any $\cin$-ring $\scC$ the contraction $\iota: \cin\Der(\scC) \to \Der^{-1}
(\Lambda^\bullet\Omega^1_\scC)$   and the Lie derivative $\cL:  \cDer(\scC)
\to \Der^{0} (\Lambda^\bullet\Omega^1_\scC)$ maps, which were
described above,  satisfy the
following identities: 
\begin{align*}
&(\textrm{i})\quad \cL(v) \circ d = d \circ \cL(v) \qquad \qquad \qquad \qquad (\textrm{ii}) &\cL([v,w]) = [\cL(v), \cL(w)]\\
&(\textrm{iii})\quad[\cL(v), \iota(w)] = \iota([v,w]) \qquad \qquad \qquad (\textrm{iv}) &[\iota(v),
                                                      \iota(w)] = 0\\
&(\textrm{v}) \quad \cL(v) = [d, \iota(v)]
\end{align*}
for all $v,w\in \cDer (\scC)$.
\end{theorem}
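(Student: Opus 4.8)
The plan is to prove all five identities purely algebraically, exploiting the fact that every operator in sight is a (graded) derivation of the exterior algebra $(\Lambda^\bullet \Omega^1_\scC, \wedge)$, together with the principle that a derivation of $\Lambda^\bullet \Omega^1_\scC$ is determined by its action on generators, i.e.\ on $\Lambda^0 \Omega^1_\scC = \scC$ and on $\Lambda^1 \Omega^1_\scC = \Omega^1_\scC$. Since $\Omega^1_\scC$ is generated as a $\scC$-module by the exact forms $d_\scC c$ for $c\in\scC$ (this is part of the universal property of $\cin$-K\"ahler differentials), and since $d$ kills exact forms, checking an identity on $\scC$ and on elements of the form $d_\scC c$ will suffice, provided both sides of the identity are known in advance to be derivations of the appropriate degree.

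First I would record the bracket arithmetic of the three graded derivations involved. Identity (v) is the \emph{definition} of $\cL(v)$ and the definition of the graded commutator $[d,\iota(v)] = d\circ\iota(v) + \iota(v)\circ d$ (the sign is $+$ because $|d|\cdot|\iota(v)| = 1\cdot(-1)$ is odd), so there is nothing to prove there. For (i), I would use $d\circ d = 0$ and the Jacobi identity for the graded commutator of derivations: $[d,\cL(v)] = [d,[d,\iota(v)]] = [[d,d],\iota(v)] - [d,[d,\iota(v)]]$ forces $2[d,\cL(v)] = [[d,d],\iota(v)] = 0$ since $[d,d] = 2d^2 = 0$; as $\cL(v)$ has degree $0$, $[d,\cL(v)] = d\circ\cL(v) - \cL(v)\circ d$, giving (i). Identity (iv) is immediate: $\iota(v)$ and $\iota(w)$ are both degree $-1$ derivations, so $[\iota(v),\iota(w)] = \iota(v)\iota(w) + \iota(w)\iota(v)$ is a degree $-2$ derivation of $\Lambda^\bullet\Omega^1_\scC$; it vanishes on $\scC$ trivially (degree reasons, landing in $\Lambda^{-2} = 0$) and on $\Omega^1_\scC$ because $\iota(v)\iota(w)$ already lands in $\Lambda^{-1} = 0$ there — hence it vanishes on all generators, so it is zero.

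For (iii) and (ii) I would again argue that both sides are derivations of the same degree ($-1$ and $0$ respectively) and compare on generators. For (iii): $[\cL(v),\iota(w)] = \cL(v)\iota(w) - \iota(w)\cL(v)$ is a degree $-1$ derivation; evaluate both it and $\iota([v,w])$ on $c\in\scC$ (both give $0$, as $\iota$ of anything kills $\Lambda^0$) and on $d_\scC c$. On $d_\scC c$, unwinding the definitions, $\iota(w)(d_\scC c) = w(c)$, $\cL(v)(d_\scC c) = d_\scC(v(c))$ and $\cL(v)(f) = v(f)$ for $f\in\scC$ (this last being Cartan's formula on functions), so the left side becomes $v(w(c)) - w(v(c)) = [v,w](c) = \iota([v,w])(d_\scC c)$; they agree on generators, hence everywhere. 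For (ii): expand $[\cL(v),\cL(w)] = [[d,\iota(v)],[d,\iota(w)]]$ using the graded Jacobi identity and $d^2 = 0$; one should land on $[d,[\iota(v),[d,\iota(w)]]] = [d,[\iota(v),\cL(w)]] = [d, -\iota([v,w])] = \cL([v,w])$ up to signs, using (iii) in the middle step. Alternatively, and perhaps more cleanly for the write-up, verify (ii) directly on generators using (i) and the already-established formula $\cL(v)(c) = v(c)$, $\cL(v)(d_\scC c) = d_\scC(v(c))$.

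The main obstacle is not conceptual but bookkeeping: getting the signs right in the graded Jacobi identity
\[
[X,[Y,Z]] = [[X,Y],Z] + (-1)^{|X||Y|}[Y,[X,Z]]
\]
for derivations, and being careful that "a derivation determined by its values on generators" is legitimate — i.e.\ that the $\scC$-module and algebra structure of $\Lambda^\bullet\Omega^1_\scC$ genuinely makes $\Omega^1_\scC$ a generating submodule in degree $1$, which is exactly what the universal property from \cite{LdR} supplies. Once the "compare-on-generators" lemma is stated once, each of (i)–(v) is a two-line verification, so I would front-load that lemma and then dispatch the five identities in quick succession.
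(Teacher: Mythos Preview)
Your proposal is correct and follows essentially the same strategy as the paper: both reduce everything to the principle that a graded derivation of $\Lambda^\bullet\Omega^1_\scC$ is determined by its values on $\scC$ and on the generators $d_\scC c$ of $\Omega^1_\scC$, and both handle (iv) and (iii) in exactly the way you describe. The only real differences are stylistic: for (i) the paper does the one-line direct computation $\cL(v)\circ d = (d\,\iota(v)+\iota(v)\,d)\circ d = d\,\iota(v)\,d = d\circ\cL(v)$ rather than invoking graded Jacobi, and for (ii) the paper uses your second alternative (both sides are degree~$0$ derivations commuting with $d$, so it suffices to check on $\scC$), whereas your first alternative derives (ii) from (iii) via Jacobi. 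Your Jacobi route for (ii) is a nice shortcut once (iii) is in hand, but note the sign: $[\iota(v),\cL(w)] = -[\cL(w),\iota(v)] = -\iota([w,v]) = \iota([v,w])$, not $-\iota([v,w])$, so the final step reads $[d,\iota([v,w])]=\cL([v,w])$ directly.
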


The Cartan calculus of Theorem~\ref{prop:cartan_calc_level0} 
does agree with the usual Cartan calculus on  manifolds.   As was
mentioned previously when $\scC = \cin(M)$ for a smooth manifold $M$,
the CDGA 
$\Lambda^\bullet \Omega^1_{\cin(M)}$ of $\cin$-algebraic differential
forms  is the CDGA $\bOmega^\bullet_{dR}(M)$ of ordinary de Rham
differential forms on the manifold $M$, the Lie algebra
$\cin\Der(\cin(M)$ is the Lie algebra of vector fields $\chi(M)$ on $M$, the
contraction map $\iota:\cin\Der (\cin(M)) \to \Der^{-1}
(\Lambda^\bullet\Omega^1_{\cin(M)})$ is the usual  contraction map $\imath:
\chi(M)\to \Der^{-1}(\bOmega_{dR}^\bullet)$  between vector fields and
differential forms, and so on. \\[6pt]
Given a local $\cin$-ringed space $(M, \scA)$ it may be tempting to
construct the corresponding Cartan calculus as follows: let $\scC=
\scA(M)$, the $\cin$-ring of global sections of the structure sheaf
$\scA$.
We then declare the real vector space $\cDer (\scC)$ with the
commutator bracket to be the Lie algebra of vector fields on
$(M,\scA)$.  In the same spirit we declare the commutative
differential graded algebra $(\Lambda^\bullet \Omega ^1_{\scA(M)},
\wedge, d)$ to be the CDGA of differential forms.  This, however, is
probably too naive.  In particular it does not capture the locality of
Cartan calculus on manifolds.   On a manifold $M$ vector
fields and differential forms are  sections the appropriate
vector bundles, and consequently, are sections of sheaves.  For
a general $\LCRS$ $(M,\scA)$ the presheaf of $\cin$-K\"ahler differentials
$\Omega^1_\scA$, defined by
\[
U\mapsto \Omega^1_{\scA(U)}
\]  
for all open sets $U\subset M$, is not necessarily a sheaf.  And even
when it is a sheaf, there is no reason for its exterior powers
$\Lambda^k \Omega^1_\scA$ to be sheaves.  So the ``right'' object to
consider as a replacement of the de Rham complex on a manifold is the
CDGA $(\bOmega_\scA^\bullet , \wedge, \bd)$ obtained by sheafification of the presheaf
\[
U\mapsto (\Lambda^\bullet \Omega^1_{\scA(U)}, \wedge , d),
 \] 
cf.\ \cite{LdR}. But then there is no reason for the global sections of the presheaf
$\Lambda^k \Omega ^1_{\scA}$ and its sheafification $\bOmega ^k_{\scA}$ to be equal.

The situation with derivations is worse.  Except for some
cases (such as differential spaces, see Appendix~\ref{app:A}), given a
local $\cin$-ringed space $(M,\scA)$ and two open subsets $V, U$ of
$M$ with $V\subset U$ there is no evident ``restriction'' map
\[
\cDer(\scA(U)) \to \cDer(\scA(V)).
\]  
Recall that even in the case where $M$ is a manifold and $\scA $ is
the sheaf $\cin_M$ of smooth functions on $M$  the construction of
such a restriction requires a bit of work.   It can be built directly
using bump functions or indirectly by identifying $\cDer (\cin_M(U)) =
\cDer (\cin(U))$ with the sections of the restricted tangent bundle
$TM|_U\to U$.

The solution to these difficulties, I believe, is to reformulate
Cartan calculus in terms of sheaves.  This means, in particular, that
the calculus needs to be stated in terms of maps rather than in terms
of elements, as it is traditionally done and as we did in
Theorem~\ref{prop:cartan_calc_level0}.\\[6pt]
Just as in ordinary algebraic geometry, given two presheaves $\scM$, $\scN$ of $\scA$-modules over
a local $\cin$-ringed space $(M,\scA)$ there is the so called hom
presheaf $\cHom(\scM, \scN)$ of $\scA$-modules; it is defined by
\[
  \cHom(\scM, \scN) (U): = \Hom(\scM|_U, \scN|_U)
\]
for all open subsets $U\subset M$.  Note that if $\scN$ is a sheaf
then $ \cHom(\scM, \scN)$ is a sheaf as well (the standard exercise in
algebraic geometry assumes that both $\scM$ and $\scN$ are sheaves,
but the assumption on $\scM$ is not necessary). As was mentioned above, given a
$\LCRS$ $(M,\scA)$ we have a sheaf of CDGAs
$(\bOmega_\scA^\bullet, \wedge, \sh(d))$ of $\cin$-algebraic de Rham
differential forms on $M$ which  is obtained by
sheafifying the presheaf
$
U\mapsto (\Lambda^\bullet \Omega^1_{\scA(U)}, \wedge, d).
$
There is also a sheaf of derivations $\cin\Dert(\scA)$, which is
isomorphic to the sheaf of $\scA$-modules $\cHom(\Omega^1_\scA, \scA)
\simeq \cHom(\bOmega^1_\scA,\scA)$ ($\bOmega^1_\scA$
denotes the sheafification of the presheaf $\Omega^1_\scA$ of $\cin$-K\"ahler
differentials of the sheaf $\scA$).  The
sheaf $\cin \Dert(\scA)$ is a sheaf of real Lie algebras.

On the other hand, for any sheaf $(\scM^\bullet, \wedge)$ of graded
commutative algebras over $(M,\scA)$ there is a graded presheaf
$\Dert^\bullet (\scM^\bullet)$ of graded derivations of
$(\scM^\bullet, \wedge)$.  The presheaf $\Dert^\bullet (\scM^\bullet)$
is a presheaf of real graded Lie algebras.  It is not hard to check
that the contraction map
$\iota_\scC: \cin\Der(\scC) \to \Der^{-1}
(\Lambda^\bullet\Omega^1_\scC)$ is natural in the ring $\scC$.
Consequently it can be upgraded to a map of presheaves
\[
\bi: \cin\Dert(\scA)\to \Dert^{-1} (\Lambda^\bullet
\Omega^1_\scA )
\]
and then, using the fact that sheafification is a
functor, to a map 
\[
\fri   :\cin\Dert(\scA)\to \Dert^{-1} (\bOmega^\bullet_\scA)
\]
of presheaves, which  is our {\sf contraction map}.

The Lie derivative map $\cL_\scC: \cin\Der(\scC) \to \Der^{0}
(\Lambda^\bullet\Omega^1_\scC)$ is also natural in the $\cin$-ring
$\scC$.  So it too can be upgraded to a map of presheaves
\[
\scL: \cin\Dert(\scA)\to \Dert^{0} (\Lambda^\bullet
\Omega^1_\scA )
\]
and then to the map 
\[
\frL :\cin\Dert(\scA)\to \Dert^{0} (\bOmega^\bullet_\scA),
\]
 the {\sf Lie derivative} map.
The exterior derivative $\sh(d)\in \Der^1 (\bOmega^\bullet_\scA)$ (the
sheafification of $d\in \Der^1(\Lambda^\bullet \Omega_\scA)$) gives rise
to a map of (graded) presheaves
\[
  {ad}(\sh(d)): \Dert^\bullet (\bOmega^\bullet_\scA) \to
  \Dert^{\bullet +1} (\bOmega^\bullet_\scA)
\]  
With these
definitions/constructions in place we can state the  main result of the paper:

\begin{theorem} \label{main_thm}  Let $(M,\scA)$ be a local $\cin$-ringed space,
  $\cin\Dert(\scA)$ the sheaf of $\cin$-derivations of the sheaf
  $\scA$ and $\Dert^\bullet (\bOmega_\scA^\bullet ) $ the presheaf of
  graded Lie algebras of graded derivations of the sheaf
of de Rham (differential) forms,
  $ad(\sh(d)): \Dert^\bullet (\bOmega^\bullet_\scA) \to
  \Dert^{\bullet +1} (\bOmega^\bullet_\scA)$ the corresponding map between
  graded derivations,
  $\bfi :\cin\Dert(\scA)\to \Dert^{-1}
  (\bOmega^\bullet_\scA)$ the contraction map and
  $ \frL :\cin\Dert(\scA)\to \Dert^{0} (\bOmega^\bullet_\scA)$
  the Lie derivative map.
Then the following equalities hold:
  \begin{eqnarray}
    ad(\sh(d))\circ \bfi &= &  \frL \label{eq:5.9j}\\
  ad(\sh(d))\circ  \frL &=&0 \label{eq:5.10j}\\
   \frL \circ [\cdot, \cdot] &= &[\cdot, \cdot] \circ ( \frL\times
  \frL)% : \cin\Dert(\scA)^2 \to \Dert^0 ( \bOmega^\bullet_\scA)
                                  \label{eq:5.11j}\\
 \, [\cdot, \cdot] \circ ( \frL\times \bfi) &= & \bfi \circ   [\cdot,
  \cdot] % : \cin\Dert(\scA)^2 \to \Dert^{-1} (
  % \bOmega^\bullet_\scA)
                                                 \label{eq:5.12j}\\
\, [\cdot, \cdot] \circ ( \bfi \times\bfi)
                                          &=& 0.
                                              \label{eq:5.13j}
\end{eqnarray}    
Here and elsewhere $[\cdot, \cdot]$ denotes both the Lie bracket on
$\cin\Dert(\scA)$ and the (graded) bracket on $ \Dert^{\bullet }
(\bOmega^\bullet_\scA)$, depending on the context.  We think of both brackets
as maps of sheaves.
\end{theorem}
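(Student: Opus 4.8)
The plan is to derive the five sheaf-level identities \eqref{eq:5.9j}--\eqref{eq:5.13j} by transporting, through the sheafification functor, the corresponding $\cin$-ring-level identities (i)--(v) of Theorem~\ref{prop:cartan_calc_level0}. I would do this in two stages.

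\emph{Stage 1: the identities for the presheaf $U\mapsto\Lambda^\bullet\Omega^1_{\scA(U)}$.} Using the isomorphism $\cin\Dert(\scA)\cong\cHom(\Omega^1_\scA,\scA)$ and the universal property of the $d_{\scA(V)}$, a section of $\cin\Dert(\scA)$ over an open set $U$ is the same as a family $(v_V)_{V\subseteq U}$ with $v_V\in\cin\Der(\scA(V))$ and $\rho^V_{V'}\circ v_V=v_{V'}\circ\rho^V_{V'}$ for $V'\subseteq V$, the $\rho^V_{V'}\colon\scA(V)\to\scA(V')$ being the restriction maps of $\scA$; and a section of $\Dert^\bullet(\Lambda^\bullet\Omega^1_\scA)$ over $U$ is a family of graded derivations $D_V$ of the $\cin$-algebraic de Rham complexes $\Lambda^\bullet\Omega^1_{\scA(V)}$, $V\subseteq U$, compatible with restriction. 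Invoking the naturality of $\iota_\scC$ and $\cL_\scC$ in the $\cin$-ring $\scC$ --- applied to the $\rho^V_{V'}$ and the relation $\rho^V_{V'}\circ v_V=v_{V'}\circ\rho^V_{V'}$ --- one checks that $(v_V)_V\mapsto(\iota_{\scA(V)}(v_V))_V$ and $(v_V)_V\mapsto(\cL_{\scA(V)}(v_V))_V$ are well defined and give morphisms of presheaves $\bi\colon\cin\Dert(\scA)\to\Dert^{-1}(\Lambda^\bullet\Omega^1_\scA)$ and $\scL\colon\cin\Dert(\scA)\to\Dert^{0}(\Lambda^\bullet\Omega^1_\scA)$; likewise the $d_{\scA(V)}$ glue to a global section $d$ of $\Dert^1(\Lambda^\bullet\Omega^1_\scA)$, hence to $ad(d)\colon\Dert^\bullet(\Lambda^\bullet\Omega^1_\scA)\to\Dert^{\bullet+1}(\Lambda^\bullet\Omega^1_\scA)$, and the two brackets are computed componentwise (commutator of $\cin$-derivations, resp.\ graded commutator of graded derivations). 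An equality of morphisms of presheaves is checked on each open $U$, and then in each $V$-component ($V\subseteq U$), where every operation above acts through $\iota_{\scA(V)}$, $\cL_{\scA(V)}$, $ad(d_{\scA(V)})$ or a (graded) commutator; hence the identities (i)--(v) of Theorem~\ref{prop:cartan_calc_level0} for the $\cin$-ring $\scA(V)$ yield precisely the presheaf analogues $\scL=ad(d)\circ\bi$, $ad(d)\circ\scL=0$, $\scL\circ[\cdot,\cdot]=[\cdot,\cdot]\circ(\scL\times\scL)$, $[\cdot,\cdot]\circ(\scL\times\bi)=\bi\circ[\cdot,\cdot]$, and $[\cdot,\cdot]\circ(\bi\times\bi)=0$.

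\emph{Stage 2: sheafify.} Over each open $U$, sheafification $\sh_U\colon\Pre(U)\to\Sh(U)$ is an additive functor that preserves finite products and commutes with restriction to smaller opens. Thus applying $\sh$ to a degree $k$ graded derivation $D$ of $\Lambda^\bullet\Omega^1_\scA|_U$ yields a degree $k$ graded derivation $\sh(D)$ of $\bOmega^\bullet_\scA|_U=\sh(\Lambda^\bullet\Omega^1_\scA|_U)$ --- the degree condition and the Leibniz rule being equalities of morphisms assembled from finite products, composites and sums, hence preserved --- and this defines a morphism of presheaves $\sigma\colon\Dert^k(\Lambda^\bullet\Omega^1_\scA)\to\Dert^k(\bOmega^\bullet_\scA)$. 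Since $\sh([d,D])=[\sh(d),\sh(D)]$ and $\sh([D_1,D_2])=[\sh(D_1),\sh(D_2)]$ --- both sides built from composition and sums --- we get $ad(\sh(d))\circ\sigma=\sigma\circ ad(d)$ and that $\sigma$ carries the presheaf brackets to the sheaf brackets; and, by construction, $\bfi=\sigma\circ\bi$ and $\frL=\sigma\circ\scL$. Applying $\sigma$ to the five presheaf identities of Stage~1 and using these compatibilities gives exactly \eqref{eq:5.9j}--\eqref{eq:5.13j}.

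I expect the main obstacle to be the bookkeeping of Stage~2: making precise that ``sheafify the derivation'' is a well-defined morphism of presheaves $\sigma$ into the graded derivations of the \emph{sheaf} $\bOmega^\bullet_\scA$, that $\sigma$ is compatible with $ad(d)$, with the brackets, and with restriction, and that $\bfi=\sigma\circ\bi$, $\frL=\sigma\circ\scL$ and $ad(\sh(d))\circ\sigma=\sigma\circ ad(d)$. Everything else reduces to Theorem~\ref{prop:cartan_calc_level0} applied to the $\cin$-rings $\scA(V)$, together with the (routine) naturality of $\iota_\scC$, $\cL_\scC$ and $d_\scC$ in $\scC$. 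A variant trading part of this bookkeeping for a stalk computation is also available: since $\bOmega^\bullet_\scA$ is a sheaf and $(\bOmega^\bullet_\scA)_x\cong\Lambda^\bullet\Omega^1_{\scA_x}$ (sheafification commutes with stalks, and $\Omega^1$ and exterior powers with filtered colimits), one may instead verify the five identities on the stalk at each point $x$, where they reduce to the identities (i)--(v) of Theorem~\ref{prop:cartan_calc_level0} for the local $\cin$-ring $\scA_x$.
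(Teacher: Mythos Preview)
Your proposal is correct and follows essentially the same two-stage strategy as the paper: first establish the five identities at the presheaf level for $\Lambda^\bullet\Omega^1_\scA$ by checking componentwise and invoking Theorem~\ref{prop:cartan_calc_level0} for each $\scA(V)$ (this is the paper's Theorem~\ref{thm:cartan-for-presheaves}), then transport them to $\bOmega^\bullet_\scA$ via the map you call $\sigma$ (the paper's $G$, later denoted $\sh$), using that $\sigma$ is a map of presheaves of graded Lie algebras and intertwines $ad(d)$ with $ad(\sh(d))$ (the paper's Lemmas~\ref{lem:4.9june} and~\ref{lem:4.10june}). Your identification of the Stage~2 bookkeeping as the main obstacle is exactly right; the paper devotes Lemmas~\ref{lem:4.8june}--\ref{lem:4.10june} to it. The stalk variant you mention at the end is not used in the paper but would also work.
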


\subsection*{Organization of the paper}
In Section~\ref{sec:2} we develop 
some mathematical background.  We start by recalling the definition of
a $\cin$-ring and proceed to discuss two classes of local
$\cin$-ringed spaces --- the differential spaces in the sense of
Sikorski and $\cin$-schemes of Dubuc.  This is not new, although
differential spaces and $\cin$-schemes are usually not discussed
together.  For a local $\cin$-ringed space $(M,\scA)$ we define the
$\scA(M)$ module $\cin\Der(\scA))$ of $\cin$-derivations of the
structure sheaf $\scA$ and prove that it is a real Lie algebra under
the commutator of maps of sheaves.  We then develop presheaves of
commutative graded algebras (CGAs) and commutative differential graded
algebras (CDGAs) over local $\cin$-ringed spaces.  Our main example of
such presheaves are sheaves of $\cin$-algebraic de Rham (i.e., differential) forms
introduced in \cite{LdR}.  We study presheaves of graded derivations
of presheaves of CGAs.  We then turn to internal homs and, in
particular, introduce the presheaf $\cin\Dert(\scA)$ of
$\cin$-derivations of a local $\cin$-ringed space $(M, \scA)$.  It is
a sheaf of real Lie algebras. The section ends with a proof that
presheaf of graded derivations of a presheaf of commutative graded
algebras over a local $\cin$-ringed spaces is a presheaf of real
graded Lie algebras.  In Section~\ref{sec:ring} we prove
Theorem~\ref{prop:cartan_calc_level0}.  In Section~\ref{sec:sheaf} we
prove Theorem~\ref{main_thm}.  In Section~\ref{sec:examples} we
compute examples of Lie algebras  of derivations of smooth functions
of differential spaces and show that while in one class of cases
(continuous functions on a topological space) the Lie algebra of
derivations is zero, it is nonzero for closed subsets of $\R^n$ with
dense interiors and  for $M = \{(x,y)\in \R^2 \mid xy =0\}$.

In Appendix~\ref{app:A} we prove that for a differential space $(M,
\scF)$ the global sections functor $\Gamma: \cin\Der(\scF_M) \to  \cin\Der(\scF)$
is an isomorphism of Lie algebras, where  $\cin\Der(\scF_M)$ is the
Lie algebra of $\cin$-derivations of
the structure sheaf $\scF_M$ and $\cin\Der(\scF)$ is the Lie algebra
of $\cin$-derivations of the $\cin$-ring $\scF$.

\subsection*{Acknowledgements}

I thank Yael Karshon for many many conversations about differential
spaces, $\cin$-rings and related mathematics.  I thank Casey Blacker
for a number of useful comments.\\

\noindent
This material is based upon work {partially} supported by the Air Force
Office of Scientific Research under award number FA9550-23-1-0337. 

\section{Background} \label{sec:2} In this section we develop some
of the mathematics necessary to prove
Theorem~\ref{prop:cartan_calc_level0} and Theorem~\ref{main_thm}.  We
start with some fairly well known mathematics ($\cin$-rings,
differential spaces in the sense of Sikorski and $\cin$-schemes) and proceed to
presheaves of CGAs and CDGAs over local $\cin$-ringed spaces.

To recall the definition of a $\cin$-ring we 
need to first define the category $\Euc$ of Euclidean (a.k.a.\
Cartesian) spaces.

\begin{definition}[The category $\Euc$ of Euclidean spaces]
  The objects of the category $\Euc$ are the coordinate vector spaces
  $\R^n$, $n\geq 0$,  thought of as $\cin$ manifolds.  The morphisms are
  $C^\infty$ maps.
\end{definition}
We note that all objects of $\Euc$ are finite powers of one object:
$ \R^n = (\R^1)^n$ for all $n\geq 0$.

\begin{definition}\label{def:C2}
A {\sf $C^\infty$-ring} $\cC$ is a functor
\[
\cC:\Euc \to \Set
\]
from the category $\Euc$ of Euclidean spaces to the category $\Set$ of
sets that {\em preserves finite products}.

A {\sf morphism} of $\cin$-rings from a $\cin$-ring $\cC$ to a
$\cin$-ring $\cB$ is a natural transformation $\cC\Rightarrow \cB$.
\end{definition}

\begin{remark} We unpack 
Definition~\ref{def:C2}.
\begin{enumerate}
\item For any $n\geq 0$, the set $\cC(\R^n)$ is an $n$-fold product of
  the set $\scC: = \cC(\R^1)$, i.e., $\cC(\R^n) =\scC^n$.  Moreover
  the canonical projection $\pr_i: \scC\, ^n \to \scC$
  on the $i$th factor is $\cC(\R^n\xrightarrow{x_i} \R) $, where $x_i$
  is the $i$th coordinate function.
\item For any smooth function $g= (g_1,\ldots, g_m):\R^n \to \R^m$ we
  have a map of sets
\[
\cC(g): \cC(\R^n) = \scC^n \to \scC^m = \cC(\R^m)
\]
with
\[
\pr_j \circ \cC (g) = \cC(g_j).
\]
\item For any triple of smooth functions $\R^n\xrightarrow{g} \R^m
  \xrightarrow{f} \R$ we have three maps of sets $\cC(g)$, $\cC(f)$,
  $\cC(f\circ g) $ with
\[
\cC(f) \circ \cC(g) = \cC(f\circ g).
\]
\item $\cC(\R^0)$ is a single point set $*$.  This is because by definition
  product-preserving functors take terminal objects to terminal objects.
\end{enumerate}
It follows that $\scC = \cC(\R^1)$ is a set together with, for every
$n\geq 0$, a collection of $n$-ary operations indexed by the set of smooth
functions $\cin(\R^n)$.  This is the ``definition'' of a
$\cin$-ring given on page~\pageref{page:c-ring} of the introduction.
Conversely given a set $\scC$ with a family of operations indexed by 
smooth functions there is a unique product-preserving functor $\cC:\Euc\to
\Set$ with $\cC(\R^1) = \scC$.  

Throughout the paper we will treat $\cin$-rings as sets with
operations and morphisms of $\cin$-rings as operation-preserving maps
(rather than as functors and natural transformations).  This treatment
is standard, cf.\ \cite{MR} or \cite{Joy}.
\end{remark}

\begin{remark} \label{rmrk:cin-R-alg}
Any $\cin$-ring has an underlying $\R$-algebra.  This can be
seen as follows.  The category $\Euc$ contains the subcategory $\Euc_{poly}$
with the same objects whose morphism are polynomial maps.  A
product-preserving functor from $\Euc_{poly}$ to $\Set$ is an
$\R$-algebra.  Clearly any product preserving functor $\cC:\Euc \to
\Set$ restricts to a product preserving functor from $\Euc_{poly}$ to $\Set$.

The structure of an $\R$-algebra on a $\cin$-ring $\scC$ can also be
seen on the level of sets and operations. 
Note that $\R^0 =\{0\}$, $\cin(\R^0) = \R$ and that for any set $\scC$
the zeroth power $\scC^0$ is a 1-point set $*$.  Consequently for
a  $\cin$-ring $\scC$, $n=0$, and $\lambda \in C^\infty (\R^0) = \R$
the corresponding 0-ary operation is a function $\lambda_\scC: *\to
\scC$, which we identify with an element of $\scC$.  By abuse of
notation we denote this element by $\lambda_\scC$.  This gives us a
map $\R\to \scC$, $\lambda \mapsto \lambda_\scC$.  One can show that
unless $\scC$ consists of one element (i.e., $\scC$ is the zero $\cin$-ring)
the map is injective.    Together with the operations $+:\scC^2\to
\scC$ and $\cdot: \scC^2 \to \scC$, which correspond to the functions
$h(x,y) = x+y$ and $g(x,y) = xy$, the 0-ary operations make any
$\cin$-ring into a unital $\R$-algebra.  We will not notationally
distinguish between a $\cin$-ring and the corresponding (underlying) $\R$-algebra.
\end{remark}

\begin{notation}
$\cin$-rings and their morphisms form a category that we denote by
$\cring$.
\end{notation}

\begin{example} \label{ex:2.1new}
Let $M$ be a $\cin$-manifold and $\cin(M)$ the set of
smooth (real-valued) functions. Then $\cin(M)$, equipped with the %
operations
\[
g_{\cin(M)} (a_1,\ldots, a_m) := g\circ (a_1,\ldots, a_m),
\]
(for all $m$, all $g\in \cin(\R^m)$ and all $a_1,\ldots, a_m \in \cin(M)$) is a $\cin$ ring.
\end{example}

\begin{example}
  The real line $\R$ is a $\cin$-ring:   the $m$-ary operations are given by
\[  
  g_\R(a_1,\ldots, a_m):= g(a_1,\ldots, a_m)
\]
for all $a_1,\ldots, a_m\in \R$ all $g\in \cin(\R^m)$.   Note that
this example  is consistent with Example~\ref{ex:2.1new}: if $M$ is a
point $*$, then $\cin(M) = \R$ (thought of as constant functions $*\to
\R$ and composing a function $g\in \cin(\R^m)$ with an $m$-tuple of
constant functions is the same as evaluating $g$ on the constants.
\end{example}  

\begin{example} \label{ex:2.1new+}
Let $M$ be a topological space  and $C^0(M)$ the set of
continuous real-valued  functions. Then $C^0(M)$, equipped with the %
operations
\[
g_{\cin(M)} (a_1,\ldots, a_m) := g\circ (a_1,\ldots, a_m),
\]
is also a  $\cin$ ring.
\end{example}

\begin{definition} \label{def:2.6}
We define a {\sf module} over a $\cin$-ring $\scC$ to be a module over
the $\R$-algebra underlying $\scC$ (cf.\ Remark~\ref{rmrk:cin-R-alg}).
\end{definition}  

\begin{remark}
There is a variety of reasons why Definition~\ref{def:2.6} is a
reasonable definition.  See \cite{LdR} for a discussion.
\end{remark}

\begin{example}
Any $\cin$-ring $\scC$ is a module over $\scC$.
\end{example}

\begin{example}
Let $M$ be a manifold and $E\to M$ a vector bundle. Then the set of
smooth sections $\Gamma(E,M)$ of $E$ is a module over the $\cin$-ring $\cin(M)$.
\end{example}

\begin{example} \label{ex:points-module}
Let $M$ be a manifold and $x\in M$ a point. Then the evaluation map
$ev_x: \cin(M)\to \R$ is a homomorphism of $\cin$-rings hence make
$\R$ a module over $\cin(M)$.   Note that this module structure
of $\R$ depends on the choice of a point $x$.
\end{example}

\begin{definition}[The category $\Mod$ of modules over $\cin$-rings] \label{rmrk:mod} \label{def:mod}
Modules over $\cin$-rings form a category; we denote it by $\Mod$.   The objects of
$\Mod$ are pairs $(R, M)$ where $R$ is a $\cin$-ring and $M$ is a
module over $R$.   A {\sf morphism} in $\Mod$ from $(R_1, M_1)$ to
$(R_2, M_2)$ is a pair $(f,g)$ where $f:R_1\to R_2$ is a map of
$\cin$-rings and $g:M_1\to M_2$ is a map of abelian groups so that
\[
g(r\cdot m) = f(r)\cdot g(m)
\]
for all $r\in R_1$ and $m\in M_1$.
\end{definition}

\begin{definition} \label{def:der2} Let $\scC$ be a $\cin$-ring and
$\scM$ an $\scC$-module.  A {\sf $\cin$-derivation of $\scC$ with
values in $\scM$ } is a map $X:\scC\to \scM$ so that for any $n>0$,
any $f\in \cin(\R^n)$ and any $a_1,\ldots, a_n\in \scC$
\begin{equation}\label{eq:der} X(f_\scC(a_1,\ldots,a_n)) =
\sum_{i=1}^n (\partial_i f)_\scC(a_1,\ldots, a_n)\cdot X(a_i).
 \end{equation}
\end{definition}

\begin{example}
Let $M$ be a smooth manifold. A $\cin$-derivation $X:\cin(M) \to
\cin(M)$ of the $\cin$-ring of smooth 
functions $\cin(M)$ with values in $\cin(M)$ is an ordinary vector
field.

The exterior derivative $d: \cin(M) \to \bOmega_{dR}^1(M)$ is a $\cin$
derivation of $\cin(M)$ with values in the module $\bOmega_{dR}^1(M)$ of
ordinary 1-forms.

Let $x\in M$ be a point and $\R$ a $\cin(M)$-module with the module
structure coming from the evaluation map $ev_x:\cin(M)\to \R$, cf.\
Example~\ref{ex:points-module}.  Then a derivation of $\cin(M)$ with
values in the module $\R$ is a tangent vector at the point $x\in M$.
\end{example}

\begin{notation}
The collection of all $\cin$-derivations of a $\cin$-ring $\scC$ with values
in an $\scC$-module $\scM$ is a $\scC$-module.  We denote it by
$\cDer(\scC, \scM)$.  In the case where $\scM = \scC$ we write
$\cDer(\scC)$ for $\cDer(\scC, \scC)$.
\end{notation}

\begin{remark}
A $\cin$-ring $\scC$ has an underlying $\R$-algebra structure.
Therefore given a $\scC$-module $\scM$ we may also consider the space
$\Der(\scC, \scM)$ of $\R$-algebra derivations.  It is easy to see
that any $\cin$-ring derivation of $\scC$ with values in $\scM$ is an
$\R$-algebra derivation (apply a $\cin$-ring derivation to the
operation defined by $f(x,y) = xy\in \cin(\R^2)$ to get the
product rule, for example).   In general $\Der(\scC, \scM)$ is
strictly bigger than $\cin\Der(\scC, \scM)$.  For example,  Osborn shows in \cite[Corollary to Proposition 1]{Osborn} that if $\scC = \cin(\R^n)$ and 
$\scM =\Omega^1_{\cin(\R^n), alg}$ is the module of ordinary  K\"ahler differentials (in the sense of commutative algebra), then 
the universal derivation $d: \cin(\R^n) \to \Omega^1_{\cin(\R^n), alg}$ is {\em not} a $\cin$-derivation
(it is only a $\cin$-derivation on algebraic functions).   
However when the $\cin$-ring in question is a differential structure
$\scF$ on some differential space (see Definition~\ref{def:sikorski}
below) and $\scM = \scF$, then
$\cin\Der(\scF, \scF) = \Der(\scF, \scF)$.  This follows from a
theorem of Yamashita cited above because the $\cin$-ring $\scF$ is
point-determined (see Definition~\ref{def:pd} and Example~\ref{ex:ds_pd}).
\end{remark}

\begin{lemma} \label{lem:cder-lie}
Let $\scC$ be a $\cin$-ring, $X,Y\in \cDer(\scC)$ two
$\cin$-derivations.  Then their commutator
\[
[X,Y]:= X\circ Y - Y\circ X
\]
is also a $\cin$-derivation of $\scC$.  Consequently $(\cDer(\scC),
[\cdot, \cdot])$ is a real Lie algebra.
\end{lemma}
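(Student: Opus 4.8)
The plan is to verify that $[X,Y] = X\circ Y - Y\circ X$ satisfies the defining identity \eqref{eq:der} of a $\cin$-derivation; once this is done, the bracket is visibly $\R$-bilinear and antisymmetric, and the Jacobi identity is automatic since it holds for the commutator of any pair of $\R$-linear endomorphisms, so $(\cDer(\scC),[\cdot,\cdot])$ is a real Lie algebra. So the whole content is the first sentence, and the computation is a direct expansion.

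First I would fix $n>0$, $f\in\cin(\R^n)$, and $a_1,\dots,a_n\in\scC$, and compute $Y(f_\scC(a_1,\dots,a_n))$ using \eqref{eq:der} for $Y$, obtaining $\sum_i (\partial_i f)_\scC(a_1,\dots,a_n)\cdot Y(a_i)$. Then I would apply $X$ to this expression. Here is the one subtlety: $X$ is a derivation of $\scC$ with values in $\scC$, but $X$ is only assumed to be a $\cin$-derivation, so I must know how $X$ interacts with products $g_\scC(a)\cdot Y(a_i)$ in $\scC$. Since every $\cin$-ring derivation is in particular an $\R$-algebra derivation (apply \eqref{eq:der} to $f(x,y)=xy$, as noted in the excerpt's remark), $X$ obeys the Leibniz rule for the multiplication of $\scC$: $X(p\cdot q) = X(p)\cdot q + p\cdot X(q)$. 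Moreover, applying \eqref{eq:der} to the composite $\partial_i f \in \cin(\R^n)$ gives $X\big((\partial_i f)_\scC(a_1,\dots,a_n)\big) = \sum_j (\partial_j\partial_i f)_\scC(a_1,\dots,a_n)\cdot X(a_j)$. Combining these,
\[
X\big(Y(f_\scC(a))\big) = \sum_{i,j}(\partial_j\partial_i f)_\scC(a)\cdot X(a_j)\cdot Y(a_i) \;+\; \sum_i (\partial_i f)_\scC(a)\cdot X(Y(a_i)),
\]
writing $a=(a_1,\dots,a_n)$ for brevity. By symmetry, $Y\big(X(f_\scC(a))\big)$ is the same expression with $X$ and $Y$ swapped.

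Subtracting, the double sum over $i,j$ involving second partials cancels: the term $(\partial_j\partial_i f)_\scC(a)\cdot X(a_j)\cdot Y(a_i)$ from the first line is matched by the term $(\partial_i\partial_j f)_\scC(a)\cdot Y(a_j)\cdot X(a_i)$ from the second, and these are equal because mixed partials commute — $\partial_j\partial_i f = \partial_i\partial_j f$ — and because multiplication in $\scC$ is commutative. What survives is exactly $\sum_i (\partial_i f)_\scC(a)\cdot\big(X(Y(a_i)) - Y(X(a_i))\big) = \sum_i (\partial_i f)_\scC(a)\cdot [X,Y](a_i)$, which is precisely \eqref{eq:der} for $[X,Y]$. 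Hence $[X,Y]\in\cDer(\scC)$.

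The only step that needs any care — and the one I would flag as the "main obstacle," though it is mild — is ensuring that $X$ really does satisfy the ordinary Leibniz rule and the chain rule for $\partial_i f$ as an element of $\cin(\R^n)$; both follow immediately from Definition~\ref{def:der2} (the first from the operation $f(x,y)=xy$, the second by applying \eqref{eq:der} to the smooth function $\partial_i f$). Everything else is bookkeeping: bilinearity over $\R$ and antisymmetry of $[\cdot,\cdot]$ are clear, and the Jacobi identity $[[X,Y],Z]+[[Y,Z],X]+[[Z,X],Y]=0$ holds formally for commutators of linear maps on the vector space $\scC$, so it holds in particular on $\cDer(\scC)$, which we have just shown is closed under the bracket. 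Therefore $(\cDer(\scC),[\cdot,\cdot])$ is a real Lie algebra.
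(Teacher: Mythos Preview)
Your proof is correct and follows essentially the same route as the paper: expand $X(Y(f_\scC(a)))$ and $Y(X(f_\scC(a)))$ using the $\cin$-derivation identity together with the ordinary Leibniz rule, then observe that the second-partial terms cancel because mixed partials commute. Your version is slightly more explicit in justifying the Leibniz rule and in noting that the Jacobi identity is inherited from $\End_\R(\scC)$, points the paper leaves tacit.
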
  
\begin{proof}
This is a computation.  Suppose $n>0$, $a_1,\ldots, a_n\in \scC$ and
$f\in \cin(\R^n)$.  Then 
\[
\begin{split}
  X(Y(f_\scC(a_1,\ldots, a_n)) - Y(X(f_\scC(a_1,\ldots, a_n)) =\\
  X(\sum_i(\partial_if)_\scC(a_1,\ldots, a_n) Y(a_i) ) -   Y(\sum(\partial_jf)_\scC(a_1,\ldots, a_n) X(a_j) )\\ 
 =   \sum_i X((\partial_if)_\scC(a_1,\ldots, a_n) )Y(a_i) +
 \sum_i(\partial_if)_\scC(a_1,\ldots, a_n) X( Y(a_i)) \\- \sum_jY((\partial_jf)_\scC(a_1,\ldots, a_n)) X(a_j)  -\sum_j(\partial_jf)_\scC(a_1,\ldots, a_n) Y(X(a_j) ).
\end{split}
\]  
Now
\[
\begin{split}
  \sum_iX((\partial_if)_\scC(a_1,\ldots, a_n) )Y(a_i)
  -\sum_jY((\partial_jf)_\scC(a_1,\ldots, a_n)) X(a_j) \\
  = \sum_{i,j} (\partial^2_{ji} f) _\scC  (a_1,\ldots, a_n) X(a_j) Y(a_i) -
 \sum_{i,j} (\partial^2_{ij} f) _\scC  (a_1,\ldots, a_n)Y(a_i)  X(a_j) = 0
\end{split}
\]
and the result follows.
\end{proof}

\begin{definition}
 A {\sf module of $\cin$-K\"ahler
differentials} over a $\cin$-ring $\scC$ is an $\scC$-module $\Omega^1_\scC$
together with a $\cin$-derivation $d_\scC:\scC\to
\Omega^1_\scC$, called the {\sf universal derivation}, with the following  property:  for any $\scC$-module $\scN$ and any derivation
$X:\scC\to \scN$ there exists a unique map of modules $\varphi_X:
\Omega^1_A\to \scN$ with
\[
\varphi_X \circ d = X.
 \] 
\end{definition}

\begin{remark}
The module of $\cin$-K\"ahler differentials is not be confused with the module of ordinary algebraic K\"ahler differentials, since the two modules are quite different.
For example G\'omez \cite{Gomez} shows that the cardinality of any set
of generators of the module of algebraic K\"ahler differentials $\Omega^1_{\cin(\R^n), alg}$ of $\cin(\R^n)$ is at least the cardinality of the reals.  Compare this with the fact that the module of $\cin$-K\"haler differentials $\Omega^1_{\cin(\R^n)}$ is isomorphic, as a $\cin(\R^n)$ module to the module of 
de Rham 1-forms $\bOmega^1 (\R^n)$, which is a free $\cin(\R^n)$ module of rank $n$ --- see \cite{LdR}.
\end{remark}

\begin{theorem}[Dubuc and Kock\cite{DK}] \label{thm:DK}
For any $\cin$-ring $\scC$ there exists a module of K\"ahler
differentials $\Omega_\scC^1$ over $\scC$ together with a universal $\cin$-derivation
$d_\scC:\scC\to \Omega_\scC^1$.
\end{theorem}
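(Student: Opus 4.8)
The plan is to construct $\Omega^1_\scC$ by the standard "generators and relations" device, adapted to the $\cin$-setting. First I would form the free $\scC$-module on the underlying set of $\scC$: let $F = \bigoplus_{c \in \scC} \scC \cdot \langle c\rangle$, with basis symbols $\langle c\rangle$ indexed by the elements $c \in \scC$. Then I would let $N \subseteq F$ be the $\scC$-submodule generated by all elements of the form
\[
\langle f_\scC(c_1,\ldots,c_n)\rangle - \sum_{i=1}^n (\partial_i f)_\scC(c_1,\ldots,c_n)\cdot \langle c_i\rangle
\]
for every $n \geq 0$, every $f \in \cin(\R^n)$, and every $(c_1,\ldots,c_n) \in \scC^n$. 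Define $\Omega^1_\scC := F/N$ and $d_\scC\colon \scC \to \Omega^1_\scC$ by $d_\scC(c) = \langle c\rangle + N$. By the very choice of $N$, the map $d_\scC$ satisfies the defining identity \eqref{eq:der} for a $\cin$-derivation, so $d_\scC \in \cin\Der(\scC, \Omega^1_\scC)$.

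For the universal property, given any $\scC$-module $\scN$ and any $\cin$-derivation $X\colon \scC \to \scN$, the assignment $\langle c\rangle \mapsto X(c)$ extends uniquely to an $\scC$-module homomorphism $\widetilde{X}\colon F \to \scN$ since $F$ is free on the set $\scC$. The condition that $X$ is a $\cin$-derivation is precisely the statement that $\widetilde{X}$ kills each generator of $N$, hence $\widetilde{X}$ factors through a unique $\scC$-module map $\varphi_X\colon \Omega^1_\scC \to \scN$ with $\varphi_X \circ d_\scC = X$. Uniqueness of $\varphi_X$ follows because the image of $d_\scC$ generates $\Omega^1_\scC$ as an $\scC$-module: indeed every basis symbol $\langle c\rangle$ maps to $d_\scC(c)$, and these generate $F/N$.

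There is essentially no hard obstacle here; the construction is formally identical to the classical Kähler differentials of commutative algebra, with the Leibniz-type relations replaced by the chain-rule relations of \eqref{eq:der}. The only point worth a remark is that the collection of relations is indexed by a proper class-sized family of triples $(n, f, (c_1,\ldots,c_n))$, but this causes no difficulty since $N$ is simply the $\scC$-submodule they generate inside the honest module $F$. (One should also note the degenerate case $n = 0$: a constant $\lambda \in \cin(\R^0) = \R$ gives the relation $\langle \lambda_\scC\rangle = 0$, which is consistent with $d_\scC$ being $\R$-linear and annihilating the image of $\R$.) This is precisely the argument of Dubuc and Kock \cite{DK}; I include it for completeness.
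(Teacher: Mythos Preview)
Your construction is correct and is precisely the generators-and-relations argument that the paper has in mind: the paper does not actually prove this theorem in-text but simply refers the reader to \cite{DK} and to \cite{Joy} for a shorter proof, and the remark immediately following (that $\Omega^1_\scC$ is generated by $\{d_\scC a\}_{a\in\scC}$, ``easy to see if one uses the construction of $\Omega^1_\scC$ in \cite{Joy}'') indicates that Joyce's construction is exactly the free-module-modulo-chain-rule-relations one you wrote down. One small inaccuracy: the indexing family of relations is a \emph{set}, not a proper class, since for fixed $\scC$ it is $\bigsqcup_{n\geq 0}\cin(\R^n)\times\scC^n$; but this is harmless and your argument goes through unchanged.
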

\begin{proof}
See \cite{DK} for a proof when $\scC$ is an algebra of a Fermat
theory.  A shorter proof in the case when $\scC$ is a $\cin$-ring can be found
in \cite{Joy}.    
\end{proof}

\begin{remark}
One can show that the $\scC$-module $\Omega^1_\scC $ of K\"ahler
differentials of a $\cin$-ring is generated, as a module, by the set
$\{d_\scC a\}_{a\in \scC}$.   This is easy to see if one uses the
construction of $\Omega^1_\scC$ in \cite{Joy}.
\end{remark}  
\begin{remark} \label{rmrk:5.12}
If $M$ is a manifold then the $\cin(M)$-module $\bOmega^1_{dR}(M)$ of
ordinary one-forms on $M$ is the module of K\"ahler differentials
$\Omega_{\cin(M)}^1$ and the exterior derivative $d:\cin(M)\to
\bOmega^1_{dR}(M)$ is the universal derivation.  This fact is stated in
\cite[Example~5.4]{Joy}  without proof.   In the more general case where $M$ is a
manifold with corners this fact is proved in \cite[Proposition~4.7.5]{FS}.
\end{remark}

\begin{remark} \label{rmkr:cder-as-dual}
The universal property of the derivation $d:\scC\to
\Omega^1_\scC$ implies that the map
\[
 d^*: \Hom (\Omega ^1_\scC, \scC) \to \cDer (\scC), \qquad d^*
 (\varphi) := \varphi \circ d   
\]
is a bijection.
It is easy to see that $d^*$ is a homomorphism of $\scC$-modules, hence
an isomorphism of $\scC$-modules.
\end{remark}

\subsection*{Graded objects}
\begin{remark}
There are two conventions in the literature regarding ($\N$- or $\Z$-)
graded objects in an abelian category: they can be viewed either as
direct sums or as sequences of objects. Kemps and Porter
\cite[p.~207]{KP}, for instance, use the second convention.

Depending on the category one is working in the choice of the
convention may or may not matter. For example, a $\Z$-graded real
Lie algebra $\fg$ can be viewed as a direct sum
$\fg=\oplus_{n\in \Z} \, \fg_n$ together with an $\R$-bilinear map
(the bracket) $\fg\times \fg\to \fg$ satisfying the appropriate
conditions.  It can equivalently be viewed as a sequence of vector
spaces $\{\fg_n\}_{n\in \Z}$ together with a family of linear maps
$\{\fg_n \otimes \fg_m \to \fg_{n+m}\}_{n,m\in \Z}$ (again, subject to
conditions).  So for graded
Lie algebras the two conventions are equivalent.

However, in the case of graded sheaves of modules, the two approaches are
not equivalent: see  Remark~\ref{rmrk:graded_sheaves}.
Since we will need the (pre-)sheaf versions of various graded objects,
we use the second convention throughout: a graded object is a sequence
of objects.  Since graded derivations may have negative degrees, our
graded objects are graded by the integers $\Z$.  Note that any object
graded by the natural numbers $\N$ can be viewed as a $\Z$-graded
object by placing zeros in negative degrees.
\end{remark}

The following definition is a variation on  the conventional definition of
a commutative graded algebra.   
\begin{definition} A {\sf commutative graded algebra} (CGA) over a
  $\cin$-ring $\scC$ is a sequence of $\scC$-modules $\{B^k\}_{k\in \Z}$
  together with a sequence of $\scC$-bilinear maps $\wedge: B^k\times
  B^\ell \to B^{k+\ell} $ (or, equivalently of $\scC$-linear maps   $\wedge: B^k\otimes_{\scC}
  B^\ell \to B^{k+\ell} $; we don't notationally distinguish between
  the two) so that $\wedge $ is associative and graded
  commutative. That is for all $k,\ell, m\in \Z$ and all $a\in B^k$,
  $b\in B^\ell$ and $c\in B^m$
  \begin{enumerate}
  \item $(a\wedge b) \wedge c = a\wedge (b\wedge c)$ and
  \item $b\wedge a = (-1)^{k\ell} a \wedge b$ (Koszul sign convention).
  \end{enumerate}
  We denote such a CGA by $(B^\bullet, \wedge)$. We write $b\in
  B^\bullet$ if $b\in B^k$ for some $k\in \Z$.
  We refer to elements of $B^k$ as {\em homogeneous elements of degree
    k} and write $|b| =k$ to indicate that $b\in B^k$.
\end{definition}

\begin{example} \label{ex:exterior}
Let $\scC$ be a $\cin$-ring and  $\scM$ an $\scA$-module. Then the
exterior algebra $\Lambda ^\bullet \scM =\{ \Lambda^k \scM\}_{k\in \Z}$
  together with the usual wedge product $\wedge$ 
  is a CGA over $\scC$.  Note that the exterior powers are taken over
  $\scC$. Note also that $\Lambda^0 \scM = \scA$ and that
  $\Lambda^k \scM=0$ for $k<0$ by convention.
\end{example}  
\begin{remark}
Commutative graded algebras (CGAs) are really graded-commutative algebras:
that is, they are graded and they are commutative up to the appropriate sign;
 they are not commutative on the nose.    This is why sometimes they are also 
called GCAs.
\end{remark}

\begin{definition}[The category $\CGA$ of commutative graded algebras] \label{def:catCGA}
Recall that modules over $\cin$-rings form a category $\Mod$ (Definition~\ref{def:mod}).
Consequently commutative graded algebras over $\cin$-rings form a
category as well.  We denote it by $\CGA$.  The objects of this category are pairs $(\scC,
(\scM^\bullet, \wedge))$,  where $\scC$ is a $\cin$-ring and
$(\scM^\bullet, \wedge)$ is a CGA over $\scC$.  A morphism from $(\scC_1,
(\scM_1^\bullet, \wedge))$ to $(\scC_2,
(\scM_2^\bullet, \wedge))$ in $\CGA$ is a pair $(\varphi, f
= \{f^k\}_{k\in \Z})$
where, for each $k$,  $(\varphi, f^k): (\scC_1, \scM_1^k)\to (\scC_2, \scM_2^k)$
is a morphism in the category $\Mod$ of modules and, additionally, $f$ preserves the
multiplication: $f(m\wedge m') = f(m)\wedge f(m')$.
\end{definition}

\begin{definition} \label{def:gr-der}
Let $(B^\bullet, \wedge)$ be a commutative graded
algebra over a $\cin$-ring $\scC$.  A {\sf graded derivation of degree
  $k\in \Z$} is a sequence of  $\R$-linear
maps 
$X= \{X^\ell :B^\ell \to B^{\ell+k}\}_{\ell\in \Z}$  so that for all homogeneous
elements $x,y\in B^\bullet$
\[
X(x\wedge y) = X(x) \wedge y + (-1)^{|x|k} x \wedge X(y),
\]
where $|x|$ is the degree of $x$, i.e., $x\in B^{|x|}$ (to reduced the
clutter the $X$ on the left stands for $X^{|x|+|y|}$ and so on). As
always our signs follow the Koszul sign convention.
\end{definition}

\begin{example}
Let $M$ be a smooth manifold, $\scC = \cin (M)$, the $\cin$-ring of
smooth functions.  Then the exterior derivative
$d:\bOmega_{dR}^\bullet(M)\to \bOmega_{dR}^{\bullet +1}(M)$ is a graded derivation
of degree +1 of the CGA $(\bOmega_{dR}^\bullet (M), \wedge)$ of global de
Rham differential forms.

Any vector field $v:M\to TM$ give rise to a derivation $\imath_v:
\bOmega_{dR}^\bullet(M)\to \bOmega_{dR}^{\bullet -1}(M)$ of degree -1:
it is the
contraction with $v$. The ordinary Lie derivative $\cL_v: \bOmega_{dR}^\bullet(M)\to
\bOmega_{dR}^{\bullet}(M)\,$ with respect to the vector field $v$ is a
degree 0 derivation of $(\bOmega_{dR}^{\bullet}(M), \wedge)$.

\end{example}

The following lemma will be useful when working with contractions.
\begin{lemma} \label{lem:1level0}
  Let $(\scM^\bullet, \wedge)$ be a CGA over a $\cin$-ring $\scC$ with
  $\scM^k = 0$ for all $k<0$ and with $\scM^0 = \scC$. Any degree -1
  derivation $X:\scM^\bullet \to \scM^{\bullet-1}$  is $\scC$-linear.
\end{lemma}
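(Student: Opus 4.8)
The plan is to reduce the claimed $\scC$-linearity to a single application of the defining identity for a degree $-1$ graded derivation. Since $\scM^0=\scC$, for $c\in\scC$ and a homogeneous $\omega\in\scM^\ell$ the module action $c\cdot\omega$ on $\scM^\ell$ is exactly the product $c\wedge\omega$, with $c$ regarded as an element of $\scM^0$ (the $\scC$-bilinearity of $\wedge$ together with $\scM^0=\scC$ identifies multiplication by $\scM^0$ with the module structure). So the statement ``$X$ is $\scC$-linear'' is precisely the assertion that $X(c\wedge\omega)=c\wedge X(\omega)$ for all such $c,\omega$; note that a priori Definition~\ref{def:gr-der} only furnishes $\R$-linearity.

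First I would invoke Definition~\ref{def:gr-der} with $k=\deg X=-1$, $x=c\in\scM^0$ and $y=\omega$. Because $|c|=0$ the Koszul sign is $(-1)^{|c|\cdot(-1)}=(-1)^0=1$, so
\[
X(c\wedge\omega)=X(c)\wedge\omega+c\wedge X(\omega).
\]
Next I would observe that $X(c)$ lives in $\scM^{0-1}=\scM^{-1}$, which is $0$ by the hypothesis $\scM^k=0$ for $k<0$ (only $\scM^{-1}=0$ is actually needed). Hence the first term vanishes and $X(c\wedge\omega)=c\wedge X(\omega)$, which is the desired $\scC$-linearity. Finally one passes from homogeneous $\omega$ to arbitrary elements of $\scM^\bullet$ by $\R$-linearity of $X$, every element being a finite sum of homogeneous ones.

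I do not anticipate a genuine obstacle: the argument is a two-line computation. The only point worth a sentence of justification is the identification made at the outset — that under $\scM^0=\scC$ the $\scC$-module action on each $\scM^\ell$ agrees with $\wedge$-multiplication by elements of $\scM^0$ — after which the vanishing $\scM^{-1}=0$ does all the work.
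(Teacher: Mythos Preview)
Your proposal is correct and is essentially identical to the paper's proof: both apply the graded Leibniz rule to $X(c\wedge m)$ with $c\in\scM^0$, observe the Koszul sign is trivial since $|c|=0$, and conclude by noting $X(c)\in\scM^{-1}=0$. Your additional remarks about identifying the $\scC$-action with $\wedge$-multiplication by $\scM^0$ and extending by $\R$-linearity are reasonable clarifications but do not change the argument.
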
  
\begin{proof}
For all $a\in \scC =\scM^0$ and all $m\in
  \scM^\bullet$
\[
X(a\wedge m) = X(a) \wedge m + (-1)^{0\deg (m)} a \wedge X(m) = a
\wedge X(m)
\]
since $X(a) \in \scM^{-1}= 0$.
\end{proof}

\begin{remark}
  The set of all graded derivations of degree $k$ of a CGA
  $(B^\bullet, \wedge)$ over a $\cin$-ring $\scC$ is a vector space
  over $\R$ and an $\scC$-module.  The operations are defined
  pointwise.  For example, if $X=\{X^\ell\}_{\ell \in \Z} $ is a
  derivation and $a\in \scC$, $b\in B^\bullet $ then
  $(aX) (b) := a(X (b)) $ and so on.
\end{remark}

\begin{notation} \label{not:gr-der}
We denote  by
$\Der^k(B^\bullet)$ {\sf the $\scC$-module of all graded derivations of degree $k$ of a
CGA $(B^\bullet, \wedge)$ over a $\cin$-ring $\scC$}.  We write $X:B^\bullet \to B^{\bullet+k}$
if  $X= \{X^\ell\}_{\ell \in \Z} \in \Der^k(B^\bullet)$ is a graded
derivation of degree $k$.
\end{notation}

\begin{definition}
  A commutative {\sf differential } graded algebra (CDGA) over a $\cin$-ring
  $\scC$ is a commutative graded algebra $(\scB^\bullet, \wedge)$
  together with a degree +1 derivation $d \in \Der^1(\scB^\bullet)$
  called a {\sf differential}, which squares to 0. That is, 
  $d\circ d =0$.

  We write $(\scB^\bullet, \wedge, d)$ to denote such a CDGA.
\end{definition}

Let $\Omega^1_\scC$ be the module of
  $\cin$-K\"ahler differentials of a $\cin$-ring $\scC$.  By
  Example~\ref{ex:exterior} the exterior algebra
  $\Lambda^\bullet\Omega^1_\scC := \{ \Lambda^k\Omega^1_\scC\}_{k\in \Z}$
 is a commutative graded algebra over the $\cin$-ring $\scC$.  We can
 do better:

\begin{theorem}\label{thm:6.2}
Let $\scC$ be a $\cin$-ring.  The universal differential $d_\scC:\scC
\to \Omega^1_\scC$ extends to unique degree 1 graded derivation
\[
d: \Lambda^\bullet\Omega^1_\scC \to \Lambda^{\bullet+1}\Omega^1_\scC 
\]
so that for all $k>0$ and all $a, b_1,\ldots, b_k\in \scC$
\begin{equation} \label{eq:6.d}
d(a \, d_\scC b_1\wedge  \ldots \wedge d_\scC b_k) = d_\scC a \wedge d_\scC b_1\wedge  \ldots \wedge d_\scC b_k.
\end{equation}
Consequently $d\circ d =0$.
\end{theorem}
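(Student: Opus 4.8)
My plan is to separate the easy uniqueness half from the existence half, and to get existence from a ``shifted square-zero extension'' (Weil-algebra) construction whose only non-formal input is a single ``mixed partials commute'' identity, just as in Lemma~\ref{lem:cder-lie}. Uniqueness is immediate: each component of a degree $1$ graded derivation is $\R$-linear, and --- since $\Omega^1_\scC$ is generated as a $\scC$-module by $\{d_\scC a\}_{a\in\scC}$ --- the elements $a\,d_\scC b_1\wedge\cdots\wedge d_\scC b_k$ generate $\Lambda^k\Omega^1_\scC$ as an abelian group, so \eqref{eq:6.d} together with $d|_{\Lambda^0}=d_\scC$ pins $d$ down; in particular such a $d$ does restrict to $d_\scC$.

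For existence I would introduce the graded-commutative $\R$-algebra $\widetilde B^\bullet:=\Lambda^\bullet\Omega^1_\scC\otimes_\R\R[\epsilon]$ with $\deg\epsilon=-1$ (so $\epsilon^2=0$ automatically, over $\R$): thus $\widetilde B^k=\Lambda^k\Omega^1_\scC\oplus\epsilon\,\Lambda^{k+1}\Omega^1_\scC$, with $\widetilde B^0$ the square-zero extension of the $\R$-algebra $\scC$ by the module $\Omega^1_\scC$ and $\widetilde B^1=\Omega^1_\scC\oplus\epsilon\,\Lambda^2\Omega^1_\scC$. Writing a degree $+1$ map $d\colon\Lambda^\bullet\Omega^1_\scC\to\Lambda^{\bullet+1}\Omega^1_\scC$ as $\phi(b):=b+\epsilon\,d(b)$, one checks with the Koszul signs that $\phi$ is a degree-preserving $\R$-algebra homomorphism reducing to the identity modulo $\epsilon$ exactly when $d$ is a graded derivation of degree $1$ in the sense of Definition~\ref{def:gr-der}. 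Since $\Lambda^\bullet\Omega^1_\scC$ is the free graded-commutative $\scC$-algebra generated by $\Omega^1_\scC$ in degree $1$, to produce such a $\phi$ it is enough to produce an $\R$-algebra homomorphism $\phi_0\colon\scC\to\widetilde B^0$ and a map $\phi_1\colon\Omega^1_\scC\to\widetilde B^1$ which is $\scC$-linear for the $\scC$-action on $\widetilde B^1$ obtained through $\phi_0$, both reducing to the identity modulo $\epsilon$. Take $\phi_0(c):=c+\epsilon\,d_\scC(c)$ --- an $\R$-algebra homomorphism exactly because $d_\scC$ is a derivation of $\scC$ into $\Omega^1_\scC$ --- and $\phi_1$ the unique such extension of $d_\scC b\mapsto d_\scC b$, so $\phi_1(a\,d_\scC b)=a\,d_\scC b+\epsilon\,(d_\scC a\wedge d_\scC b)$.

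The step I expect to be the crux is showing $\phi_1$ is well defined, equivalently that $c\mapsto d_\scC c\in\Omega^1_\scC\subset\widetilde B^1$ is a $\cin$-derivation of $\scC$ into $\widetilde B^1$ (with $\scC$ acting through $\phi_0$) --- which is precisely what the universal property of $(\Omega^1_\scC,d_\scC)$ needs in order to produce $\phi_1$. Writing out the derivation identity for this candidate and comparing $\epsilon$-components, everything cancels except
\[
\sum_i d_\scC\!\big((\partial_i f)_\scC(a_1,\ldots,a_n)\big)\wedge d_\scC a_i=\sum_{i,j}(\partial_j\partial_i f)_\scC(a_1,\ldots,a_n)\,d_\scC a_j\wedge d_\scC a_i=0,
\]
which holds because $\partial_j\partial_i f=\partial_i\partial_j f$ is symmetric in $(i,j)$ while $d_\scC a_j\wedge d_\scC a_i$ is antisymmetric --- the same cancellation that makes the commutator of two $\cin$-derivations a $\cin$-derivation in Lemma~\ref{lem:cder-lie}. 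Granting this, $\phi$ exists, the resulting $d$ is a degree $1$ graded derivation, and expanding $\phi(a\,d_\scC b_1\wedge\cdots\wedge d_\scC b_k)=\phi_0(a)\cdot\phi_1(d_\scC b_1)\cdots\phi_1(d_\scC b_k)$ and reading off the $\epsilon$-component gives \eqref{eq:6.d}; in particular $d|_{\Lambda^0}=d_\scC$. Finally $d\circ d=0$ is formal: the square of an odd graded derivation is a graded derivation (here of degree $2$), hence determined by its values on the algebra generators $\scC$ and $d_\scC(\scC)$ of $\Lambda^\bullet\Omega^1_\scC$; and $d(d(c))=d(d_\scC c)=0$ by \eqref{eq:6.d} with $k=1$, $a=1$ (using $d_\scC 1=0$), while $d(d(d_\scC c))=d(0)=0$.
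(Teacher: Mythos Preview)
The paper does not actually prove this theorem: its entire proof reads ``See \cite[Theorem~7.14 and Corollary~7.16]{LdR}.''  Your argument therefore supplies what the present paper omits, and it is correct.  The uniqueness half is standard.  For existence, your square-zero extension trick --- encoding a degree $+1$ derivation as a graded $\R$-algebra map into $\Lambda^\bullet\Omega^1_\scC\otimes_\R\R[\epsilon]$ with $|\epsilon|=-1$ --- cleanly reduces the construction to the universal properties of $\Omega^1_\scC$ and of the exterior algebra, and the one substantive verification (that $c\mapsto d_\scC c$ is a $\cin$-derivation into $\widetilde B^1$ with its $\phi_0$-twisted $\scC$-action) does indeed boil down to the symmetry of second partials, exactly parallel to Lemma~\ref{lem:cder-lie}.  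The closing argument for $d\circ d=0$ (that $d^2$ is a degree $2$ derivation vanishing on the algebra generators $\scC$ and $d_\scC(\scC)$) is the standard one.  Without access to \cite{LdR} one cannot compare the two proofs in detail, but your argument is self-contained and sound.
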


\begin{proof}
See \cite[Theorem~7.14 and Corollary~7.16]{LdR}.
\end{proof}
\begin{definition} \label{def:dR_for_ring}
Theorem~\ref{thm:6.2} implies that 
for any $\cin$-ring
$\scC$ we have the commutative differential graded algebra
$(\Lambda^\bullet\Omega^1_\scC , \wedge, d)$.   We call this CDGA the
{\sf $\cin$-algebraic de Rham complex } (or
a {\sf complex of $\cin$-algebraic differential forms}) of the $\cin$-ring $\scC$.
\end{definition}

Completely analogous to the category $\CGA$ of commutative graded
algebras, commutative differential graded algebras also form a
category.

\begin{definition}[The category $\CDGA$ of commutative differential
  graded algebras] \label{def:catCDGA}  The objects of the category
  $\CDGA$ of commutative differential graded algebras are  triples 
  $(\scC, (\scM^\bullet, \wedge), d)$, where $\scC$ is a $\cin$-ring,
  $(\scM^\bullet, \wedge)$ is a CGA over $\scC$ and $d:\scM^\bullet
  \to \scM^\bullet$ is a degree 1 derivation of   $(\scM^\bullet,
  \wedge)$.  Equivalently the objects of $\CDGA$ are pairs   $(\scC,
  (\scM^\bullet, \wedge, d))$, where $\scC$ is a $\cin$-ring and $
  (\scM^\bullet, \wedge, d))$ is a CDGA over $\scC$.

  A morphism from  $(\scC_1, (\scM_1^\bullet, \wedge), d)$ to
  $(\scC_2, (\scM_2^\bullet, \wedge), d))$ is a morphism $(\varphi,
  f): (\scC_1, (\scM_1^\bullet, \wedge)) \to (\scC_2, (\scM_2^\bullet,
  \wedge))$ of commutative graded algebras with the additional
  property that $f$ commutes with the differentials.  
\end{definition}

\begin{definition}[Graded Lie algebra $\Der^\bullet (\scM^\bullet)$ of
  graded derivations of 
  a commutative graded algebra  $(\scM^\bullet, \wedge)$  over a
$\cin$-ring $\scC$] \label{def:gr-Lie-graded-der-ring}\mbox{}\\
Let $(\scM^\bullet, \wedge)$ be a  CGA over a
$\cin$-ringe $\scC$.  The for each $k\in \Z$ we have the
real vector space $\Der^k (\scM^\bullet) = \Der^k (\scM^\bullet,
\wedge)$ of graded derivations of degree $k$
(Notation~\ref{not:gr-der}).
Given two derivations $X\in \Der^k (\scM^\bullet)$ and $Y\in \Der^\ell
(\scM^\bullet)$ their graded commutator $[X,Y]$ makes sense:
\begin{equation} \label{eq:gr-comm}
\left\{([X,Y])^n := X^{\ell + n} \circ Y^n - (-1)^{k\ell}Y ^{k+n} \circ X^n:
\scM^n \to \scM^{n+k+\ell}\right \}_{n\in \Z}.
\end{equation}
One checks that the graded commutator $[X,Y]$ is a derivation of
degree $|X|+|Y| = k+\ell$ and that the pair 
\[
(\Der^\bullet (\scM^\bullet), [\cdot, \cdot] ) \equiv( \{\Der^k(\scM^\bullet)\}_{k\in \Z},
[\cdot, \cdot] )
\]
is a graded Lie algebra.
\end{definition}

\begin{notation} Given two modules $\scM, \scN$ over the same $\cin$-ring
  $\scC$ we denote the collection of maps of modules from $\scM$ to
  $\scN$ by $\Hom(\scM, \scN)$.  Note that $\Hom(\scM, \scN)$ is,
  again, a module over $\scC$.
\end{notation}
  
Our next step is to define local $\cin$-ringed spaces and (graded) modules over
local $\cin$-ringed spaces.

\begin{remark} \label{rmrk:values}
There are two ways to view
sheaves on a space $M$ with values in an algebraic category $\sfC$.
We can view them as functors $\op{\Open(M)}\to \sfC$, where $\Open(M)$
is the poset of open subsets of $M$ ordered by inclusion.   Or we can
view them as ``$\sfC$-objects'' internal to the category of set-valued
sheaves on $M$.  The latter usually amounts to having a collection of
maps of set-valued sheaves subject a number of commutative diagrams.
We will go back and forth between the two points of view.
\end{remark}

In order to state and explain the definition of {\em local} $\cin$-ringed
spaces we need to recall the definition of an {\em $\R$-point } of a $\cin$-ring.
\begin{definition}
An  {\sf $\R$-point} of a $\cin$-ring $\scC$ is a nonzero homomorphism
$p:\scC\to \R$ of $\cin$-rings.
\end{definition}  

\begin{example}
Let $M$ be a manifold and $\scC = \cin(M)$ the $\cin$-ring of smooth
functions on $M$.  Then for any point $x\in M$ the evaluation map at
$x$
\[
ev_x:\cin(M)\to \R,\qquad f\mapsto f(x)
\]
is an $\R$-point of the $\cin$-ring $\cin(M)$.

Conversely, given an $\R$-point $p:\cin(M) \to \R$ there is a point $x\in M$
so that $ev_x = p$.  This fact  is known as ``Milnor's exercise.''  It is a
theorem of Pursell \cite{Pursell}.  Thus for a manifold $M$ the set of
$\R$-points of $\cin(M)$  is (in bijection with) the set of ordinary
points of $M$.
\end{example}

\begin{remark}
  The kernel of an $\R$-point $p: \scC \to \R$ of a $\cin$-ring $\scC$
  is a maximal ideal in (the algebra underlying) $\scC$.  The converse
  is false:  there is a $\cin$-ring $\scC$ with a maximal ideal
  $\mathfrak{M}$ so that the field $\scC/\mathfrak{M}$ is not
  isomorphic to $\R$.   These kinds of maximal ideals do not seem to
  be useful in algebraic geometry over $\cin$-rings --- one uses
  $\R$-points exclusively.  
  \end{remark}

We now formally state the definition of a local $\cin$-ringed briefly
mentioned in the introduction.

\begin{definition}
A $\cin$-ring is {\sf local} if it has exactly one $\R$-point.
\end{definition}  

\begin{example} \label{ex:2.36}
Let $M$ be a manifold.  The stalk  $\cin_{M,x}$of the sheaf $\cin_M$ of smooth
functions on $M$ at a point $x$ consists of germs of functions at
$x$.   It is a $\cin$-ring \cite{MR} and it is a local $\cin$-ring:
the unique $\R$-point  $p:\cin_{M,x}\to \R$ is the evaluation at the point $x$.  See also Lemma~\ref{lem:2.58} below, which proves a more general result.
\end{example}  

\begin{remark}
There are $\cin$-rings with no $\R$-points.   The simplest such ring is the
zero ring (which corresponds to the product preserving functor $\Euc\to
\Set$ that sends every object of $\Euc$ to the same 1 point set $*$).

There are also {\em nonzero} $\cin$-rings with no $\R$-points.  To give an
example we need to recall that if $\scC$ is a $\cin$-ring and
$I\subset \scC$ is the ideal in the $\R$-algebra underlying $\scC$,
then the quotient algebra $\scC/I$ is a $\cin$-ring.  See \cite{MR}.
Granted this fact, let $\scC = \cin(\R^n)$, $n>0$ and $ \cin_c (\R^n)$
the subset of compactly supported functions.  Then $\cin_c (\R^n)$ is
an ideal in $\cin(\R^n)$ and the quotient ring
$\cin(\R^n)/\cin_c (\R^n)$ has no $\R$-points.  The latter fact essentially
follows from Pursell's theorem, which was mentioned above.
\end{remark}

\begin{definition} \label{def:lcrs}
A {\sf local $\cin$-ringed space}
($\LCRS$) is a pair $(M,\scA)$ where $M$ is a topological space and
$\scA$ is a sheaf of $\cin$-rings with the additional property that
the stalks of the sheaf are local $\cin$-rings.   We will refer to the
sheaf $\scA$ as the {\sf structure sheaf} of an $\LCRS$ $(M,\scA)$.
\end{definition}

\begin{example}
It follows from Example~\ref{ex:2.36} that a manifold $M$ together
with the sheaf $\cin_M$ of smooth functions is a local $\cin$-ringed
space.  
\end{example}

If manifolds were the only examples of local $\cin$-ringed spaces,
this paper would not have much of a point.   Fortunately there are two
broad classes of examples: affine $\cin$-schemes of Dubuc and
differential spaces in the sense of Sikorski.  We next recall what
they are.   We start with differential spaces.  However, before we do
that, we recall the definition of a point-determined $\cin$-ring.  We
will need this later when we discuss derivations.

\begin{definition} \label{def:pd}
A $\cin$-ring $\scC$ is {\sf point-determined} if $\R$-points separate
elements of $\scC$.  That is, if $a\in\scC$ and $a\not = 0$ then
there is an $\R$-point $p:\scC \to \R$ so that $p(a)\not = 0$.
\end{definition}

\begin{definition}[Differential space in the sense of Sikorski]\label{def:sikorski}
 A {\sf differential space} is a pair $(M, \scF)$ where $M$ is a
 topological space and $\scF$ is a (nonempty) set of real valued functions on $M$ so that
 \begin{enumerate}
   \item The topology on $M$ is the smallest topology making all
     functions in $\scF$ continuous;
 \item For any natural number $k$, for any $a_1,\ldots, a_k$ and any
   smooth function $f\in \cin(\R^k)$ the composite $f\circ
   (a_1,\ldots, a_k)$ is in $\scF$; \label{closure}\label{def:sikorksi:it2}
\item If $g: M\to \R$ is a function with the property that  for any $x\in M$ there is an
  open neighborhood $U$ of $x$ and $f\in \scF$ ($f$ depends on $x$)
  so that $g|_U = f|_U$ then $g\in \scF$. \label{def:sikorksi:it3}
\end{enumerate}
The  whole set $\scF$ is called a  {\sf differential structure} on $M$.
\end{definition}

\begin{remark}\mbox{}
\noindent
  \begin{itemize}
 \item
One thinks of the family of functions $\scF$ on a differential
space $(M, \scF)$ as the set of all abstract ``smooth'' functions on
$M$. 
\item Condition (\ref{def:sikorksi:it2}) of
  Definition~\ref{def:sikorski} says that the smooth structure $\scF$ is a
  $\cin$-ring: for any $n>0$,
$a_1, \ldots, a_n\in \scF$ and $f\in \cin(\R^n)$ the composite
$f\circ (a_1,\ldots, a_n):M\to \R$ is again in $\scF$ by
(\ref{def:sikorksi:it2}).  Thus setting 
\[
f_{\cin(M)} (a_1,\ldots, a_n) := f\circ (a_1,\ldots, a_n)
\]
for $f\in \cin(\R^n)$ and  $a_1, \ldots, a_n\in
\scF$) makes $\scF$ into a $\cin$-ring. 

\item Condition  (\ref{def:sikorksi:it3}) of
  Definition~\ref{def:sikorski} is a condition on the sheaf of
  functions on $M$ generated by $\scF$.  See Remark~\ref{rmrk:7.2}.
  It is often phrased as: ``if a function $g:M\to \R$ is locally smooth then it is smooth.''
\end{itemize}
\end{remark}

\begin{example} \label{ex:mfld-diffspace} Let $M$ be a smooth manifold.
  Then the pair $(M, \cin(M))$, where $\cin(M)$ is the set of
  $\cin$ functions, is a differential space.

  The only hard thing to check is that
  the topology on $M$ is the smallest among the topologies for which
  smooth functions are continuous.  But this follows from a theorem of
  Whitney: any closed subset of $M$ is the zero set of some smooth
  function (all our manifolds are Hausdorff and second
  countable).  See, for example, \cite[Theorem~2.29]{Lee}.  Hence if
  $U\subset M$ is open then there is a smooth function $f:M\to \R$ so
  that $U =\{x\in M \mid f(x) \not = 0\}$.
\end{example}

\begin{example}
  Let $M$ be a smooth manifold. Then the pair $(M, C^0(M))$, where
  $C^0(M)$ is the set of continuous functions, is a differential
  space.
\end{example}

\begin{example} \label{ex:ds_pd}
  If $(M, \scF)$ is a differential space then the $\cin$-ring $\scF$ is
  point-determined. This is because   a function on $M$ is non-zero only if there is a
  point at which it takes a non-zero value.
\end{example}

Every differential space is (secretly?) a local $\cin$-ringed space.
Here are the details.

\begin{definition}[The structure sheaf $\scF_M$ of a differential space
  $(M, \scF)$] \label{def:str_sheaf}
Let  $(M, \scF)$ be a differential space.  Define a presheaf $\cP$
of $\cin$-rings on $M$ as follows: for an open set $U\subset M$ let 
\[
  \cP(U): = \{ f|_U \mid f\in \scF\}
\]
and let the structure maps of $\cP$  be the restrictions of functions.
Define the {\sf structure sheaf} $\scF_M$ of  the differential space $(M, \scF)$ to be the
sheafification of $\cP$.
\end{definition}
The structure sheaf $\scF_M$ has a concrete description: 

\begin{lemma} \label{lem:7.1} Let  $(M, \scF)$ be a differential
  space,  $\cP$ the presheaf on $M$ defined by 
 \[
   \cP(U): = \{ f|_U \mid f\in \scF\}
 \]
 as in Definition~\ref{def:str_sheaf}. 
  For an open subset
$U\subseteq M$ let 
\[
\begin{split}  
  \scF_M(U) :=& \{f:U\to \R\mid \textrm{ for any }x\in U \textrm{ there is
    an open neighborhood } V \\
  &\textrm{ of }x \textrm{ in }U
  \textrm{ and } g\in \cin(M) \textrm{ with } g|_V = f|_V\}.
\end{split} 
\]  
Define the restriction maps $\rho^U_V: \scF_M(U) \to \scF_M(V)$ to be the restrictions of functions: $\rho^U_V (h):= h_V$.
 Then $\scF_M :=\{\scF_M(U)\}_{U\in \Open(M)}$ is a sheaf.  Moreover $\scF_M$ is a sheafification of the presheaf $\cP$. 
\end{lemma}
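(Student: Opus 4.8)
The plan is to verify the three sheaf axioms for the candidate $\scF_M$ and then identify it with the sheafification of $\cP$. First I would check that $\scF_M$ is a presheaf: this is immediate, since the defining condition ``$f$ agrees locally with a restriction of some $g\in\cin(M)$'' (using the paper's notation $\cin(M)$ for the global differential structure $\scF$, as the lemma does) is manifestly local and hence stable under restriction to a smaller open set, so the restriction maps $\rho^U_V$ are well-defined. Next, for the separation (monopresheaf) axiom: if $f,f'\in\scF_M(U)$ and $\{U_i\}$ is an open cover of $U$ with $f|_{U_i}=f'|_{U_i}$ for all $i$, then $f=f'$ as functions on $U$, since functions are determined by their values pointwise. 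For gluing: given a cover $\{U_i\}$ of $U$ and sections $f_i\in\scF_M(U_i)$ that agree on overlaps, define $f:U\to\R$ by $f(x):=f_i(x)$ for $x\in U_i$; this is well-defined by the overlap condition, and for any $x\in U$, choosing $i$ with $x\in U_i$ and then (by definition of $\scF_M(U_i)$) an open $V\subseteq U_i$ with $x\in V$ and some $g\in\scF$ satisfying $g|_V=f_i|_V=f|_V$, we see $f\in\scF_M(U)$. So $\scF_M$ is a sheaf.

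It remains to show $\scF_M$ is the sheafification of $\cP$, i.e.\ that there is a morphism $\cP\to\scF_M$ inducing isomorphisms on all stalks (equivalently, satisfying the universal property among maps from $\cP$ into sheaves). The natural map $\eta_U:\cP(U)\to\scF_M(U)$ is the inclusion: an element $f|_U$ of $\cP(U)$ is the restriction of $f\in\scF=\cin(M)$, hence lies in $\scF_M(U)$ (take $V=U$, $g=f$). This is clearly a morphism of presheaves of $\cin$-rings. To see it exhibits $\scF_M$ as the sheafification, I would check that $\eta$ is a \emph{local isomorphism}: (a) it is \emph{locally surjective} — given $f\in\scF_M(U)$ and $x\in U$, by definition there is an open $V\ni x$ and $g\in\scF$ with $g|_V=f|_V$, and $g|_V\in\cP(V)$ maps to $f|_V$; (b) it is \emph{locally injective} — if $f|_U,f'|_U\in\cP(U)$ have the same image in $\scF_M(U)$, they are literally equal as functions on $U$, so already injective before localizing. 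A presheaf morphism into a sheaf that is a local isomorphism is precisely a sheafification; alternatively, one observes both $\cP$ and $\scF_M$ have the same stalk at each $x$ (the stalk of $\cP$ at $x$ is $\{$germs at $x$ of functions in $\scF\}$, and the local-surjectivity/injectivity just shown says $\scF_M$ has the same stalk), and $\scF_M$ is a sheaf, which characterizes it as the sheafification of $\cP$ up to unique isomorphism.

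The bulk of this is routine; the only place demanding a little care is confirming that the defining property of $\scF_M(U)$ is exactly ``locally in $\cP$'' and not something weaker or stronger — in particular that the local model $g$ is allowed to be any element of the \emph{global} structure $\scF=\cin(M)$ (rather than a section of $\cP$ over a possibly smaller set), which is what makes local surjectivity of $\eta$ work on the nose. I expect the main (very minor) obstacle to be bookkeeping around this identification and the notational overload of $\cin(M)$ for $\scF$; there is no serious mathematical difficulty, as the construction is the standard ``espace étalé''-style description of sheafification specialized to a presheaf of function algebras, and condition~(\ref{def:sikorksi:it3}) of Definition~\ref{def:sikorski} is precisely the statement that on $M$ itself the global sections recover $\scF$, i.e.\ $\scF_M(M)=\scF$.
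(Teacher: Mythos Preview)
Your proposal is correct and follows essentially the same route as the paper: verify that $\scF_M$ is a sheaf, note the inclusion $\cP\hookrightarrow\scF_M$, and argue that this inclusion induces isomorphisms on stalks (the paper phrases this last step simply as ``$\scF_M$ and $\cP$ have the same stalks''). Your write-up is in fact more detailed than the paper's, which dispatches the sheaf axioms and the stalk comparison with ``this is not hard.''
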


\begin{proof}[Proof of Lemma~\ref{lem:7.1}]
It is easy to see that the restriction maps $\rho^U_V: \scF_M(U) \to \scF_M(V)$ are well-defined. Consequently $\scF_M$ is a presheaf.  Moreover  $\cP(U) \subset
\scF_M (U)$ for all opens $U\in \Open(M)$.  Furthermore the presheaves $\scF_M$ and $\cP$ have the same
stalks.  So to prove that $\scF_M$ is a sheafification of $\cP$ it is enough to
check that $\scF_M$ is a sheaf.  This is not hard.
Furthermore  is not difficult to  check that $\scF_M$ is a sheaf of
$\cin$-rings.
\end{proof}

\begin{remark} \label{rmrk:7.2}
We now see what condition~(\ref{def:sikorksi:it3}) of
  Definition~\ref{def:sikorski} is saying.  It says that  the $\cin$-ring of functions
  $\scF$ on a differential space $(M, \scF)$ is the $\cin$-ring of
  global sections of the sheaf $\scF_M$ induced on $M$ by $\scF$
  and that  we don't get any more global sections.
\end{remark}

\begin{remark} \label{rmrk:2.57}
Note that for an open subset $U$ of $M$ a function $f$ is in
$\scF_M(U)$ if and only if there is an open cover $
\{U_\alpha\}_{\alpha\in A} $ of $U$ and a set $\{g_\alpha\}_{\alpha
  \in A}\subset \scF$ so that $f|_{U_\alpha} = g_\alpha|_{U_\alpha}$
for all $\alpha$.
\end{remark}  
\begin{lemma} \label{lem:2.58}
Let $(M, \scF)$ be a differential space and $\scF_M$ the induced sheaf
of $\cin$-rings.  Then the stalks of $\scF_M$ are local, hence the
pair $(M,
\scF_M)$ is a local $\cin$-ringed space.
\end{lemma}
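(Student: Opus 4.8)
The plan is to compute the stalk $(\scF_M)_x$ at a point $x \in M$ explicitly and exhibit its unique $\R$-point. First I would observe that, since $\scF_M$ is a sheaf of $\cin$-rings, the stalk $(\scF_M)_x = \colim_{x\in U}\scF_M(U)$ is itself a $\cin$-ring (filtered colimits of $\cin$-rings are $\cin$-rings; alternatively this can be read off the functor-of-points description). By Lemma~\ref{lem:7.1} the presheaf $\cP$ (given by $\cP(U)=\{f|_U\mid f\in\scF\}$) has the same stalks as $\scF_M$, so $(\scF_M)_x$ is the ring of germs at $x$ of functions in $\scF$. Thus a germ in $(\scF_M)_x$ is represented by some $f\in\scF$ (or by a locally-$\scF$ function on some neighborhood, by Remark~\ref{rmrk:2.57}), and two such represent the same germ iff they agree on a neighborhood of $x$.

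Next I would define the candidate $\R$-point $p\colon (\scF_M)_x \to \R$ by evaluation at $x$: if a germ is represented by $f$ (a genuine real-valued function defined near $x$), set $p([f]) := f(x)$. This is well-defined because germ-equal functions agree at $x$, and it is a homomorphism of $\cin$-rings because the operations on $(\scF_M)_x$ are induced from composition with smooth functions: $p\big(g_{(\scF_M)_x}([f_1],\dots,[f_n])\big) = (g\circ(f_1,\dots,f_n))(x) = g(f_1(x),\dots,f_n(x)) = g_\R(p([f_1]),\dots,p([f_n]))$. It is nonzero since $p([1]) = 1$. Hence $p$ is an $\R$-point.

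For uniqueness — which I expect to be the crux of the argument — suppose $q\colon (\scF_M)_x \to \R$ is any $\R$-point. I would show $q = p$ by a standard separation argument. Given a germ $[f]$ with $f(x) = c$, consider the germ $[f - c\cdot 1]$, which lies in $\ker p$; I must show it lies in $\ker q$. The key point is that $\ker p$ is generated (as an ideal, or at least up to the $\cin$-ring structure) by germs of functions vanishing at $x$, and any such germ admits a "square root trick" / Hadamard-type factorization: if $h\in\scF$ vanishes at $x$ then near $x$ one can write $h = \mathrm{sgn}(h)\cdot\sqrt{|h|}\cdot\sqrt{|h|}$, and since $\scF$ is closed under composition with smooth functions one can arrange this as an honest product of two elements of $(\scF_M)_x$ each vanishing at $x$ — more carefully, one uses that $|h|^{1/2}$ near $x$ is (the germ of) a function obtained by composing $h$ with a smooth function that is $C^\infty$ once we cut off away from a neighborhood where $h$ is bounded. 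Then $q([h]) = q([h_1])q([h_2])$ with $q([h_i])$ forced to be $0$ on the nose once one shows $q$ kills germs vanishing at $x$; concretely one shows $q([h])^2 \le$ (something controlled) or uses that a ring homomorphism to $\R$ sending $h = u^2 - v^2$ style decompositions must vanish. The cleanest route is: $\ker p$ is a maximal ideal (kernel of a surjection onto the field $\R$), $\ker q$ is a proper ideal containing... no — rather, I would invoke that $(\scF_M)_x$ is point-determined-like at $x$ because $\scF$ is point-determined (Example~\ref{ex:ds_pd}) and the topology is $\scF$-initial (condition (1) of Definition~\ref{def:sikorski}): every germ vanishing at $x$ becomes, after multiplying by a germ of a bump function supported near $x$, a global element of $\scF$ vanishing on a neighborhood of $x$, and one shows directly that $q$ annihilates germs of functions vanishing near $x$ and then, via the Hadamard factorization $h = \sum \partial_i h(x)\,(\text{id}_i - x_i) + (\text{higher order, a sum of products of germs vanishing at }x)$ applied through the $\cin$-ring structure, that $q$ annihilates all of $\ker p$. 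Since $\ker p$ is maximal and $\ker q \supseteq \ker p$ with $q\ne 0$, we get $\ker q = \ker p$, and as both $p,q$ induce the identity on $\R\cdot 1$, we conclude $q = p$. Therefore $(\scF_M)_x$ has exactly one $\R$-point, i.e.\ it is local, and $(M,\scF_M)$ is a local $\cin$-ringed space.
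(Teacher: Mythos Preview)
Your setup is fine: the stalk is the ring of germs of locally-$\scF$ functions at $x$, and evaluation $p([f])=f(x)$ is a well-defined $\R$-point. The gap is in the uniqueness argument. The square-root trick you reach for does not work in the smooth setting: $t\mapsto\sqrt{|t|}$ is not $C^\infty$ at $0$, so for a germ $[h]$ with $h(x)=0$ there is in general no way to produce $\sqrt{|h|}$ as an element of the stalk (the cutoff you describe does not help near the zero of $h$). The Hadamard-style expansion you sketch next presupposes coordinate functions $x_i$ on $M$, which an arbitrary differential space need not have. So as written you have not shown $\ker q\supseteq\ker p$.

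The paper's argument is the one you briefly wrote down and then talked yourself out of, and it bypasses all of this. The key observation is that any germ $[f]$ with $f(x)\neq 0$ is a \emph{unit} in $(\scF_M)_x$: since $f$ is continuous, $f$ is nonvanishing on some neighborhood $V$ of $x$; choose $h\in\cin(\R)$ with $h(t)=1/t$ for $t$ near $f(x)$, and then $h\circ f\in\scF_M(V)$ (because $\scF_M(V)$ is a $\cin$-ring) satisfies $(h\circ f)\cdot f=1$ near $x$. Thus every element outside $\ker p$ is invertible, so $\ker p$ is the unique maximal ideal of the stalk. Now if $q$ is any $\R$-point, $\ker q$ is a proper (hence maximal) ideal, and it must be contained in the set of non-units, i.e.\ in $\ker p$; therefore $\ker q=\ker p$. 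Since both $p$ and $q$ send constants to themselves, $q=p$. Note the inclusion goes the opposite way from what you wrote: invertibility gives $\ker q\subseteq\ker p$, not $\supseteq$.
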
  
\begin{proof}
  For a point $x\in M$ the stalk $\cP_x$ is the set
germs of functions at $x$.  There is an evident $\R$-point
$ev_x:\cP_x\to \R$, $ev_x ([f]) = f(x)$.  If $f(x) \not =0$ then the
germ $[f]$ is invertible in the stalk $\cP_x$.  Hence $\ker ev_x$ is a unique maximal ideal
in $\cP_x$ and therefore $\cP_x$ is a local $\cin$-ring.
\end{proof}

\begin{remark} \label{rmrk:2.61}
Let $(M, \scF)$ be a differential space and $Y\subset M$ a subset.
Then there is a unique differential space structure $\scF_Y$ on $Y$: it
is the smallest differential structure on $Y$ containing the set
$\{f|_Y\mid f\in \scF\}$ of restrictions of function in $\scF$ to
$Y$.  See \cite{Sn}.

If $M$ is a manifold and $Y\subset M$ is closed then one can show that
$\scF_Y$ is
simply the set of restrictions $\scF|_Y$.  Therefore any closed subset
of a manifold $M$ gives rise to an easily understandable differential
space.  These differential spaces can be pretty wild.  For instance
the standard Cantor set $C$ in $\R$ with its induced differential
structure $\cin(C)\equiv\cin(\R)|_C$
is a differential space.
\end{remark}

Differential spaces form a category that we will denote by $\DiffSp$.
The morphisms of this category are smooth maps that are defined as
follows.

\begin{definition}  A  {\sf smooth map} from a
  differential space  $(M,\scF)$  to a differential space $(N,
  \scG)$ is a function 
  $\varphi:M\to N$ so that for
  any $f\in \scG$ the composite $f\circ \varphi$ is in $\scF$.
\end{definition}

We have observed above that to every differential space $(M,\scF)$ one
can associate a local $\cin$-ringed space $(M, \scF_M)$ (see
Lemma~\ref{lem:2.58}).  This map can be upgraded to a fully faithful
functor $F:\DiffSp \to \LCRS$.  Thus the category of differential
spaces is (isomorphic to) a full subcategory of the category $\LCRS$
of local $\cin$-ringed spaces.  See \cite{LdR} for a proof of this claim.

We next discuss $\cin$-schemes.  The idea of introducing an analogue
of algebraic geometry over
$\cin$-rings is due to Dubuc
\cite{Dubuc}.  It  has been developed further by Joyce \cite{Joy}.  Local
$\cin$-ringed spaces form a category.  Following Joyce \cite{Joy} we
denote this category by $\LCRS$.  For completeness we recall the
definition of morphisms in $\LCRS$.  Note first that any morphism of
local $\cin$-rings automatically preserves their $\R$-points.
(see \cite{MR} or \cite{Joy}).

\begin{definition}
A {\sf morphism} of local $\cin$-ringed spaces from $(X,\scA_X)$ to
$(Y,\scA_Y)$ is a pair $(f, f_\#)$ where $f: X\to Y$ is continuous and
$f_\#: \scA_Y\to f_* \scA_X$ is a map of sheaves of $\cin$-rings on the space $Y$.
\end{definition}

There is an evident global sections functor $\Gamma: \LCRS \to \op{\cring}$
from the category of local $\cin$-ringed spaces to the category opposite  of the category
$\cin$-rings.  It is given  by 
\[
\Gamma\left( (N, \scB)) \xrightarrow{(f,f_\#)} (M,\scA) \right) := 
\left(\scA(M)\xrightarrow{f_\#} (f_*\scB) (M) = \scB(f\inv (M)) = \scB (N)\right).
\]
Thanks to a
theorem of Dubuc \cite{Dubuc}, the global section functor $\Gamma$ has
a right adjoint
\[
  \Spec: \op{\cring} \to \LCRS ,
\]
the {\sf spectrum functor}.  One way to construct $\Spec$ is to proceed in
the same way as in algebraic geometry.  Namely, given a $\cin$-ring
$\scC$ one considers the set $\cX_\scC$ of all $\R$-points of $\scC$.
Then $\cX_\scC$ is given a topology and finally one constructs a sheaf
$\scO_\scC$ of local $\cin$-rings on the space $\cX_\scC$.  See
\cite{Joy}  for details.

\begin{definition} An affine {\sf $\cin$-scheme} is a local
  $\cin$-ringed space isomorphic to $\Spec(\scA)$ for some $\cin$-ring
  $\scA$.

The {\sf category of affine $\cin$-schemes} $\Aff$ is the essential
image of the functor $\Spec$.
\end{definition}

\begin{remark}
For any manifold $M$ the local $\cin$-ringed space $(M,\cin_M)$ is isomorphic to the affine scheme $\Spec(\cin(M))$ (see \cite{Dubuc} and/or \cite{Joy}).
In particular the real and complex projective spaces $\R P^n$ and $\C P^n$ are affine $\cin$-schemes.  Consequently 
 there is not the same motivation for considering projective $\cin$-schemes as there is for considering projective schemes.  
\end{remark}

\begin{remark}
  Unlike the spectrum functor in algebraic geometry
  $\Spec: \op{\cring} \to \Aff$ is not an equivalence of categories.
  For example $\Spec$ assigns the empty space to any $\cin$-ring with
 no $\R$-points, such as
  $\cin(\R^n)/\cin_c (\R^n)$ (here $\cin_c(\R^n)$ is the ideal of
  compactly supported functions) or the zero $\cin$-ring 0.
\end{remark}

\begin{remark}  The relation between $\cin$-schemes and differential
  spaces is a bit messy.   For a $\cin$-schemes to be a differential
  space its structure sheaf must be a sheaf of functions.  In
  particular it can have no nilpotents.  As as result
  many (most?) $\cin$-schemes are not differential spaces.

  On the other hand, given a differential space $(M,\scF)$ any point
  $x\in M$ gives rise to the $\R$-point $ev_x:\scF\to \R$, the
  evaluation at $x$.  However there are differential spaces where the
  set of $\R$-points is bigger than their  set of (ordinary) points,
  see \cite{CSt}.  Consequently there are differential spaces whose
  corresponding $\cin$-ringed spaces are not affine $\cin$-schemes.
\end{remark} 

We now turn our attention to $\Z$-graded presheaves and sheaves.
\begin{definition} \label{def:graded_ob}
  A {\sf $\Z$-graded  presheaf $\scP^\bullet$} of
$\scA$-modules over a local $\cin$-ringed space
$(M,\scA)$ is a sequence $\{\scP^n\}_{n\in \Z}$ of presheaves of
$\scA$-modules.  Similarly, a 
{\sf $\Z$-graded sheaf $\scS^\bullet $} of
$\scA$-modules over a local $\cin$-ringed 
$(M,\scA)$ is a sequence $\{\scS^n\}_{n\in \Z}$ of sheaves of
$\scA$-modules.
\end{definition}

\begin{remark} \label{rmrk:graded_sheaves}
Since the category of modules over a $\cin$-ring has coproducts so
does the category of presheaves of modules over a fixed local
$\cin$-ringed space $(M, \scA)$.  Moreover coproducts of presheaves 
are computed object-wise.  Consequently given a sequence 
$\{\scS^n\}_{n\in \Z}$ of presheaves of $\scA$-modules their coproduct
$ \oplus_{n\in \Z} \scS_n$ is defined by 
\[
\left( \oplus_{n\in \Z} \scS^n \right) (U)= \oplus_{n\in \Z} \scS^n(U),
\]
for all open sets $U\subset M$.  Here on the right the direct sum is
taken in the category of $\scA(U)$-modules.  Since the sheafification
functor $\sh:\Psh_M\to \Sh_M$ from presheaves to sheaves is left
adjoint to inclusion functor $i:\Sh_M\hookrightarrow \Psh_M$, the
functor $\sh$ preserves colimits and, in particular, coproducts.  Therefore if
$\{\scS^n\}_{n\in \Z}$ is a sequence of sheaves, then their coproduct (``direct sum'')
in $\Sh_M$ exists and equals $\sh (\oplus_{n\in \Z} \scS^n )$.  In
particular for an open set $U\subset M$, the $\scA(U)$-module
$\sh (\oplus_{n\in \Z} \scS^n ) (U)$ is, in general, different from
$\oplus_{n\in \Z} \scS^n (U)$. See Example~\ref{ex:pre-direct_sum}
below.  Of course $\sh (\oplus_{n\in \Z} \scS^n )$ (with the
appropriate structure maps) {\em is} a coproduct in $\Sh_M$, so one
can view $\Z$-graded objects in $\Sh_M$ as $\Z$-indexed coproducts.
One can even make sense of a map
$f: \sh (\oplus_{n\in \Z} \scS^n ) \to \sh (\oplus_{n\in \Z} \scS'^n
)$ as having a  degree $k$: we require that there are maps
$f^m:\scS^m\to {\scS'}^{m+k}$ of sheaves of modules making the diagrams
\[
\xy
(-15,10)*+{\sh (\oplus_{n\in \Z} \scS^n ) }="1";
(15,10)*+{\sh (\oplus_{n\in \Z} {\scS'}^n) }="2";
 (-15,-5)*+{\scS^m}="3";
(15,-5)*+{{\scS'}^{m+k}}="4";
{\ar@{->}^{f} "1";"2"};
{\ar@{->}^{\imath^m} "3";"1"};
{\ar@{->}_{{\imath'}^{m+k}} "4";"2"};
{\ar@{->}^{f^m} "3";"4"};
\endxy
\]
commute for all $m$ (here $\{\imath^m:\scS^m\to \sh (\oplus_{n\in \Z}
\scS^n ) \}_{m\in\Z}$  and $\{{\imath'}^m:{\scS'}^m\to \sh (\oplus_{n\in \Z}
{\scS'}^n ) \}_{m\in\Z}$ are the structure maps of the coproducts).
However it is not clear what the value of
the sheaf $\sh (\oplus_{n\in \Z} \scS^n )$ on some open subset
$U\subset M$ actually is in concrete situations.  This is why we use
Definition~\ref{def:graded_ob}.
\end{remark}

\begin{example}\label{ex:pre-direct_sum}  We  construct an example
  of a $\Z$-indexed collection of sheaves on a manifold so that their
  direct sum (as a presheaf) is not a sheaf.

  Let $B^n$ denote the open $n$-ball in $\R^n$ of radius 1 centered at
  0.  Let $M= \bigsqcup_{n\geq 0} B^n$, the coproduct of the balls $B^n$'s.  The
  space $M$ is a second countable Hausdorff manifold of unbounded
  dimension.  Let $\bOmega^k_{dR,M}$ denote the sheaf of ordinary
  differential $k$-forms on $M$ (with $\bOmega^k_{dR,M} \equiv 0$ for $k<0$).
Form the  direct sum (in the category of presheaves)
\[
\bOmega^\bullet_{dR,M}:= \oplus _{k\in \Z}\bOmega^k_{dR,M}.
\]
This presheaf {\it is not a sheaf}.  Here is one way to see it: for
each $n$ pick a volume form $\alpha_n\in \bOmega_{dR}^n (B^n)$.  The
collection of open balls $\{B^n\}_{n\geq 0}$ is an open cover of $M$
and all the elements of the cover are mutually disjoint.  If
$\oplus _{k\in \Z }\bOmega^k_{dR,M}$ were a sheaf, we would have an
element $\alpha\in \oplus _{k\geq 0 }\bOmega^k_{dR,M}$ so that
$\alpha|_{B^n} = \alpha_n$ for all $n$.  But any element of the direct
sum has a bounded degree, so no such $\alpha$ can exist.  Therefore
the direct sum $\oplus _{k\geq 0 }\bOmega^k_{dR,M}$ of sheaves is not
a sheaf.
\end{example}  
\begin{notation} \label{not:5.3}
Given a presheaf $\cP$ over a topological space $M$ we denote the
restriction maps from $\cP(U)$ to $\cP(V)$ for any two open sets
$V, U\in \Open(M)$ with $V\subset U$ by $\rho^U_V$ (with no reference
to the presheaf $\cP$ in the notation).  Thus
\[
\rho^U_V: \cP(U)\to \cP(V).
\]
Similarly if $(M, \scA)$ is a local $\cin$-ringed space and $\scM$ is
an $\scA$-module we write
\[
\rho^U_V: \scM(U)\to \scM(V)
\]
for the restrictions maps.   If we want to emphasize the dependence on
$\scM$ we write
\[
^\scM\!\rho^U_V: \cP(U)\to \cP(V)
\]
for the same restriction map.
\end{notation}  

\begin{notation}[$\Hom_{\scA}(\scN, \scM)$] \label{not:2.18}
Let $\scN, \scM$ be two $\scA$-modules over a (local) $\cin$-ringed space
$(M,\scA)$.    Recall that a map of $\scA$-modules $f:\scN \to \scM$
is a collection of maps $\{f_U:\scN(U)\to \scM(U)\}_{U\in\Open(M)}$
indexed by the poset $\Open(M)$ of opens subsets of $M$, where each $f_U$ is  a map of
$\scA(U)$-modules,  so that for any two pairs of open subsets
$V\subset U\subset M$ the diagram
\[
\xy
(-10,10)*+{\scN(U)}="1";
(10,10)*+{\scM(U)}="2";
(-10,-5)*+{\scN(V)}="3";
(10,-5)*+{\scM(V)}="4";
{\ar@{->}^{f_U} "1";"2"};
{\ar@{->}_{\rho_V^U} "1";"3"};
{\ar@{->}^{\rho_V^U} "2";"4"};
{\ar@{->}_{f_V} "3";"4"};
\endxy
\]
commutes (where as before  $\rho^U_V$ are  the restriction map, cf.\ Notation~\ref{not:5.3}).   We denote 
the set of all maps of $\scA$-modules from $\scN$ to $\scM$  by  $\Hom_{\scA}(\scN, \scM)$.
\end{notation}
\begin{remark} \label{rmrk:level1_3.2}
  The set
$\Hom_{\scA}(\scN, \scM)$ is a
$\scA(M)$-module  with the addition and action of $\scA(M)$
defined pointwise: if $a\in \scA(M)$, $T\in
\Hom_{\scA} (\scN, \scM)$ then $(a\cdot T)_U:= a(U) T_U$ for  all
open sets $U\subset M$.  Similarly for $S, T\in
\Hom_{\scA} (\scN, \scM)$, $(T+S)_U := T_U + S_U$ for all open
$U\subset M$.
\end{remark}

\begin{definition}[The $\scA(M)$-module $\cDer(\scA)$ of $\cin$-derivations of a $\cin$-ringed
  space $(M,\scA)$] \label{def:3.2level1}
  Let $(M,\scA)$ be a local $\cin$-ringed space.  A {\sf
    $\cin$-derivation of the sheaf $\scA$} is a map of presheaves of
 real vector spaces $v:\scA \to \scA$ so that for every open set $U\subset M$ the
  map $v_U:\scA(U)\to \scA(U)$ is a $\cin$-derivation of the
  $\cin$-ring $\scA(U)$.

We view the set $\cDer(\scA)$ of all
$\cin$-derivations of the sheaf $\scA$ is an $\scA(M)$-module: for
$a\in \scA(M)$ and $v\in \cDer(\scA)$, $(av)_U: = a|_U v_U$ for all
$v\in \Open(M)$.
\end{definition}

\begin{lemma} \label{lem:level03.6} Let $(M, \scA)$ be a local $\cin$-ringed space.
 The set $\cDer(\scA)$ of $\cin$-derivations of the structure sheaf $\scA$ is a real Lie
 algebra with the bracket defined by the commutator:
 \[
   [v,w] = v\circ w - w\circ v
 \]
for all $v, w\in \cDer(\scA)$.
\end{lemma}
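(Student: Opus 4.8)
The plan is to reduce the statement to Lemma~\ref{lem:cder-lie}, which already establishes the analogous fact for the $\cin$-ring of a single $\cin$-ring $\scC$. First I would observe that if $v, w \in \cDer(\scA)$, then for each open $U \subset M$ the composite $(v\circ w)_U = v_U \circ w_U$ and $(w\circ v)_U = w_U \circ v_U$ are well-defined $\R$-linear endomorphisms of $\scA(U)$, so $[v,w]_U := v_U \circ w_U - w_U \circ v_U$ is an $\R$-linear endomorphism of $\scA(U)$. Two things need to be checked: (1) that the family $\{[v,w]_U\}_{U\in\Open(M)}$ is a map of presheaves of real vector spaces, i.e., commutes with the restriction maps $\rho^U_V$; and (2) that each $[v,w]_U$ is a $\cin$-derivation of the $\cin$-ring $\scA(U)$.

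For (1), since $v$ and $w$ are individually maps of presheaves we have $\rho^U_V \circ v_U = v_V \circ \rho^U_V$ and likewise for $w$; composing these two naturality squares gives $\rho^U_V \circ (v_U\circ w_U) = v_V \circ w_V \circ \rho^U_V$, and similarly with the roles of $v$ and $w$ swapped. Subtracting yields $\rho^U_V \circ [v,w]_U = [v,w]_V \circ \rho^U_V$, which is exactly naturality of the commutator. For (2), this is immediate from Lemma~\ref{lem:cder-lie} applied to the $\cin$-ring $\scC = \scA(U)$: since $v_U, w_U \in \cDer(\scA(U))$ by the definition of a $\cin$-derivation of the sheaf (Definition~\ref{def:3.2level1}), their commutator $v_U\circ w_U - w_U\circ v_U$ lies in $\cDer(\scA(U))$. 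Hence $[v,w] \in \cDer(\scA)$.

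Once closure is established, the Lie algebra axioms are essentially formal. The bracket $[\cdot,\cdot]$ on $\cDer(\scA)$ is $\R$-bilinear because it is defined levelwise and each level is the ordinary commutator on the $\R$-vector space of $\R$-linear endomorphisms of $\scA(U)$, where bilinearity and antisymmetry are standard; and the Jacobi identity holds at each level $U$ (again a standard identity for the commutator of linear maps $v_U\circ w_U - w_U\circ v_U$), hence holds for the presheaf maps since all operations are defined pointwise in $U$. One should note in passing that $\cDer(\scA)$ is closed under the $\R$-vector space operations of $\Hom$ levelwise, so that the bracket indeed lands in $\cDer(\scA)$ as an $\R$-vector space; this was already implicit in viewing $\cDer(\scA)$ as an $\scA(M)$-module in Definition~\ref{def:3.2level1}.

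The only mildly delicate point — and the one I would single out as the main obstacle, though it is not hard — is verifying naturality of the commutator with respect to restriction maps, i.e., step (1): one must be careful that composing and subtracting commuting squares really does produce a commuting square, which it does precisely because the restriction maps are $\R$-linear (so subtraction is compatible) and because composition of natural transformations of presheaves is again natural. Everything else is a levelwise appeal to Lemma~\ref{lem:cder-lie} and to the standard fact that endomorphisms of a vector space under the commutator form a Lie algebra.
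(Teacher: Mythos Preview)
Your proof is correct and takes essentially the same approach as the paper: both reduce the closure of $\cDer(\scA)$ under the commutator to a levelwise application of Lemma~\ref{lem:cder-lie}. The only difference is packaging: the paper observes that $\Hom(\scA,\scA)$ (morphisms of sheaves of real vector spaces) is already an associative algebra under composition and hence a Lie algebra, so that naturality of $[v,w]$ and the Lie algebra axioms come for free, and one only needs to check that $\cDer(\scA)$ is closed under the bracket---which is exactly your step (2). You instead verify naturality and the Jacobi identity by hand, which is slightly more work but of course equivalent.
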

\begin{proof}
View the sheaf $\scA$ of $\cin$-rings as a sheaf of vector spaces.
Then $\Hom(\scA, \scA)$ is a real  associative algebra under composition,
hence a Lie algebra with the bracket given by the commutator.

Any  $v\in \cDer(\scA) \subset \Hom(\scA, \scA)$ is a collection of
maps $v = \{v_U:\scA(U) \to \scA(U)\}_{U\in \Open(M)}$ compatible with
the restrictions $\rho^U_V: \scA(U)  \to \scA(V)$ with each
$v_U$  a $\cin$-derivation of the $\cin$-ring $\scA(U)$.  Given $v,
w\in \cDer(\scA)$ their commutator $[v, w]: = v\circ w - w\circ v$ is the collection of maps
$ \{([v,w])_U\} _{U\in \Open(M)}$ with $([v,w])_U = [v_U, w_U]$ for all
  $U\subset M$. By Lemma~\ref{lem:cder-lie} each commutator $[v_U,
  w_U]$ is in $\cDer(\scA(U))$.  Hence $\cDer(\scA)$ is a Lie
  subalgebra of $\Hom(\scA, \scA)$ and, in particular, a Lie algebra.
\end{proof}

\begin{remark}
In the case where the sheaf $\scA$ on a space $M$ is the structure
sheaf $\scF_M$ of a differential space $(M, \scF)$ (see
Definition~\ref{def:str_sheaf} above), the Lie algebra $\cin\Der
(\scA)$ is isomorphic to the Lie algebra of derivations of the
$\cin$-ring $\scF$.  See Theorem~\ref{thm:a1} in Appendix~\ref{app:A}.

\end{remark}  

\begin{definition} \label{def:12.16} A {\sf presheaf of commutative
    graded algebras (CGAs)} $ (\scM^\bullet, \wedge)$ over a
  $\cin$-ringed space $(M, \scA)$ is a sequence of $\scA$-modules
  $\{\scM^k\}_{k\in \Z}$ together with a sequence of maps of
  presheaves $\wedge: \scM^k\otimes \scM^\ell \to \scM ^{k+\ell} $ of
  $\scA$-modules so that $\wedge $ is associative and graded
  commutative. That is, 
 we require that the appropriate diagrams in the category
of presheaves of $\scA$-modules commute.

Equivalently a presheaf $ (\scM^\bullet, \wedge)$ of CGAs over $(M, \scA)$ is a functor
\[
\op {\Open(M)}\to \CGA
\]
That is, for all open sets $U\subset M$ the pair
$ ( \{\scM^k(U)\}_{k\in \Z}, \wedge_U) $ is CGA over the $\cin$-ring
$\scA(U)$ and the restriction maps
$r^U_V: ( \scM^\bullet(U), \wedge_U) \to ( \scM^\bullet(V), \wedge_V)
$ are maps of CGAs (cf. Definition~\ref{def:catCGA}).

\end{definition}

\begin{definition} \label{def:12.16-1} A {\sf sheaf of commutative
    graded algebras (CGAs)} $ (\scM^\bullet, \wedge)$ over a
  $\cin$-ringed space $(M, \scA)$ is a presheaf of CGAs $
  (\scM^\bullet, \wedge)$  with the property that $\scM^k$ is a sheaf
  for each $k\in \Z$.
\end{definition}

\begin{definition} \label{def:gr-der_presh}
A {\sf graded derivation $X$ of degree $k$} of a presheaf $(\scM^\bullet
=\{\scM^k\}_{k\in \Z},
\wedge)$ of CGAs over a
  $\cin$-ringed space $(M, \scA)$  is a collection of maps of presheaves $\{X^\ell:
\scM^\ell \to \scM^{\ell +k}\}_{\ell \in \Z}$ of $\scA$-modules so that
\[
X_U: \scM^\bullet(U) \to \scM^{\bullet +k}(U)
\]
is a degree $k$ derivation of the CGA $(\scM^\bullet (U),
\wedge)$ of $\scA(U)$-modules
for each open
set $U\in \Open(M)$.     Equivalently $X =\{X^\ell\}_{\ell\in \Z}$ makes the appropriate diagrams
in the category of presheaves of $\scA$-modules commute.
\end{definition}

\begin{notation}[$\Der^k(\scM^\bullet)$]\quad Let $(\scM^\bullet,\wedge)$ be a presheaf of CGAs
over a $\cin$-ringed space $(M, \scA)$.  Denote the set of all graded
derivations of degree $k$ of $(\scM^\bullet, \wedge)$ by
$\Der^k(\scM^\bullet)$.
\end{notation}

\begin{remark}
The set $\Der^k(\scM^\bullet)$ is easily seen
to be an $\scA(M)$-module (cf.\ Remark~\ref{rmrk:level1_3.2}). The two operations are defined by:
\begin{eqnarray*}
\{X^\ell\}_{\ell\in \Z} + \{Y^\ell\}_{\ell\in \Z}: =
  \{X^\ell+Y^\ell\}_{\ell\in \Z},\\
a \cdot \{X^\ell\}_{\ell\in \Z}  := \{a\cdot X^\ell\}_{\ell\in \Z}
\end{eqnarray*}
for all $X=\{X^\ell\}_{\ell\in \Z}, Y=\{Y^\ell\}_{\ell\in \Z}    \in
\Der^k (\scM^\bullet)$ and all $a\in \scA(M)$.
\end{remark}

\begin{definition}
A {\sf presheaf of commutative differential graded algebras (CDGAs)} $
(\scM^\bullet, \wedge, d)$ over a $\cin$-ringed space $(M,\scA)$ is a
presheaf $ (\scM^\bullet, \wedge)$ of CGAs over $(M,\scA)$ together
with a sequence of maps $\{d:\scM^k \to \scM^{k+1}
\}_{k\in \Z}$ of presheaves of real vector spaces so that for each
open set $U\subset M$ the triple $(\scM^\bullet(U), \wedge_U, d_U)$ is
a CDGA over the $\cin$-ring $\scA(U)$.

Equivalently a presheaf CDGAs over a $\cin$-ringed space $(M, \scA)$
is a functor from the category $\op{\Open(M)}$ of open subsets of $M$
to the category $\CDGA$ of commutative differential graded algebras
(cf.\ Definition~\ref{def:catCDGA} ).
\end{definition}

\begin{definition}
A {\sf sheaf of CDGAs} is a presheaf of CDGAs
$(\scS^\bullet, \wedge, d)$ so that $\scS^k$ is a sheaf for all
$k\in \Z$.
\end{definition}

\begin{proposition} \label{prop:3.15level1} Let
  $(\scM^\bullet, \wedge)$ be a presheaf of CGAs over a local
  $\cin$-ringed space $(M,\scA)$. % 
    Then
\begin{enumerate}
\item \label{3.16.i} For any two graded derivations $X\in \Der^{|X|}(\scM^\bullet)$,
  $Y\in \Der^{|Y|}(\scM^\bullet)$ the graded commutator
\[  
[X,Y]:= X\circ Y - (-1)^{|X||Y|} Y\circ X
\]
is a graded derivation of $(\scM^\bullet, \wedge)$ of degree $|X|+|Y|$.
\item $\Der^\bullet (\scM^\bullet ) = \{\Der^k(\scM^\bullet)\}_{k\in \Z}$ is a graded (real)
  Lie algebra with the bracket defined by the graded commutator.
\end{enumerate}
\end{proposition}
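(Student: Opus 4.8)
The plan is to reduce the statement to a Koszul-sign computation carried out one open set at a time. By Definition~\ref{def:gr-der_presh} a graded derivation of degree $k$ of the presheaf $(\scM^\bullet,\wedge)$ is precisely a family $X=\{X_U\}_{U\in\Open(M)}$, compatible with the restriction maps $\rho^U_V$, such that each $X_U$ is a degree-$k$ graded derivation of the CGA $(\scM^\bullet(U),\wedge_U)$ over the $\cin$-ring $\scA(U)$; and composition, $\R$-linear combinations, and hence the graded commutator of such families are all computed object-wise over $\Open(M)$. So it suffices to prove, for each fixed $U$, the ``level $0$'' statement asserted in Definition~\ref{def:gr-Lie-graded-der-ring} --- that the graded commutator of two graded derivations of a CGA over a $\cin$-ring is again a graded derivation of the expected degree --- and then to observe that the resulting family is still compatible with restrictions.

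For part (\ref{3.16.i}), fix $U$ and write $X=X_U$, $Y=Y_U$, $k=|X|$, $\ell=|Y|$. The composite $X\circ Y-(-1)^{k\ell}Y\circ X$ clearly raises degree by $k+\ell$, so the only thing left is the graded Leibniz rule of Definition~\ref{def:gr-der}. Expanding $X(Y(x\wedge y))$ by using the Leibniz rule for $Y$ once and for $X$ twice (and symmetrically for $Y(X(x\wedge y))$) produces, in each term, a contribution of the form $(\,\cdot\,)\wedge y$, one of the form $x\wedge(\,\cdot\,)$, and two ``mixed'' contributions involving $X(x)\wedge Y(y)$ and $Y(x)\wedge X(y)$. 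In the combination $X\circ Y-(-1)^{k\ell}Y\circ X$ the first two kinds of contributions reassemble into exactly $[X,Y](x)\wedge y+(-1)^{|x|(k+\ell)}x\wedge[X,Y](y)$, and a short bookkeeping of Koszul signs shows that the coefficients multiplying the two mixed contributions cancel. Hence $[X,Y]_U\in\Der^{k+\ell}(\scM^\bullet(U))$ for every $U$. That $\{[X,Y]_U\}_U$ commutes with the $\rho^U_V$ is automatic, since $X$ and $Y$ do and since composites and $\R$-linear combinations of maps of presheaves are again maps of presheaves; thus $[X,Y]\in\Der^{|X|+|Y|}(\scM^\bullet)$.

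For part (ii): bilinearity over $\R$ and graded antisymmetry $[Y,X]=-(-1)^{|X||Y|}[X,Y]$ are immediate from the defining formula. For the graded Jacobi identity the cleanest route is to work inside the ambient graded associative $\R$-algebra $\mathcal{E}^\bullet$ whose degree-$k$ part consists of all degree-$k$ maps of presheaves of real vector spaces $\scM^\bullet\to\scM^{\bullet+k}$, with multiplication given by composition (again computed object-wise). In any graded associative algebra the graded commutator satisfies the graded Jacobi identity by a formal manipulation of signs; since $\Der^\bullet(\scM^\bullet)$ is a graded $\R$-subspace of $\mathcal{E}^\bullet$ --- not a subalgebra, but, by part (\ref{3.16.i}), one on which the graded commutator restricts to an internal operation --- the identity is inherited. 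Therefore $(\Der^\bullet(\scM^\bullet),[\cdot,\cdot])$ is a graded real Lie algebra. The one genuinely delicate point is the sign bookkeeping behind the cancellation of the mixed terms in part (\ref{3.16.i}); everything else is either formal or inherited from the ambient associative structure.
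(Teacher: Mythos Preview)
Your proposal is correct and follows the same approach as the paper: write the graded commutator as the object-wise formula $([X,Y])^n = X^{\ell+n}\circ Y^n - (-1)^{k\ell}Y^{k+n}\circ X^n$ and then verify the derivation property and the graded Lie algebra axioms. The paper's proof is terse --- it literally says ``one checks'' for both verifications --- whereas you actually carry out the Leibniz computation and invoke the ambient graded associative algebra for Jacobi, so your version is a strict elaboration of the paper's.
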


\begin{proof}
Given two derivations $X\in \Der^k (\scM^\bullet)$ and $Y\in \Der^\ell
(\scM^\bullet)$ their graded commutator $[X,Y]$ makes sense as a
collection of maps of presheaves
\[
\left\{([X,Y])^n = X^{\ell + n} \circ Y^n - (-1)^{k\ell}Y ^{k+n} \circ X^n:
\scM^n \to \scM^{n+k+\ell}\right \}_{n\in \Z}.
\]
One checks that the graded commutator $[X,Y]$ is a derivation of
degree $|X|+|Y| = k+\ell$ and that the pair 
\[
(\Der^\bullet (\scM^\bullet), [\cdot, \cdot] ) = \left(\{\Der^k(\scM^\bullet)\}_{k\in \Z},
[\cdot, \cdot] \right)
\]
is a graded Lie algebra.
\end{proof}

\subsection*{Internal hom and related constructions}

The following definition is an ``obvious" analogue of the corresponding
well-known definition in algebraic geometry.

\begin{definition} \label{def:4.1level2}
Let $(M,\scA)$ be a $\cin$-ringed space and $\scM$, $\scN$ two
presheaves of $\scA$-modules. % 
We then can form a presheaf $\cHom(\scM, \scN)$ of
$\scA$-modules. We call this presheaf the {\sf internal hom} of
$\scM$ and $\scN$ or the {\sf (pre)sheaf-hom}.  Namely set
\[
\cHom(\scM, \scN) (U): = \Hom(\scM|_U, \scN|_U)
\]
for all open subsets $U\subset M$. Define the restriction maps
$\rho^U_V:  \Hom(\scM|_U, \scN|_U) \to  \Hom(\scM|_V, \scN|_V)$ by
\[
\rho^U_V \left(\{f_W\}_{W\in\Open(U)} \right) = \{f_W\}_{W\in\Open(V)}.
\]
(cf.\ Notation~\ref{not:2.18}).
\end{definition}

\begin{remark} \label{rmrk:int-hom-sheaf}
 The internal hom
  $\cHom(\scM, \scN) $ is a sheaf if $\scN$ and $\scM$ are sheaves.
  This is well-known.
\end{remark}

\begin{remark} \label{rmrk:int-hom-sheaf2}
In fact the internal hom $\cHom(\scM, \scN) $ is a sheaf if just the
target presheaf $\scN$ is a sheaf.  This is because for any open set
$U\in \Open(M)$ any map of presheaves $\varphi: \scM|_U\to \scN|_U$
canonically factors through the sheafification $\eta_U:\scM\to
\sh(\scM|_U) \simeq \sh(\scM)|_U$, which, in turn gives us a bijection
$\eta_U^*: \Hom(\sh(\scM)|_U, \scN|_U)\to \Hom(\scM|_U, \scN_U)$.
These bijections assemble into an isomorphism of presheaves
\[
  \eta^*: \cHom(\sh(\scM), \scN) \to \cHom(\scM, \scN) .
\]
Therefore,
  since  $\cHom(\sh(\scM), \scN)$ is a sheaf, so is $\cHom(\scM, \scN)$.
\end{remark}

\begin{definition}[The $\scA$-module of presheaves $\cin\Dert(\scA)$ 
  of $\cin$-derivations of a $\cin$-ringed
  space $(M,\scA)$] \label{def:3.2level2}
  Let $(M,\scA)$ be a local $\cin$-ringed space.   For each open set
  $U\in \Open(M)$ we define
  \[
\cin\Dert(\scA) (U):= \cDer(\scA|_U).
\]
For any pair of open sets $U,V\in\Open(M)$ with $V\subset U$ the
restriction maps $\rho^U_V : \cin\Dert(\scA) (U)\to \cin\Dert(\scA) (V)$
are given by
\[
\rho^U_V \left( \{v_W\}_{W\in \Open(U)}\right) = \{v_W\}_{W\in \Open(V)}.
\]
Consequently $\cin\Dert(\scA) := \{\cin\Dert(\scA) (U)\}_{U\in
  \Open(M)}$ is a presheaf of $\scA$-modules.  It's a subpresheaf of
$\cHom(\scA, \scA)$.
\end{definition}

\subsection*{The presheaf 
  of $\cin$-algebraic de Rham (a.k.a. differential) forms on a $\LCRS$
  $(M,\scA)$}\mbox{}\\[4pt]
We recall  from \cite{LdR} the construction of the presheaf of $\cin$-algebraic
differential forms $(\Lambda^\bullet \Omega^1_\scA, \wedge, d)$
over  a local
$\cin$-ringed space $(M,\scA)$.   It proceeds as follows: for each open set $U\in \Open(M)$ we have
a $\cin$-ring $\scA(U)$ hence the $\cin$-algebraic de Rham complex
$(\Lambda^\bullet \Omega^1_{\scA(U)}, \wedge, d)$ (see
Definition~\ref{def:dR_for_ring}).  We need to check that the
assignment $U\mapsto (\Lambda^\bullet \Omega^1_{\scA(U)}, \wedge, d)$
extends to a functor from $\op{(\Open(M))}$ to the category $\CDGA$ of
CDGAs.   We already know that $U\mapsto (\Lambda^\bullet
\Omega^1_{\scA(U)}, \wedge)$ is a presheaf of CGAs.  The issue is the
naturality of the differential $d$, which follows from the following proposition.

\begin{proposition} \label{prop:6.5}
A map $f:\scC\to \scB$ of $\cin$-rings induces a unique map
$\Lambda^\bullet(f): \Lambda^\bullet\Omega^1_\scC \to \Lambda^\bullet\Omega^1_\scB$ of
commutative differential graded algebras.  Explicitly
\[
  \Lambda^\bullet(f) (a_0 d_\scC a_1\wedge \cdots \wedge d_\scC a_k)
  = f(a_0) d_\scB f(a_1)\wedge \cdots \wedge d_\scB f(a_k)
\]  
for all $k>0$ and all $a_0,\ldots, a_k\in \scC$.
\end{proposition}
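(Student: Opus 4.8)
The plan is to build the map $\Lambda^\bullet(f)$ in two stages and then check it is a morphism of CDGAs. First I would produce a map on degree one. Given the $\cin$-ring map $f:\scC\to\scB$, the composite $d_\scB\circ f:\scC\to\Omega^1_\scB$ is a $\cin$-derivation of $\scC$ with values in the $\scC$-module $\Omega^1_\scB$ (where the $\scC$-action is restriction of scalars along $f$): indeed, for $g\in\cin(\R^n)$ and $a_1,\dots,a_n\in\scC$, applying $f$ to the identity $g_\scC(a_1,\dots,a_n)$ and using that $f$ preserves all $\cin$-operations together with the derivation property of $d_\scB$ gives exactly \eqref{eq:der}. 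By the universal property of $(\Omega^1_\scC,d_\scC)$ there is a unique $\scC$-module map $\varphi:\Omega^1_\scC\to\Omega^1_\scB$ with $\varphi\circ d_\scC=d_\scB\circ f$; concretely $\varphi(a\,d_\scC b)=f(a)\,d_\scB f(b)$, and this is well defined and $\scC$-linear by universality (no need to argue about relations by hand). Write $\Lambda^1(f):=\varphi$.

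Next I would extend to all exterior powers. Since the $d_\scC a$ generate $\Omega^1_\scC$ as a $\scC$-module, the map $\Lambda^1(f)$ is determined on generators, and because exterior powers are functorial on module maps (over the base ring $\R$, or more precisely one takes the exterior powers of the underlying $\R$-linear map and checks compatibility with the wedge products), setting
\[
\Lambda^\bullet(f)(a_0\,d_\scC a_1\wedge\cdots\wedge d_\scC a_k)
= f(a_0)\,d_\scB f(a_1)\wedge\cdots\wedge d_\scB f(a_k)
\]
gives a well-defined sequence of $\R$-linear maps $\Lambda^k(f):\Lambda^k\Omega^1_\scC\to\Lambda^k\Omega^1_\scB$ that is a morphism of commutative graded algebras in the sense of Definition~\ref{def:catCGA} (the pair $(f,\Lambda^\bullet(f))$ is a morphism in $\Mod$ in each degree, and multiplicativity $\Lambda^\bullet(f)(\omega\wedge\eta)=\Lambda^\bullet(f)(\omega)\wedge\Lambda^\bullet(f)(\eta)$ is immediate from the formula on decomposable elements).

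It remains to check that $\Lambda^\bullet(f)$ commutes with the differentials, i.e. $\Lambda^\bullet(f)\circ d=d\circ\Lambda^\bullet(f)$. Here I would invoke the characterization of $d$ from Theorem~\ref{thm:6.2}: $d$ is the \emph{unique} degree $+1$ derivation of $(\Lambda^\bullet\Omega^1_\scC,\wedge)$ satisfying \eqref{eq:6.d}. One checks that the composite $\Lambda^\bullet(f)^{-1}\!\circ d_\scB\circ\Lambda^\bullet(f)$ — or, avoiding inverses, that both $\Lambda^\bullet(f)\circ d_\scC$ and $d_\scB\circ\Lambda^\bullet(f)$ send $a_0\,d_\scC a_1\wedge\cdots\wedge d_\scC a_k$ to $d_\scB f(a_0)\wedge d_\scB f(a_1)\wedge\cdots\wedge d_\scB f(a_k)$, using \eqref{eq:6.d} on both sides and the formula for $\Lambda^\bullet(f)$; since such decomposable elements generate $\Lambda^\bullet\Omega^1_\scC$ as an $\R$-module and both $d_\scB\circ\Lambda^\bullet(f)$ and $\Lambda^\bullet(f)\circ d_\scC$ are $\R$-linear, they agree everywhere. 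Uniqueness of $\Lambda^\bullet(f)$ follows because any CDGA morphism extending $f$ in degree $0$ must agree with $d_\scB f$ on $d_\scC a=d(a)$ and hence, by multiplicativity and additivity, on all of $\Lambda^\bullet\Omega^1_\scC$ (whose elements are sums of decomposables). The only genuinely delicate point is the very first one — verifying that $d_\scB\circ f$ is a $\cin$-derivation so that universality applies — and once that is in hand everything else is a generators-and-relations bookkeeping argument; I expect no serious obstacle.
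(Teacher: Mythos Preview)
The paper does not actually prove this proposition here; it simply cites \cite[Proposition~7.17]{LdR}. Your argument is correct and is the standard one: use the universal property of $(\Omega^1_\scC,d_\scC)$ to get $\Lambda^1(f)$, extend multiplicatively to a CGA map, and then verify compatibility with $d$ on the $\R$-spanning set $\{a_0\,d_\scC a_1\wedge\cdots\wedge d_\scC a_k\}$ via \eqref{eq:6.d}. This is almost certainly what the cited proof does as well.

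Two small points worth tightening. First, your parenthetical about ``exterior powers of the underlying $\R$-linear map'' is misleading: the exterior powers here are taken over $\scC$ and over $\scB$ respectively, not over $\R$, so ordinary functoriality of $\Lambda^k_\R$ is not what you want. The clean statement is that $(\omega_1,\ldots,\omega_k)\mapsto\varphi(\omega_1)\wedge\cdots\wedge\varphi(\omega_k)$ is $\scC$-multilinear and alternating into $\Lambda^k_\scB\Omega^1_\scB$ (with $\scC$ acting via $f$), hence factors through $\Lambda^k_\scC\Omega^1_\scC$; this is exactly the content of $(f,\Lambda^\bullet(f))$ being a morphism in $\CGA$. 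Second, drop the mention of $\Lambda^\bullet(f)^{-1}$ entirely---you immediately retract it anyway, and it only invites confusion.
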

\begin{proof}
See \cite[Proposition~7.17]{LdR}.
\end{proof}

\begin{notation}[The presheaf $(\Lambda^\bullet \Omega^1_\scA, \wedge,
  d)$ of $\cin$-algebraic differential forms over a $\LCRS$ $(M,\scA)$]
Proposition~\ref{prop:6.5} implies that for a local $\cin$-ringed spaces $(M, \scA)$ the
assignment
\[
\Open(M)\ni U\to (\Lambda^\bullet \Omega^1_{\scA(U)}, \wedge, d)
\]  
is a presheaf of commutative differential graded algebras.  We denote
this presheaf by $(\Lambda^\bullet \Omega^1_\scA, \wedge, d)$ and
call it the {\sf presheaf of $\cin$-algebraic differential forms}
(and  the {\sf presheaf of $\cin$-algebraic de Rham forms}; we
use the two terms interchangeably).
\end{notation}

\begin{remark}
Note that given a presheaf $(\Lambda^\bullet \Omega^1_\scA, \wedge,
d)$ of $\cin$-algebraic differential forms we can also view $d$ as a
graded derivation of degree 1 of the presheaf of CGA $(\Lambda^\bullet
\Omega^1_\scA, \wedge)$, cf. Remark~\ref{rmrk:values}.
\end{remark}

\begin{remark} \label{rmrk:2.82june}
An attentive reader may have noticed by this point that we use the
symbol $d$ to denote several different (but related) maps:
\begin{itemize}
\item[(i)] $d= d_\scC:\scC\to \Omega^1_\scC$, the universal derivation of a
  $\cin$-ring $\scC$.

\item[(ii)] $d= d_\scC:\Lambda^\bullet \Omega^1_\scC \to
  \Lambda^{\bullet} \Omega^1_\scC $, the extension of $d$ of (i)
  to a graded derivation of the CGA $ \Lambda^{\bullet +1}
  \Omega^1_\scC $ of $\cin$-algebraic differential forms of a
  $\cin$-ring $\scC$

\item[(iii)] $d= d_\scA:  \Lambda^\bullet \Omega^1_\scA \to
\Lambda^{\bullet +1} \Omega^1_\scA$, a map of graded presheaves of
modules over a local $\cin$-ringed space $(M,\scA)$ (and a graded
derivation of the  presheaf of CGAs $(\Lambda^\bullet \Omega^1_\scA,
\wedge)$).  Note that
\[
d = \{d_W: \Lambda^\bullet \Omega^1_\scA(W) \to \Lambda^{\bullet +1}
\Omega^1_\scA(W)\}_{W\in \Open(M)}
\]
where each $W$ component $d_W\in \Der^1 (\Lambda^\bullet
\Omega^1_\scA(W)) \equiv \Der^1 (\Lambda^\bullet
\Omega^1_{\scA(W)}) $ is really $d_{\scA(W)}$ of (ii).
\end{itemize}

\end{remark}

\begin{lemma} \label{lem:6.5}
The sheaf of $\cin$-derivations  $\cin \Dert (\scA)$ of a locally $\cin$-ringed space $(M,\scA)$ is a sheaf of Lie algebras.
\end{lemma}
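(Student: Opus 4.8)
The plan is to exhibit $\cin\Dert(\scA)$ as a \emph{subsheaf} of the sheaf-hom $\cHom(\scA,\scA)$ (with $\scA$ regarded as a sheaf of real vector spaces) and then to observe that the componentwise commutator bracket is a morphism of presheaves $\cin\Dert(\scA)\times\cin\Dert(\scA)\to\cin\Dert(\scA)$ which makes each section a Lie algebra. Combining these two facts gives the assertion.

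First I would recall that by Remark~\ref{rmrk:int-hom-sheaf2} the internal hom $\cHom(\scA,\scA)$ is a sheaf, since its target $\scA$ is a sheaf, and that by Definition~\ref{def:3.2level2} the presheaf $\cin\Dert(\scA)$ is a subpresheaf of $\cHom(\scA,\scA)$ sharing the same restriction maps. A subpresheaf of a separated presheaf is separated, so it remains only to verify the gluing axiom, and the content of that verification is the locality of the defining condition ``$v_W$ is a $\cin$-derivation of $\scA(W)$.'' This is the one step that requires work; everything else is formal bookkeeping.

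For the gluing step, let $U\subset M$ be open, let $\{U_\alpha\}_\alpha$ be an open cover of $U$, and suppose $v=\{v_W\}_{W\in\Open(U)}\in\cHom(\scA,\scA)(U)$ satisfies $\rho^U_{U_\alpha}(v)\in\cin\Dert(\scA)(U_\alpha)=\cDer(\scA|_{U_\alpha})$ for every $\alpha$; I must show $v\in\cDer(\scA|_U)$, i.e.\ that $v_W$ satisfies the derivation identity \eqref{eq:der} for every open $W\subset U$. Fix such a $W$, an integer $n>0$, $f\in\cin(\R^n)$, and $a_1,\dots,a_n\in\scA(W)$. Both sides of \eqref{eq:der} are elements of $\scA(W)$, so since $\scA$ is a sheaf it suffices to check that they agree after restriction to each $W\cap U_\alpha$. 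On $W\cap U_\alpha\subset U_\alpha$ this follows from three facts: the restriction map $\rho^W_{W\cap U_\alpha}$ is a morphism of $\cin$-rings (and of $\scA(W)$-modules), the map $v$ commutes with restrictions because it is a map of presheaves, and $v_{W\cap U_\alpha}$ is a $\cin$-derivation by hypothesis. A short diagram chase through these three facts reduces both restricted sides to the derivation identity for $v_{W\cap U_\alpha}$ applied to the restrictions of the $a_i$. Hence $v_W$ satisfies \eqref{eq:der}, so $v\in\cin\Dert(\scA)(U)$, proving $\cin\Dert(\scA)$ is a sheaf.

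Finally I would record the Lie-algebra structure. By Lemma~\ref{lem:level03.6} applied to the local $\cin$-ringed space $(U,\scA|_U)$, each $\cin\Dert(\scA)(U)=\cDer(\scA|_U)$ is a real Lie algebra under $[v,w]=v\circ w-w\circ v$, and concretely $([v,w])_W=[v_W,w_W]$ for all $W\in\Open(U)$, with each $[v_W,w_W]$ again a $\cin$-derivation by Lemma~\ref{lem:cder-lie}. Since both the bracket and the restriction maps of $\cin\Dert(\scA)$ are defined componentwise over $\Open(U)$, we get $\rho^U_V([v,w])=[\rho^U_V v,\rho^U_V w]$ for all $V\subset U$, so $[\cdot,\cdot]$ is a morphism of presheaves $\cin\Dert(\scA)\times\cin\Dert(\scA)\to\cin\Dert(\scA)$. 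Together with the preceding paragraph this exhibits $\cin\Dert(\scA)$ as a sheaf of real Lie algebras, as claimed. The only mildly delicate point is the sheaf-gluing verification above; the rest is routine.
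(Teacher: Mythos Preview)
Your proof is correct and in fact more complete than the paper's. The paper's proof only checks that the restriction maps $\rho^U_V:\cin\Der(\scA|_U)\to\cin\Der(\scA|_V)$ preserve composition and hence the commutator bracket, which shows $\cin\Dert(\scA)$ is a \emph{presheaf} of Lie algebras; the sheaf property itself is never verified there and is apparently taken as evident from $\cin\Dert(\scA)$ being a subpresheaf of the sheaf $\cHom(\scA,\scA)$. Your argument makes this explicit: you use that $\cHom(\scA,\scA)$ is a sheaf to glue compatible local derivations to a morphism $v$ of presheaves, and then check that the $\cin$-derivation identity \eqref{eq:der} for each $v_W$ can be tested locally on the cover $\{W\cap U_\alpha\}$ because $\scA$ is a sheaf and restrictions are $\cin$-ring maps. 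For the Lie-algebra part your reasoning coincides with the paper's. So the difference is that you supply the missing sheaf verification; what this buys you is a self-contained proof of the lemma as stated, at the cost of a routine but nontrivial extra paragraph.
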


\begin{proof}
For $U\in \Open(M)$ the $\scA(U)$-module $\cin\Dert(\scA)(U)$ is
$\cin\Der(\scA|_U)$, by definition.  The Lie bracket $[\cdot,
\cdot]_U$ on the vector
space $\cin\Der(\scA|_U)$ is the commutator :
\[
[v,w]_U := v\circ w - w\circ v
\]
for all $v,w\in \cin\Der(\scA|_U)$.
Since the restrictions maps $\rho^U_V: \cin\Der(\scA|_U) \to
\cin\Der(\scA|_V)$ preserve composition,
\[
\rho^U_V ([v,w]_U)  := \rho^U_V (v\circ w - w\circ v) = [\rho^U_V(v),
\rho^U_V(w)]_V
\]
for all  $v,w\in \cin\Der(\scA|_U)$.   Thus $\cin \Dert (\scA)$ is a sheaf of Lie algebras.
\end{proof}

\begin{definition}[The presheaf $\Dert^k (\scM^\bullet)$ of graded
  derivations of degree $k$ of a presheaf $(\scM^\bullet, \wedge)$ of
  commutative graded algebras.]
Let $(M, \scA)$ be a local $\cin$-ringed space and $(\scM^\bullet,
\wedge)$ a presheaf of CGAs over $(M,\scA)$.  We define the presheaf 
$\Dert^k (\scM^\bullet)$ of graded derivations of degree $k$ of
$(\scM^\bullet, \wedge)$ by setting
\[
\Dert^k (\scM^\bullet) (U) := \Der^k (\scM^\bullet|_U) 
\]
for all open subsets $U\subset M$.   The restriction maps $\rho^U_V:
\Dert^k (\scM^\bullet) (U) \to \Dert^k (\scM^\bullet) (V)$ are defined
by
\[
\rho^U_V \left(\{X_W\}_{W\in\Open(U)} \right) = \{X_W\}_{W\in\Open(V)}.
\]
(compare with Definition~\ref{def:4.1level2}).
\end{definition}

\begin{lemma}  Let $(\scM^\bullet, \wedge)$ be a presheaf of CGAs
  over a $\cin$-ringed spaces $(M,\scA)$ and 
  $\Dert^\bullet (\scM^\bullet)= \{\Dert^k (\scM^\bullet)\}_{k\in
    \Z}$ the presheaf of graded derivations. Then  $\Dert^\bullet
  (\scM^\bullet) $ is a presheaf of real graded Lie algebras.
\end{lemma}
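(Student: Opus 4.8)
The plan is to reduce the statement to Proposition~\ref{prop:3.15level1} applied section-wise, and then to check that the restriction maps are morphisms of graded Lie algebras. First I would observe that for each open $U\subset M$ the restriction $(\scM^\bullet|_U, \wedge)$ is itself a presheaf of CGAs, now over the local $\cin$-ringed space $(U,\scA|_U)$; this is immediate from Definition~\ref{def:12.16}, since restricting a functor $\op{\Open(M)}\to\CGA$ along $\op{\Open(U)}\hookrightarrow\op{\Open(M)}$ again lands in $\CGA$. Hence Proposition~\ref{prop:3.15level1} applies and tells us that $\Dert^\bullet(\scM^\bullet)(U)=\Der^\bullet(\scM^\bullet|_U)$, equipped with the graded commutator $[X,Y]:=X\circ Y-(-1)^{|X||Y|}Y\circ X$, is a real graded Lie algebra, with $[\Der^k(\scM^\bullet|_U),\Der^\ell(\scM^\bullet|_U)]\subset\Der^{k+\ell}(\scM^\bullet|_U)$ for all $k,\ell\in\Z$.

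Next I would unwind the bracket formula so that its naturality in $U$ becomes transparent. A degree $k$ derivation $X\in\Der^k(\scM^\bullet|_U)$ is a family $\{X_W:\scM^\bullet(W)\to\scM^{\bullet+k}(W)\}_{W\in\Open(U)}$ compatible with the restriction maps of $\scM^\bullet$, and by the formula \eqref{eq:gr-comm} (transcribed to the presheaf setting in the proof of Proposition~\ref{prop:3.15level1}) the bracket $[X,Y]$ is the family $\{[X_W,Y_W]\}_{W\in\Open(U)}$, where each $[X_W,Y_W]$ is the graded commutator of $\R$-linear maps computed inside the CGA $\scM^\bullet(W)$ over $\scA(W)$. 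In particular the bracket at the level of sections over $U$ is assembled out of the same component-wise data at every level, and it visibly lands back in $\Der^\bullet(\scM^\bullet|_U)$ because that is precisely Proposition~\ref{prop:3.15level1}(i).

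Finally I would check the compatibility with restrictions. For $V\subset U$ the map $\rho^U_V$ simply discards the components indexed by opens not contained in $V$, so
\[
\rho^U_V([X,Y])=\{[X_W,Y_W]\}_{W\in\Open(V)}=[\rho^U_V(X),\rho^U_V(Y)];
\]
the same bookkeeping shows $\rho^U_V$ is $\R$-linear, degree-preserving, and that the $\rho$'s compose correctly (they already do so as restriction maps of the underlying presheaves $\Dert^k(\scM^\bullet)$). Thus $\Dert^\bullet(\scM^\bullet)$ is a functor from $\op{\Open(M)}$ to the category of real graded Lie algebras, i.e.\ a presheaf of real graded Lie algebras. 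There is no genuine obstacle here: the only point requiring care is that the graded commutator of two graded derivations is again a graded derivation and respects the grading, but this is exactly the content of Proposition~\ref{prop:3.15level1}, and naturality in $U$ is automatic since the bracket is defined by the identical component-wise formula on every open set. So the "hard part" is purely organizational — lining up the section-wise graded Lie algebra structure with the restriction maps — which is a one-line verification once the definitions are unwound.
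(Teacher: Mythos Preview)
Your proposal is correct and follows essentially the same approach as the paper: both invoke Proposition~\ref{prop:3.15level1} to equip each $\Dert^\bullet(\scM^\bullet)(U)=\Der^\bullet(\scM^\bullet|_U)$ with its graded Lie algebra structure, unwind the bracket as $[X,Y]=\{[X_W,Y_W]\}_{W\in\Open(U)}$, and then verify $\rho^U_V([X,Y])=[\rho^U_V(X),\rho^U_V(Y)]$ by the one-line observation that restriction merely discards components indexed by opens not in $V$. Your write-up is slightly more explicit about why the section-wise structure is a graded Lie algebra and about the auxiliary properties of $\rho^U_V$, but there is no substantive difference.
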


\begin{proof}
We need to check that for   any pair of open sets $V,U\in \Open(M)$ with $V\subset U$, the
restriction map 
\[
\rho^U_V :   \Dert^\bullet (\scM^\bullet) (U) \to \Dert^\bullet (\scM^\bullet) (V)
\]
is a map of Lie algebras.
By definition of the presheaf $\Dert^\bullet (\scM^\bullet)$, its
value $\Dert^\bullet (\scM^\bullet)(U)$ on $U\in \Open(M)$ is $\Der^\bullet (\scM^\bullet|_U)$, which is
a  graded Lie algebra with the bracket given by the graded commutator:
for all $X,Y\in \Der^\bullet (\scM^\bullet|_U)$
\[
[X,Y]:= X\circ Y -  (-1) ^{|X||Y|} Y\circ X.
\]  
Note that $X = \{X_W\in \Der^\bullet (\scM^\bullet|_U) (W)\}_{W\in
  \Open(M)}$, $Y = \{Y_W \in \Der^\bullet (\scM^\bullet|_U) (W)\}_{W\in
  \Open(M)}$ and $X\circ Y = \{X_W\circ Y_W\}_{W\in \Open(U)}$.
Consequently
\[
  [X,Y] = \{[X_W, Y_W]\}_{W\in \Open(M)}.
\]
Now for any pair of open sets $V,U\in \Open(M)$ with $V\subset U$, the
restriction map 
\[
\rho^U_V :   \Dert^\bullet (\scM^\bullet) (U) \to \Dert^\bullet (\scM^\bullet) (V)
\]
is given by
\[
\rho^U_V (\{Z_W\}_{W\in \Open(U)} ) = \{Z_W\}_{W\in \Open(V)} .
\]  
Therefore 
\[
\rho^U_V ([X,Y]) = \{([X, Y])_W\}_{W\in \Open(V)} = \{[X_W,
Y_W]\}_{W\in \Open(V)} = [\rho^U_V(X),  \rho^U_V (Y)]
\]
and we are done.
\end{proof}

\begin{proposition} \label{prop:ad(x)} Let $(\scM^\bullet, \wedge)$ be
  a presheaf of CGAs over a $\cin$-ringed spaces $(M,\scA)$. Then any
  derivation
  $X\in \Der^\bullet (\scM^\bullet) \equiv
  \Dert^\bullet(\scM^\bullet)(M)$ of degree $k$ gives rise to a map of
  presheaves of real vector spaces
\[
 {ad}(X): \Dert^\bullet(\scM^\bullet)\to \Dert^{\bullet+k} (\scM^\bullet)
\]
with components 
${ad}(X)_U : \Dert^\bullet(\scM^\bullet)(U) \to
\Dert^{\bullet+k} (\scM^\bullet)(U) $ given by
\[
{ad}(X)_U \,(Y) := \{[X_W, Y_W]\}_{W\in \Open(U)}
\]

for all $Y\in \Dert^\bullet(\scM^\bullet)(U)$.
\end{proposition}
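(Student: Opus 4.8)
The plan is to check three things in turn --- that each ${ad}(X)_U$ lands where claimed, that it is $\R$-linear, and that the family $\{{ad}(X)_U\}_{U\in\Open(M)}$ is compatible with restrictions --- none of which is deep; the content is entirely bookkeeping around the componentwise descriptions of brackets, module structures, and restriction maps.

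First I would fix $U\in\Open(M)$ and a homogeneous $Y=\{Y_W\}_{W\in\Open(U)}\in\Der^\ell(\scM^\bullet|_U)$ and argue that ${ad}(X)_U(Y):=\{[X_W,Y_W]\}_{W\in\Open(U)}$ is a well-defined element of $\Der^{\ell+k}(\scM^\bullet|_U)=\Dert^{\bullet+k}(\scM^\bullet)(U)$. For each open $W\subseteq U$, both $X_W$ and $Y_W$ are graded derivations of the CGA $(\scM^\bullet(W),\wedge_W)$ over the $\cin$-ring $\scA(W)$, of degrees $k$ and $\ell$ respectively; hence their graded commutator $[X_W,Y_W]=X_W\circ Y_W-(-1)^{k\ell}Y_W\circ X_W$ is a degree $k+\ell$ derivation of that CGA --- this is the ring-level content of Proposition~\ref{prop:3.15level1}, i.e.\ Definition~\ref{def:gr-Lie-graded-der-ring}. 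That the resulting family commutes with the restriction maps of $\scM^\bullet$, so that it is a genuine map of presheaves, follows because $X|_U=\{X_W\}_{W\in\Open(U)}$ and $Y=\{Y_W\}_{W\in\Open(U)}$ individually do: for $W'\subseteq W\subseteq U$, composing $\rho^W_{W'}\circ X_W=X_{W'}\circ\rho^W_{W'}$ with the analogous identity for $Y$ in both orders and subtracting with the Koszul sign gives $\rho^W_{W'}\circ[X_W,Y_W]=[X_{W'},Y_{W'}]\circ\rho^W_{W'}$.

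Next, $\R$-linearity of ${ad}(X)_U$ is immediate: addition and the $\scA(M)$-action on $\Der^\bullet(\scM^\bullet|_U)$ are defined componentwise and $[X_W,\,\cdot\,]$ is $\R$-bilinear, so ${ad}(X)_U(aY+bY')=a\,{ad}(X)_U(Y)+b\,{ad}(X)_U(Y')$ for all $a,b\in\R$. (It is generally not $\scA(U)$-linear, which is exactly why ${ad}(X)$ is only asserted to be a map of presheaves of real vector spaces.) Finally, to see that ${ad}(X)$ is a morphism of presheaves I would note, for $V\subseteq U$ in $\Open(M)$ and $Y\in\Der^\bullet(\scM^\bullet|_U)$, that by the definition of the restriction maps of $\Dert^\bullet(\scM^\bullet)$ both $\rho^U_V({ad}(X)_U(Y))$ and ${ad}(X)_V(\rho^U_V(Y))$ are equal to $\{[X_W,Y_W]\}_{W\in\Open(V)}$; the only point is that the components $X_W$ used in ${ad}(X)_V$ are those of the single global derivation $X$, so the two families agree on the nose.

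The nearest thing to an obstacle is simply keeping straight the identification $\Der^\bullet(\scM^\bullet)\equiv\Dert^\bullet(\scM^\bullet)(M)$ and confirming both that $[X_W,Y_W]$ sits in the correctly $k$-shifted module and that the family is restriction-compatible; as indicated above, both reduce to the already-established fact (Proposition~\ref{prop:3.15level1}) that graded commutators of graded derivations are again graded derivations, together with the hypothesis that $X$ and $Y$ are maps of presheaves.
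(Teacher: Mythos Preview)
Your proposal is correct and follows essentially the same approach as the paper: the paper's proof checks only the restriction-compatibility step (your third item), via exactly the computation $\rho^U_V(\{[X_W,Y_W]\}_{W\in\Open(U)})=\{[X_W,Y_W]\}_{W\in\Open(V)}={ad}(X)_V(\rho^U_V(Y))$, tacitly treating well-definedness and $\R$-linearity as immediate from Proposition~\ref{prop:3.15level1}. Your version is simply more explicit about those routine checks.
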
   

\begin{proof}
We need to check that for   any pair of open sets $V,U\in \Open(M)$
with $V\subset U$
\[
\rho^U_V \circ {ad}(X)_U = {ad}(X)_V \circ \rho^U_V.
\]
For any $Y\in \Dert^\bullet(\scM^\bullet)(U)$
\[
\rho^U_V ({ad}(X)_U Y) = \rho^U_V (\{[X_W,  Y_W]\}_{W\in
  \Open(U)}) = \{[X_W,  Y_W]\}_{W\in\Open(V )}=  {ad}(X)_V
( \rho^U_V (Y)).
\]  
\end{proof}

\begin{corollary} \label{cor:6.11} For any presheaf $(\scM^\bullet, \wedge, d)$  of CDGAs over a
$\cin$-ringed spaces $(M,\scA)$ there is a map of presheaves
\[
  {ad}(d): \Dert^\bullet(\scM^\bullet)\to \Dert^{\bullet+1} (\scM^\bullet)  
\]
with ${ad}(d)_U (Y) = [d|_U, Y]$ for all $U\in \Open(M)$
and all $Y\in \Der^\bullet (\scM^\bullet |_M)$.
\end{corollary}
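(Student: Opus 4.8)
The plan is to obtain Corollary~\ref{cor:6.11} as an immediate special case of Proposition~\ref{prop:ad(x)}, taking for the degree-$k$ derivation $X$ the differential $d$ itself (so $k=1$). The first step is to make explicit that $d$ is eligible: by the definition of a presheaf of CDGAs, the differential is a collection $d=\{d_W\colon\scM^\bullet(W)\to\scM^{\bullet+1}(W)\}_{W\in\Open(M)}$ of maps of presheaves of $\scA$-modules each of which is a degree-$1$ graded derivation of $(\scM^\bullet(W),\wedge)$; i.e.\ $d$ is an element of $\Der^1(\scM^\bullet)\subset\Der^\bullet(\scM^\bullet)\equiv\Dert^\bullet(\scM^\bullet)(M)$ of degree $1$ (this is exactly Definition~\ref{def:gr-der_presh} read off the CDGA structure). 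Granting this, Proposition~\ref{prop:ad(x)} applies verbatim and produces a map of presheaves of real vector spaces
\[
{ad}(d)\colon \Dert^\bullet(\scM^\bullet)\to\Dert^{\bullet+1}(\scM^\bullet),
\]
whose component over an open set $U$ is $Y\mapsto\{[d_W,Y_W]\}_{W\in\Open(U)}$.

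The second step is purely cosmetic: to match this with the asserted formula ${ad}(d)_U(Y)=[d|_U,Y]$. Here $d|_U$ is the restriction of the presheaf-level derivation $d$ to the sub-poset $\Open(U)$, that is $d|_U=\{d_W\}_{W\in\Open(U)}\in\Der^1(\scM^\bullet|_U)$ (restricting a presheaf of CDGAs to an open subset again yields a presheaf of CDGAs, so this makes sense), and $[\,\cdot\,,\,\cdot\,]$ denotes the graded commutator in the graded Lie algebra $\Der^\bullet(\scM^\bullet|_U)$. By the lemma immediately preceding Proposition~\ref{prop:ad(x)}, this bracket is computed open-set-by-open-set, so $[d|_U,Y]=\{[d_W,Y_W]\}_{W\in\Open(U)}={ad}(d)_U(Y)$, as wanted.

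I do not expect any genuine obstacle: the corollary is a direct instantiation of Proposition~\ref{prop:ad(x)}. The only thing that requires a moment's thought is the bookkeeping identification in the first step --- recognizing that the datum $d$ carried by a presheaf of CDGAs is, upon unwinding the definitions, literally an object of $\Der^1(\scM^\bullet)$ to which the construction ${ad}(-)$ can be applied --- and this is immediate. In short, the entire content of the corollary is: ``apply ${ad}(-)$ of Proposition~\ref{prop:ad(x)} with $X=d$.''
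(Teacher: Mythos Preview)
Your proposal is correct and matches the paper's approach: the paper states Corollary~\ref{cor:6.11} without proof, as an immediate instantiation of Proposition~\ref{prop:ad(x)} with $X=d$ and $k=1$, which is precisely what you do.
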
  

\begin{remark} \label{rmrk:ad(d)}
In particular, since the presheaf $(\Lambda^\bullet\Omega^1_\scA,
\wedge, d)$ of
$\cin$-algebraic differential forms on a local $\cin$-ringed space
$(M,\scA)$ is a presheaf of CDGAs, Corollary~\ref{cor:6.11} implies
that we have a map of presheaves
\[
  {ad}(d): \Dert^\bullet(\Lambda^\bullet\Omega^1_\scA)\to
  \Dert^{\bullet+1} (\Lambda^\bullet\Omega^1_\scA) .
\]

\end{remark}

\begin{lemma} \label{lem:2.33june}
Let $(\scM^\bullet, \wedge, d)$ be a presheaf  of CDGAs over a
$\cin$-ringed spaces $(M,\scA)$ and $ {ad}(d):
\Dert^\bullet(\scM^\bullet)\to \Dert^{\bullet+1} (\scM^\bullet) $ the
corresponding map of presheaves of graded vector spaces (cf.\
Corollary~\ref{cor:6.11}).
Then \[
ad(d) \circ ad(d) =0.
\]
\end{lemma}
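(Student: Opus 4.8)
The plan is to verify the identity componentwise over the poset $\Open(M)$ and then reduce it, on each open set, to the standard fact that bracketing twice with a square-zero element of degree $1$ in a graded Lie algebra vanishes.

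First I would observe that ${ad}(d)\circ {ad}(d)$ is a map of presheaves $\Dert^\bullet(\scM^\bullet)\to \Dert^{\bullet+2}(\scM^\bullet)$ and that composition of maps of presheaves is computed componentwise, so it suffices to prove ${ad}(d)_U\circ {ad}(d)_U = 0$ for each $U\in \Open(M)$. By Corollary~\ref{cor:6.11}, on $Y\in \Der^\bullet(\scM^\bullet|_U)$ the map ${ad}(d)_U$ is $Y\mapsto [d|_U, Y]$, the graded commutator in the graded Lie algebra $\Der^\bullet(\scM^\bullet|_U)$ (Proposition~\ref{prop:3.15level1}). I would also record that, since $(\scM^\bullet|_U, \wedge, d|_U)$ is again a presheaf of CDGAs, each component $d_W$ (for $W\subseteq U$) squares to zero, hence $d|_U\circ d|_U = 0$ as a map of presheaves. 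With these two observations the lemma becomes the purely algebraic assertion: if $\partial$ is a degree-$1$ element of a real graded Lie algebra with $\partial\circ\partial = 0$, then $[\partial, [\partial, Y]] = 0$ for all $Y$, applied with $\partial = d|_U$.

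To finish the algebraic assertion I would expand $[\partial, [\partial, Y]]$ directly from $[X,Z] = X\circ Z - (-1)^{|X||Z|} Z\circ X$: writing $|Y| = k$, one has $[\partial, Y] = \partial\circ Y - (-1)^k Y\circ \partial$ of degree $k+1$, and then $[\partial, [\partial, Y]] = \partial\circ[\partial,Y] - (-1)^{k+1}[\partial,Y]\circ\partial$; upon substituting, the two terms proportional to $\partial\circ Y\circ\partial$ cancel by the Koszul sign rule and the remaining two terms are $\partial\circ\partial\circ Y$ and $-\,Y\circ\partial\circ\partial$, both zero because $\partial\circ\partial = 0$. Equivalently, $[\partial,\partial] = 2\,\partial\circ\partial = 0$, so the graded Jacobi identity of Proposition~\ref{prop:3.15level1} gives $2\,[\partial,[\partial,Y]] = [[\partial,\partial],Y] = 0$, and one divides by $2$ over $\R$.

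I do not expect a genuine obstacle: the statement is entirely formal. The only thing requiring a little care is keeping the presheaf bookkeeping consistent --- that ${ad}(d)$, the graded bracket of \eqref{eq:gr-comm}, and the composite ${ad}(d)\circ {ad}(d)$ are all defined componentwise over $\Open(M)$ --- but Proposition~\ref{prop:ad(x)} and Corollary~\ref{cor:6.11} are arranged exactly so that this reduction to the algebraic statement is automatic.
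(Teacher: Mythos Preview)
Your proposal is correct and follows essentially the same route as the paper: reduce to a fixed open set $U$, write $({ad}(d)_U\circ {ad}(d)_U)\,Y = [d|_U,[d|_U,Y]]$, expand via the definition of the graded commutator, and observe that the $d\circ d$ terms vanish while the two cross terms cancel. Your alternative via the graded Jacobi identity and $[\partial,\partial]=2\,\partial\circ\partial=0$ is a nice extra observation not in the paper, but the core argument is the same direct computation.
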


\begin{proof}
This is a computation.  We only need to use the facts that $d$ has degree 1 and that $d\circ
d =0$.  Here are the details. For any open set $U\in\Open(M)$ for any $k$ and any
derivation $Y\in \Der^k(\scM|_U)$
\begin{align*}
&(ad(d)_U \circ ad(d)_U ) \, Y = [d|_U, [d|_U, Y]] = d|_U\circ  [d|_U,
                                Y] -(-1)^{k+1}  [d|_U, Y] \circ d|_U\\
  &= d|_U\circ (d|_U\circ Y - (-1)^k Y\circ d|_U) +(-1)^k (d|_U\circ Y
    - (-1)^k Y\circ d|_U) \circ d|_U\\
  &= -(-1)^k d|_U\circ Y\circ d|_U + (-1)^k d|_U\circ Y\circ d|_U =0.
\end{align*}  
\end{proof}  

\begin{remark} \label{rmrk:2.90} \label{rmrk:ad(d)W}
For any $\cin$-ring $\scC$, the exterior derivative $d = d_\scC\in \Der^1
(\Lambda^\bullet \Omega^1_\scC)$ gives rise to
$ad(d_\scC): \Der^\bullet
(\Lambda^\bullet \Omega^1_\scC) \to (\Lambda^{\bullet
  +1}\Omega^1_\scC)$ just as in the case of presheaves (cf.\
Corollary~\ref{cor:6.11}).  Explicitly, for any $k$ and any  $Y\in \Der^k
(\Lambda^\bullet \Omega^1_\scC) $
\begin{equation} \label{eq:2.90.1}
ad(d_\scC)Y\equiv ad(d)Y = [d, Y] \equiv d\circ Y -(-1)^k Y\circ d.
\end{equation}
Consequently Proposition~\ref{prop:ad(x)} says that for any $\LCRS$
$(M, \scA)$, for any open set $U\subset M$ the $U$-component
\[
ad(d)_U: \Dert^\bullet(\Lambda ^\bullet\Omega^1_\scA )(U) \equiv \Dert^\bullet(\Lambda ^\bullet\Omega^1_\scA|_U)\to
\Dert^{\bullet+1} (\Lambda ^\bullet\Omega^1_\scA)(U) \equiv
\Dert^{\bullet+1} (\Lambda ^\bullet\Omega^1_\scA |_U)
\]  
is
\[
ad(d)_U =\{ ad(d_{\scA(W)})\}_{W\in \Open(U)},
\]  
where each $ad(d_{\scA(W)})$ is defined by \eqref{eq:2.90.1}.
\end{remark}

\section{Cartan calculus for $\cin$-rings} \label{sec:ring}

The goal of this section is to  prove
Theorem~\ref{prop:cartan_calc_level0}.   At the same time we do some
preparatory work for the proof of Theorem~\ref{main_thm}.  We start by
defining semi-conjugacy (i.e., relatedness) in the context of $\cin$-rings.

\begin{definition}
Let $f:\scC\to \scB$ be a map of $\cin$-rings.  A derivation $v\in
\cin\Der(\scC)$ is {\sf $f$-related} to a derivation $w\in
\cin\Der(\scB)$ if and only if
\[
  f\circ v = w\circ f.
\]  
\end{definition}  

The following definition will be useful in the construction of 
contraction and Lie derivative maps in the next section.
\begin{definition} \label{def:nat}
An $\R$-linear map $h_\scC: \cin\Der(\scC) \to \Der^k (\Lambda^\bullet
\Omega_\scC^1)$ is {\sf natural} in the $\cin$-ring $\scC$ if for any
map of $\cin$-rings $f:\scC\to \scB$ and for any
two derivations $v\in \cin\Der(\scC)$, $w\in \cin\Der(\scB)$ with the
property that $f\circ v = w\circ f$ (i.e., $v$ and $w$ are $f$-{ related}) we have
\[
\Lambda^\bullet (f) \circ h_\scC (v) = h_\scB (w) \circ\Lambda^\bullet (f).
\]  
\end{definition}

\begin{lemma} \label{lem:contr-for-rings}
For any $\cin$-ring $\scC$ there exists a unique $\scC$-linear map
\[
\iota_\scC:\cin\Der(\scC)\to \Der^{-1}(\Lambda^\bullet \Omega^1_\scC) ,
\]
the {\sf contraction},
so that its degree 1 component $\iota_\scC^{(1)}: \cin\Der(\scC)\to
\Hom(\Omega^1_\scC, \scC)$ is the inverse of $d^*: \Hom(\Omega^1_\scC,
\scC) \to \cin\Der(\scC)$ , $d^*\varphi:= \varphi \circ d$.

Moreover the map $\iota_\scC$ is natural in $\scC$ (Definition~\ref{def:nat}).
\end{lemma}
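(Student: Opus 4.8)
The plan is to realize $\iota_\scC$ as a composite of two isomorphisms that are already available, and then to read off its degree $1$ component, its uniqueness, and its naturality in turn.

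First I would assemble the two ingredients. By Remark~\ref{rmkr:cder-as-dual} the map $d^*\colon\Hom(\Omega^1_\scC,\scC)\to\cin\Der(\scC)$, $\varphi\mapsto\varphi\circ d_\scC$, is an isomorphism of $\scC$-modules. Second, as recalled in the introduction, a $\scC$-module homomorphism $\varphi\colon\Omega^1_\scC=\Lambda^1\Omega^1_\scC\to\scC=\Lambda^0\Omega^1_\scC$ extends uniquely to a degree $-1$ derivation $\varphi^\bullet$ of the CGA $(\Lambda^\bullet\Omega^1_\scC,\wedge)$, and $\varphi\mapsto\varphi^\bullet$ is an isomorphism $\Hom(\Omega^1_\scC,\scC)\xrightarrow{\ \simeq\ }\Der^{-1}(\Lambda^\bullet\Omega^1_\scC)$ of $\scC$-modules whose inverse is restriction to $\Lambda^1$; note that by Lemma~\ref{lem:1level0} every degree $-1$ derivation of $\Lambda^\bullet\Omega^1_\scC$ is automatically $\scC$-linear, so the target here is exactly the $\scC$-module $\Der^{-1}(\Lambda^\bullet\Omega^1_\scC)$ we want. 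I then define $\iota_\scC$ as the composite
\[
\iota_\scC\colon\ \cin\Der(\scC)\xrightarrow{\ (d^*)^{-1}\ }\Hom(\Omega^1_\scC,\scC)\xrightarrow{\ (-)^\bullet\ }\Der^{-1}(\Lambda^\bullet\Omega^1_\scC),
\]
a composite of $\scC$-linear maps, hence $\scC$-linear. Because $\varphi^\bullet$ restricts on $\Lambda^1$ to $\varphi$ itself, the degree $1$ component of $\iota_\scC(v)$ is $(d^*)^{-1}(v)$; that is, $\iota_\scC^{(1)}=(d^*)^{-1}$, the inverse of $d^*$.

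Uniqueness is then immediate and needs no $\scC$-linearity hypothesis: if $\iota'_\scC\colon\cin\Der(\scC)\to\Der^{-1}(\Lambda^\bullet\Omega^1_\scC)$ also has degree $1$ component $(d^*)^{-1}$, then for each $v$ the two degree $-1$ derivations $\iota'_\scC(v)$ and $\iota_\scC(v)$ have the same restriction to $\Lambda^1\Omega^1_\scC$, hence coincide, since a degree $-1$ derivation of $\Lambda^\bullet\Omega^1_\scC$ is determined by that restriction.

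For naturality, fix a morphism $f\colon\scC\to\scB$ of $\cin$-rings and derivations $v\in\cin\Der(\scC)$, $w\in\cin\Der(\scB)$ with $f\circ v=w\circ f$. The two maps $\Lambda^\bullet(f)\circ\iota_\scC(v)$ and $\iota_\scB(w)\circ\Lambda^\bullet(f)$ are $\R$-linear (in particular additive) maps $\Lambda^\bullet\Omega^1_\scC\to\Lambda^{\bullet-1}\Omega^1_\scB$, so it suffices to check they agree on the elements $a_0\,d_\scC a_1\wedge\cdots\wedge d_\scC a_k$, which additively generate $\Lambda^\bullet\Omega^1_\scC$ since $\Omega^1_\scC$ is generated as a $\scC$-module by $\{d_\scC a\}_{a\in\scC}$. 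Expanding $\iota_\scC(v)$ on such an element by the degree $-1$ Leibniz rule, then applying the explicit formula for $\Lambda^\bullet(f)$ from Proposition~\ref{prop:6.5}, then using $f(v(a_j))=(f\circ v)(a_j)=(w\circ f)(a_j)=w(f(a_j))$, yields exactly the expansion of $\iota_\scB(w)$ applied to $\Lambda^\bullet(f)(a_0\,d_\scC a_1\wedge\cdots\wedge d_\scC a_k)=f(a_0)\,d_\scB f(a_1)\wedge\cdots\wedge d_\scB f(a_k)$. Equivalently, and avoiding signs altogether, both composites are ``$\Lambda^\bullet(f)$-twisted'' degree $-1$ derivations of the same Leibniz type, hence are determined by their restriction to $\Lambda^1\Omega^1_\scC$, where they agree by $f$-relatedness. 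I expect no genuine obstacle here: everything substantive is contained in Remark~\ref{rmkr:cder-as-dual}, the extension isomorphism recalled in the introduction, Lemma~\ref{lem:1level0}, and Proposition~\ref{prop:6.5}; the only point demanding attention is the Koszul-sign bookkeeping in the naturality computation, which the twisted-derivation formulation sidesteps.
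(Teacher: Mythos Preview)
Your proposal is correct and follows essentially the same approach as the paper: define $\iota_\scC$ as the composite of $(d^*)^{-1}$ with the unique extension of a linear map $\Omega^1_\scC\to\scC$ to a degree $-1$ derivation of $\Lambda^\bullet\Omega^1_\scC$, read off the degree $1$ component, and deduce uniqueness from the fact that a degree $-1$ derivation is determined by its restriction to $\Lambda^1$. The only difference is in the naturality check: the paper reduces to degree $1$ (via the same twisted-derivation reasoning you sketch) and then establishes commutativity of the degree $1$ square by invoking the universal property of $d_\scC$ directly --- both $f\circ\imath(v)$ and $\imath(w)\circ\Lambda^1 f$ are $\scC$-linear maps $\Omega^1_\scC\to\scB$ whose precomposition with $d_\scC$ equals $f\circ v=w\circ f$, hence they coincide --- rather than expanding on generators; this avoids all sign bookkeeping, but your generator computation is equally valid.
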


\begin{proof}
For any $\scC$-module $\scM$ and any $\scC$-linear map
$\varphi:\scM\to \scC$ and any $k>1$ consider the map 
\[
  \tilde{\varphi}^{(k)}:\overbrace{\scM\times \cdots \times \scM}^{k} \to
  \Lambda^{k-1} \scM\]
given by 
\[
\tilde{\varphi}^{(k)}(\alpha_1,\ldots,\alpha_k)  = \sum_s
(-1)^{s+1} \varphi(\alpha_s) \alpha_1\wedge \ldots\wedge
\widehat{\alpha_s} \wedge \ldots\wedge \alpha_k
\]
for all $\alpha_1,\ldots, \alpha_k\in \scM$.

Since $\tilde{\varphi}^{(k)}$ is $\scC$-linear in each slot and alternating, it descends to a
$\scC$-linear map 
\[
{\varphi}^{(k)}: \Lambda^k \scM \to \Lambda^{k-1} \scM
\]
with the property that
\[ 
  {\varphi}^{(k)}(\alpha_1\wedge \ldots \wedge \alpha_k) =
  \sum_i
(-1)^{i+1} \varphi(\alpha_i) \alpha_1\wedge \ldots\wedge
\widehat{\alpha_i} \wedge \ldots\wedge \alpha_k
\] 
 Moreover, 
for all $k,\ell >0$, and all
$\alpha_1,\ldots, \alpha_k, \alpha_{k+1}, \ldots, \alpha_{k+\ell}$
\[
\begin{split}  
{\varphi}^{(k+\ell)}(\alpha_1\wedge \ldots \wedge
\alpha_{k+\ell}) = \left(\varphi^{(k)}(\varphi) (\alpha_1\wedge \ldots
  \wedge \alpha_k) \right) \wedge ( \alpha_{k+1}\wedge \ldots\wedge
\alpha_{k+\ell} )\\ + (-1) ^{k(-1)} (\alpha_1\wedge \ldots
  \wedge \alpha_k )\wedge \left(\varphi^{(\ell)} (\alpha_{k+1}\wedge \ldots
    \wedge \alpha_{k+\ell}) \right) .
\end{split}  
\]
We now set $\varphi^{(1)} := \varphi$, $\Lambda^k \scM =0$ for $k<0$
and $\varphi^{(k)} = 0$ for $k<0$.
Then  $\varphi^\bullet:= \{ \varphi^{(k)}\} _{k\in \Z}$ is a degree -1
derivation of the CGA $\Lambda ^\bullet \scM$. 

If $\tau^\bullet =\{\tau^{(k)} \}_{k\in \Z}$ is another degree -1
derivation of $\Lambda^\bullet \scM$ with $\tau^{(1)} = \varphi$, 
then for
any $k>0$, any
$\alpha_1,\ldots,\alpha_k\in \scM$
\[
  \tau^{(k)}(\alpha_1\wedge \ldots \wedge \alpha_k) =
  \sum_i
(-1)^{i+1} \varphi(\alpha_i) \alpha_1\wedge \ldots\wedge
\widehat{\alpha_i} \wedge \ldots\wedge \alpha_k 
=  \varphi^{(k)}(\alpha_1\wedge \ldots \wedge \alpha_k).
\]
Hence the derivation $\varphi^\bullet$ is uniquely determined by its
degree 1 component $\varphi^{(1)}$.

We are now in position to construct the desired contraction
$\iota_\scC:\cin\Der(\scC)\to \Der^{-1}(\Lambda^\bullet
\Omega^1_\scC)$.   By the universal property of the derivation
$d_\scC:\scC\to \Omega^1_\scC$,
given a derivation $v\in \cin\Der(\scC)$ there exists a unique
$\cin$-linear map $\imath(v):\Omega^1_\scC\to \scC$ so that $\imath(v)
\circ d_\scC = v$.  Therefore there exists a unique degree -1
derivation $\imath(v)^\bullet $ of $\Lambda^\bullet \Omega^1_\scC$ with
$\imath (v)^{(1)} = \imath(v)$.  We define the desired map
$\iota_\scC:\cin\Der(\scC)\to \Der^{-1} (\Lambda^\bullet \Omega^1_\scA)$
by
\[
\iota_\scC(v): = \imath (v)^\bullet = \{ \imath(v)^{(k)} \}_{k\in \Z}
\]  
for all $v\in \cin\Der(\scC)$.  It is easy to see that $\iota_\scC$ is
$\scC$-linear.\\[4pt]
Next suppose $f:\scC\to \scB$ is a map of $\cin$-rings, and that the
derivations $v\in \cin\Der(\scC)$, $w\in \cin\Der(\scB)$ are
$f$-related.  Since the maps $\iota_\scC (v) $ and $\iota_\scB (w)$ are
uniquely determined by their degree 1 components (which are
$\imath(v)$ and $\imath(w)$, respectively), it is enough to
prove that the diagram
\begin{equation}\label{eq:3.3.1june}
\xy
(-15,10)*+{\Omega^1_\scC}="1";
(15,10)*+{\scC}="2";
(-15,-5)*+{\Omega^1_\scB }="3";
(15,-5)*+{\scB}="4";
{\ar@{->}^{\imath(v)} "1";"2"};
{\ar@{->}_{\Lambda^1 f} "1";"3"};
{\ar@{->}^{f} "2";"4"};
{\ar@{->}_{\imath(w)} "3";"4"};
\endxy
\end{equation}
commutes.  Since $f$ is a $\cin$-ring homomorphism, $w\circ f = f\circ
v:\scC\to \scB$ is a $\cin$-ring derivation of $\scC$ with values in
$\scB$ (thought of as a $\scC$-module by way of $f$).  By the universal
property of $d_\scC:\scC \to \Omega^1_\scC$ there exists a unique
$\scC$-linear map $\psi:\Omega^1_\scC\to \scB$  so that
\begin{equation} \label{eq:3.3.2june}
f\circ v = \psi \circ d_\scC = w \circ f.
\end{equation}

By definition of the contraction $\imath(v)$ we have
\[
f\circ v =
f\circ \imath(v)\circ d_\scC.
\]
By definitions of $\imath(w)$ and of the map $\Lambda^1f$
\[
w\circ f  =(\imath(w) \circ d_\scB) \circ f = \imath(w) \circ
\Lambda^1 f \circ d_\scC.
\]  
Therefore by the uniqueness of the map $\psi$ satisfying
\eqref{eq:3.3.2june},
\[
f \circ \imath(v) = \imath(w) \circ f,
 \] 
i.e., \eqref{eq:3.3.1june} commutes.
\end{proof}

\begin{definition}[Lie derivative map] \label{def:3.4june}
  Let $\scC$ be a $\cin$-ring.  We define the {\sf Lie derivative} map  
\[
\cL_\scC: \cDer(\scC) \to \Der^{0} (\Lambda^\bullet\Omega^1_\scC),
\]
by 
\[
\cL_\scC(v) :=[d, \iota_\scC(v)] 
\]  
for all $v\in \cDer(\scC)$, where on the right $[d, \iota_\scC(v)] $
is the graded commutator of the degree +1 derivation $d$ and the
degree -1 derivation $\iota_\scC(v)$.  That is,
\[
\cL_\scC(v)  := d\circ \iota_\scC(v) + \iota_\scC(v)\circ d.
\]
\end{definition}

\begin{remark} 
Note that since $d$ is $\R$-linear, so is the Lie derivative
$\cL_\scC$.   The Lie derivative is not $\scC$-linear in general.
\end{remark}

The following lemma will be used in the next section.
\begin{lemma} \label{lem:Lie_der-for-rings}
The Lie derivative map $\cL_\scC: \cDer(\scC) \to \Der^{0}
(\Lambda^\bullet\Omega^1_\scC)$ is natural in $\scC$.
\end{lemma}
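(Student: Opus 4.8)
The statement to prove is Lemma~\ref{lem:Lie_der-for-rings}: that the Lie derivative map $\cL_\scC : \cDer(\scC) \to \Der^0(\Lambda^\bullet\Omega^1_\scC)$ is natural in $\scC$ in the sense of Definition~\ref{def:nat}. The plan is to reduce everything to the naturality of the contraction $\iota_\scC$ (Lemma~\ref{lem:contr-for-rings}) and the naturality of the exterior derivative $d$, which is built into Proposition~\ref{prop:6.5} (the statement that $\Lambda^\bullet(f)$ is a map of CDGAs, hence commutes with $d$). So fix a map of $\cin$-rings $f:\scC\to\scB$ and derivations $v\in\cDer(\scC)$, $w\in\cDer(\scB)$ that are $f$-related, i.e.\ $f\circ v = w\circ f$. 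Write $F := \Lambda^\bullet(f) : \Lambda^\bullet\Omega^1_\scC \to \Lambda^\bullet\Omega^1_\scB$. I need to show $F\circ\cL_\scC(v) = \cL_\scB(w)\circ F$.

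First I would record the two ingredients as equations between maps $\Lambda^\bullet\Omega^1_\scC \to \Lambda^\bullet\Omega^1_\scB$. By Lemma~\ref{lem:contr-for-rings}, naturality of the contraction gives
\begin{equation*}
F\circ \iota_\scC(v) = \iota_\scB(w)\circ F .
\end{equation*}
By Proposition~\ref{prop:6.5}, $F$ is a morphism of CDGAs, so it intertwines the differentials:
\begin{equation*}
F\circ d_\scC = d_\scB\circ F ,
\end{equation*}
where I write $d_\scC$, $d_\scB$ for the degree $+1$ derivations on the respective exterior algebras (Theorem~\ref{thm:6.2}).

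Then I would simply expand $\cL$ by its definition (Definition~\ref{def:3.4june}), $\cL_\scC(v) = d_\scC\circ\iota_\scC(v) + \iota_\scC(v)\circ d_\scC$, and push $F$ through from the left:
\begin{align*}
F\circ\cL_\scC(v) &= F\circ d_\scC\circ\iota_\scC(v) + F\circ\iota_\scC(v)\circ d_\scC\\
&= d_\scB\circ F\circ\iota_\scC(v) + \iota_\scB(w)\circ F\circ d_\scC\\
&= d_\scB\circ\iota_\scB(w)\circ F + \iota_\scB(w)\circ d_\scB\circ F\\
&= \bigl(d_\scB\circ\iota_\scB(w) + \iota_\scB(w)\circ d_\scB\bigr)\circ F = \cL_\scB(w)\circ F ,
\end{align*}
using the contraction-naturality equation on the first summand's tail and the $d$-intertwining on the second summand, then reversing the roles. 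This is exactly the condition in Definition~\ref{def:nat}, so the proof is complete.

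The only mild subtlety — and it is the step I would be most careful about — is making sure that $w\circ f = f\circ v$ really is the hypothesis that feeds Lemma~\ref{lem:contr-for-rings}: that lemma's naturality clause is stated precisely for $f$-related pairs, so no extra work is needed there. There is no genuine obstacle; the computation is a two-line diagram chase once the two naturality inputs are in hand, and I would present it essentially as above, perhaps noting explicitly that $\Lambda^\bullet(f)$ commuting with $d$ is the content of "$\Lambda^\bullet(f)$ is a morphism of CDGAs" in Proposition~\ref{prop:6.5}.
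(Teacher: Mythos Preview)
Your proposal is correct and follows essentially the same approach as the paper: both use the naturality of the contraction (Lemma~\ref{lem:contr-for-rings}) together with the fact that $\Lambda^\bullet(f)$ commutes with the differentials (Proposition~\ref{prop:6.5}), then expand the definition of $\cL_\scC(v)$ and push $\Lambda^\bullet(f)$ through each summand. The paper's proof is slightly more terse, compressing your four-line display into a single chain of equalities, but the logic is identical.
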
  

\begin{proof}
Suppose $f:\scC\to \scB$ is a map of $\cin$-rings and $v\in
\cin\Der(\scC)$, $w\in \cin\Der(\scB)$ are $f$-related.     Since
$\iota_\scC$ is natural in $\scC$ (Lemma~\ref{lem:contr-for-rings}),
$\Lambda^\bullet (f) \circ \iota _\scC(v) = \iota_\scB(w) \circ
\Lambda^\bullet (f)$.  The map $\Lambda^\bullet
(f):\Lambda^\bullet \Omega^1_\scC\to \Lambda^\bullet \Omega^1_\scB$
commutes with the differentials: $\Lambda^\bullet
(f) \circ d = d\circ \Lambda^\bullet
(f)$.  By Lemma~\ref{lem:contr-for-rings} the map $\Lambda^\bullet
(f)$ commutes with contractions as well.
Consequently
\[
\Lambda^\bullet (f) \circ \cL_\scC(v) = \Lambda^\bullet (f) \circ (
d\circ \iota_\scC(v) + \iota_\scC(v)\circ d) =  (d\circ \iota_\scB(w) +
\iota_\scB(w)\circ d) \circ \Lambda^\bullet (f)  = \cL_\scB(w) \circ \Lambda^\bullet (f).
\]
\end{proof}

We are now in position to prove Theorem~\ref{prop:cartan_calc_level0},
the existence of Cartan calculus for a $\cin$-ring $\scC$.  Again, to
reduce the clutter we drop the subscript $\, _\scC$ from the
contraction and the Lie derivative maps.

\begin{proof}[Proof of Theorem~\ref{prop:cartan_calc_level0}]

({v}) holds by definition of $\cL(v)$; this  identity is included in the
statement of the proposition to
parallel the usual   formulation of Cartan calculus.

Since $\iota(v)$, $\iota(w)$ are derivations of degree -1,  their graded
commutator $ [\iota(v), \iota(w)] $
is a derivation of degree -2.     In particular for any $\alpha
\in \Omega^1_\scC$ we have $[\iota(v), \iota(w)]\alpha \in
\Lambda^{-1} \Omega^1_\scC =0$.  Since $\Lambda^2\Omega^1_\scC$ is
generated, as an $\scC$-module,  by the set $\{\alpha \wedge \beta\mid
\alpha, \beta \in \Omega^1_\scC\}$ and since
\[
[\iota(v),\iota(w)] (\alpha \wedge \beta) =0
\]  
for all $\alpha, \beta \in \Omega^1_\scC$,
the restriction $[\iota(v),
\iota(w)]|_{\Lambda^2\Omega^1_\scC}$ is $0$.  This, in turn, implies
that $  [\iota(v), \iota(w)] |_{\Lambda^k\Omega^1_\scC} =0$
for all $k$ and thereby  proves  ({iv}).

Since $d\circ d =0$,
\[  
\cL(v) \circ d =( d\circ \iota(v) + \iota(v) \circ d)  \circ d= d\circ
\iota(v) \circ d = d\circ \cL(v),
\]
which proves  ({i}).

To prove  ({ii}) note that both sides are degree 0 derivations
that commute with $d$.   Therefore it is enough to check that both
sides agree on $\Lambda^0\Omega^1_\scC  = \scC$.  For any $a\in \scC$
and any $u\in \Der(\scC)$, $\iota(u)da = u(a)$ by the universal
property of $d:\scC\to \Omega^1_\scC$, or, if you prefer, by the
definition of $\iota$.   Consequently
\[
  \cL(u)a =  d \iota(u) a + \iota (u) da  =d( 0) + u(a) = u(a). 
\]
Therefore
\[
\cL({[v,w]})a= [v,w]a = v(w(a)) - w(v(a)) = \cL(v)(\cL(w)a) - \cL(w
)(\cL(v) a) =[\cL(v), \cL(w)] a.
\]

It remains to prove (iii).
Since $\cL(v)$ is a derivation  of degree 0 and $\iota(w)$ is a
derivation of degree $-1$ their commutator $[\cL(v) , \iota(w)]$
is a derivation of degree $-1$ hence is $\scC$-linear by
Lemma~\ref{lem:1level0}. 
The module  $\Omega^1_\scC$ of K\"ahler differentials is generated, as an $\scC$-module, by the set
$\{db\}_{b\in \scC}$. A computation shows that for any $b\in\scC$
\[
[\cL(v) , \iota(w)] (db) = v(w(b))- w(v(b)) = \iota([v,w]) db.
\]  
Hence $ [\cL(v) , \iota(w)]
|_{\Omega^1_\scC} = \iota([v,w]) |_{\Omega^1_\scC}$.  An easy 
argument shows that if any two degree -1 derivations agree on
$\Omega^1_\scC$ then they agree on all of $\Lambda^\bullet \Omega^1_\scC$,
and {(iii)} follows.
\end{proof}

\section{Cartan calculus for local $\cin$-ringed spaces} \label{sec:sheaf}

The goal of this section is to prove Theorem~\ref{main_thm}.  We start
with some technical observations.

\begin{lemma}\label{lem:derk}
Fix an integer $k\in \Z$. Suppose that for any $\cin$-ring $\scC$ we have an $\R$-linear map
 \[ 
h_\scC: \cin\Der(\scC) \to \Der^k (\Lambda^\bullet \Omega_\scC^1)
\]
which is natural in $\scC$ (Definition~\ref{def:nat}).   Then for any
local $\cin$-ringed space $(M, \scA)$ there is a unique map of
real vector spaces
\[
H_M:  \cin\Der(\scA) \to \Der^k (\Lambda^\bullet \Omega_\scA^1)
\]
with
\[
H_M (v)  = \{ h_{\scA(U)}(v_U)\}_{U\in \Open(M)}.
\]
Moreover if each map $h_\scC $ is a map of $\scC$-modules, then $H_M$
is $\scA(M)$-linear.  
\end{lemma}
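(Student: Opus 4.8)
The plan is to define $H_M$ by the stated formula and check it is well posed. Given a global $\cin$-derivation $v=\{v_U\}_{U\in\Open(M)}$ of the sheaf $\scA$, set $H_M(v):=\{h_{\scA(U)}(v_U)\}_{U\in\Open(M)}$. Each $v_U$ is a $\cin$-derivation of the $\cin$-ring $\scA(U)$, so by hypothesis $h_{\scA(U)}(v_U)$ is a degree $k$ graded derivation of the CGA $\Lambda^\bullet\Omega^1_{\scA(U)}$. Thus the only thing to verify is that the family $\{h_{\scA(U)}(v_U)\}_U$ is compatible with the restriction maps of the presheaf $\Lambda^\bullet\Omega^1_\scA$, i.e.\ that
\[
\Lambda^\bullet(\rho^U_V)\circ h_{\scA(U)}(v_U)=h_{\scA(V)}(v_V)\circ\Lambda^\bullet(\rho^U_V)
\]
for every pair of open sets $V\subseteq U$, where $\rho^U_V:\scA(U)\to\scA(V)$ is the restriction map of the structure sheaf and $\Lambda^\bullet(\rho^U_V)$ is the induced morphism of de Rham complexes (Proposition~\ref{prop:6.5}); by construction of the presheaf $\Lambda^\bullet\Omega^1_\scA$, this $\Lambda^\bullet(\rho^U_V)$ is exactly its restriction map.

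The substantive step --- and, I expect, the only real obstacle --- is to recognize this identity as an instance of the naturality hypothesis on $h$. Because $v$ is a morphism of presheaves, the square $\rho^U_V\circ v_U=v_V\circ\rho^U_V$ commutes; that is, $v_U$ and $v_V$ are $\rho^U_V$-related in the sense of the definition of $f$-relatedness preceding Definition~\ref{def:nat}. Applying the naturality of $h_\scC$ in $\scC$ (Definition~\ref{def:nat}) with $f=\rho^U_V:\scA(U)\to\scA(V)$ and the related pair $v_U,v_V$ yields precisely the displayed equality. Hence $H_M(v)$ is a well-defined element of $\Der^k(\Lambda^\bullet\Omega^1_\scA)$; and since the formula determines $H_M(v)$ on every open set $U$, $H_M$ is the unique map with the asserted property.

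It then remains to record the formal properties. $\R$-linearity of $H_M$ is immediate: the vector-space operations on $\cin\Der(\scA)$ and on $\Der^k(\Lambda^\bullet\Omega^1_\scA)$ are both defined componentwise over $\Open(M)$ (Definition~\ref{def:3.2level1}), and each $h_{\scA(U)}$ is $\R$-linear. Finally, suppose each $h_\scC$ is $\scC$-linear. For $a\in\scA(M)$ and $v\in\cin\Der(\scA)$ one has $(av)_U=a|_U\,v_U$ by Definition~\ref{def:3.2level1}, and $a|_U=\rho^M_U(a)$ lies in $\scA(U)$, so $h_{\scA(U)}\bigl((av)_U\bigr)=a|_U\,h_{\scA(U)}(v_U)$; since the $\scA(M)$-action on $\Der^k(\Lambda^\bullet\Omega^1_\scA)$ is likewise componentwise, this gives $H_M(av)=a\cdot H_M(v)$, i.e.\ $H_M$ is $\scA(M)$-linear.
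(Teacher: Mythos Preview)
Your proof is correct and follows essentially the same approach as the paper: both define $H_M$ by the stated formula, verify well-definedness by recognizing that the compatibility condition with restriction maps is precisely an instance of the naturality hypothesis (since $v_U$ and $v_V$ are $\rho^U_V$-related), and then read off $\R$- and $\scA(M)$-linearity componentwise. Your write-up is slightly more explicit about uniqueness and the module-linearity computation, but the argument is the same.
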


\begin{proof}
For any $v\in \cin\Der(\scA)$ the set    $\{ h_{\scA(U)}(v_U)\}_{U\in
  \Open(M)}$ is an element of $\Der^k (\Lambda^\bullet \Omega_\scA^1)$
if and only if the diagram
\[
\xy
(-20,10)*+{\Lambda^\bullet\Omega^1_{\scA(U))}}="1";
(20,10)*+{\Lambda^{\bullet+1}\Omega^1_{\scA(U))}}="2";
(-20,-5)*+{\Lambda^\bullet\Omega^1_{\scA(V))}}="3";
(20,-5)*+{\Lambda^{\bullet+1}\Omega^1_{\scA(V))}}="4";
{\ar@{->}^{h_{\scA(U)}(v_U)} "1";"2"};
{\ar@{->}_{\Lambda^\bullet(^\scA\!\rho_V^U)} "1";"3"};
{\ar@{->}^{\Lambda^\bullet(^\scA\!\rho_V^U)} "2";"4"};
{\ar@{->}_{h_{\scA(V)}(v_V)} "3";"4"};
\endxy
\]
commutes   for all $V, U\in \Open(M)$ with $V\subset U$.  But $v=
\{v_U\}_{U\in \Open(M)}\in \cin\Der(\scA)$ means that
\[
\xy
(-10,10)*+{\scA(U)}="1";
(10,10)*+{\scA(U)}="2";
(-10,-5)*+{\scA(V)}="3";
(10,-5)*+{\scA(V)}="4";
{\ar@{->}^{v_U} "1";"2"};
{\ar@{->}_{^\scA\!\rho_V^U} "1";"3"};
{\ar@{->}^{^\scA \!\rho_V^U} "2";"4"};
{\ar@{->}_{v_V} "3";"4"};
\endxy
\]
commutes for all $V\subset U\subset M$.  By assumption on the 
collection of maps $\{ h_{\scA(U)}\} _{U\in \Open(M)}$
\[
h_{\scA(V)}(v_V)\circ \Lambda^\bullet(^\scA\rho_V^U) =
\Lambda^\bullet(^\scA\rho_V^U) \circ h_{\scA(U)} (v_U)
\]
for all $V\subset U$.   Therefore $H_M(v) =\{ h_{\scA(U)}(v_U)\}_{U\in
  \Open(M)}$ does lie in $\Der^k (\Lambda^\bullet \Omega_\scA^1)$.
Since each map $h_{\scA(U)}$ is $\R$-linear, $H_M$ is $\R$-linear.  If
each map $h_{\scA(U)}$ is $\scA(U)$ linear then $H_M$ is $\scA(M)$-linear.
\end{proof}  

\begin{corollary} \label{cor:4.3june} We use the notation of Lemma~\ref{lem:derk}.\\[2pt]
(a)\quad For any open set $U\subset M$ we have an $\R$-linear map
\[
H_U :  \cin\Der(\scA|_U) \to \Der^k (\Lambda^\bullet \Omega_\scA^1|_U)
\]
If each $h_\scC$ is $\scC$-linear then $H_U$ is $\scA(U)$-linear.\\[2pt]
\noindent (b)\quad For any pair $V, U\in \Open(M)$ with $V\subset U$
the diagram
\begin{equation} \label{eq:4.3.1june}
\xy
(-20,10)*+{\cin\Der(\scA|_U)}="1";
(20,10)*+{\Der^{k}(\Lambda^\bullet\Omega^1_{\scA}|_U)}="2";
(-20,-5)*+{\cin\Der(\scA|_U)}="3";
(20,-5)*+{\Der^{k}(\Lambda^{\bullet}\Omega^1_{\scA}|_V)}="4";
{\ar@{->}^{H_U} "1";"2"};
{\ar@{->}_{\rho_V^U} "1";"3"};
{\ar@{->}^{\rho_V^U} "2";"4"};
{\ar@{->}_{H_V} "3";"4"};
\endxy
\end{equation}
commutes.
\end{corollary}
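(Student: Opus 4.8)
The plan is to deduce both statements from Lemma~\ref{lem:derk} itself, applied not to $(M,\scA)$ but to each open subspace $(U,\scA|_U)$. First I would observe that for every open $U\subset M$ the pair $(U,\scA|_U)$ is again a local $\cin$-ringed space, since the stalks of $\scA|_U$ at points of $U$ coincide with the corresponding stalks of $\scA$ and are therefore local $\cin$-rings. Moreover the presheaf of $\cin$-algebraic de Rham forms of $\scA|_U$ is literally the restriction $\Lambda^\bullet\Omega^1_\scA|_U$ of the presheaf $\Lambda^\bullet\Omega^1_\scA$: both assign $\Lambda^\bullet\Omega^1_{\scA(W)}$ to an open $W\subset U$, with the same restriction maps (those induced by the restriction maps of $\scA$ via Proposition~\ref{prop:6.5}). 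Hence Lemma~\ref{lem:derk} applied to $(U,\scA|_U)$ and the given natural family $\{h_\scC\}$ yields an $\R$-linear map
\[
H_U\colon \cin\Der(\scA|_U) \to \Der^k(\Lambda^\bullet\Omega^1_\scA|_U), \qquad H_U(v) = \{h_{\scA(W)}(v_W)\}_{W\in\Open(U)},
\]
which is $\scA(U)$-linear whenever every $h_\scC$ is $\scC$-linear. This is exactly part (a) (and for $U=M$ it recovers $H_M$).

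For part (b) everything is forced by the explicit object-wise formulas, so the square commutes on the nose. An element $v\in\cin\Der(\scA|_U)$ is a compatible family $v=\{v_W\}_{W\in\Open(U)}$, and, by the definition of the restriction maps of the presheaf $\cin\Dert(\scA)$ (Definition~\ref{def:3.2level2}), $\rho^U_V(v) = \{v_W\}_{W\in\Open(V)}$; likewise the restriction map on the presheaf $\Dert^k(\Lambda^\bullet\Omega^1_\scA)$ simply discards the components indexed by opens not contained in $V$. Therefore
\[
\rho^U_V\bigl(H_U(v)\bigr) = \rho^U_V\bigl(\{h_{\scA(W)}(v_W)\}_{W\in\Open(U)}\bigr) = \{h_{\scA(W)}(v_W)\}_{W\in\Open(V)} = H_V\bigl(\{v_W\}_{W\in\Open(V)}\bigr) = H_V\bigl(\rho^U_V(v)\bigr),
\]
which is precisely the commutativity of \eqref{eq:4.3.1june}.

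I do not expect a genuine obstacle here; the only points requiring care are bookkeeping ones. First, one should apply Lemma~\ref{lem:derk} to the restricted space $(U,\scA|_U)$ rather than trying to ``restrict'' $H_M$ directly, since a priori there is no restriction map on $\cin\Der(\scA(U))$ — this is exactly the difficulty flagged in the introduction. Second, one must check that $H_U(v)$ genuinely defines a graded derivation of the presheaf $\Lambda^\bullet\Omega^1_\scA|_U$, i.e.\ an element of $\Der^k(\Lambda^\bullet\Omega^1_\scA|_U)$ and not merely an element of the product of the $\Der^k(\Lambda^\bullet\Omega^1_{\scA(W)})$; but this compatibility is precisely what the naturality of $h_\scC$ delivers and is already established inside the proof of Lemma~\ref{lem:derk}. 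Once these two points are in place the corollary is immediate.
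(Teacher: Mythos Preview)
Your proof is correct and follows essentially the same approach as the paper: part (a) is obtained by applying Lemma~\ref{lem:derk} to the local $\cin$-ringed space $(U,\scA|_U)$ after noting that $\Lambda^\bullet\Omega^1_\scA|_U=\Lambda^\bullet\Omega^1_{\scA|_U}$, and part (b) is the same one-line computation with the explicit componentwise formula for $H_U$. Your additional remarks about why $(U,\scA|_U)$ is an $\LCRS$ and about the bookkeeping involved are accurate elaborations of points the paper leaves implicit.
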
  
\begin{proof}
For any $U\in \Open(M)$, $\Lambda^\bullet \Omega^1_\scA|_U=
\Lambda^\bullet \Omega^1_{\scA|_U}$.  Consequently (a) follows by
applying Lemma~\ref{lem:derk} to the $\LCRS$ $(U,\scA|_U)$.

To prove (b) observe that
\[
\rho^U_V (H_U(v)) = \rho^U_V (\{ h_{\scA(W)}(v_W)\}_{W\in
  \Open(U)}) = \{ h_{\scA(W)}(v_W)\}_{W\in
  \Open(V)} = H_V (\rho^U_V (v)).
\]  
\end{proof}  

\begin{corollary} \label{cor:4.3}
Suppose that for any $\cin$-ring $\scC$ we have an $\R$-linear map
 \[ 
h_\scC: \cin\Der(\scC) \to \Der^k (\Lambda^\bullet \Omega_\scC^1)
\]
which is natural in $\scC$ (Definition~\ref{def:nat}).   Then for any
local $\cin$-ringed space $(M, \scA)$ there is a unique map of
presheaves of vector spaces
\[
\scH: \cin\Dert(\scA) \to \Dert^k (\Lambda^\bullet \Omega_\scA^1)
\]
with $\scH_U = H_U$ for each $U\in \Open(M)$. Here
\[
  H_U: \cin\Dert(\scA) (U) \equiv \cin\Der(\scA|_U) \to \Der^k
  (\Lambda^\bullet \Omega_\scA^1|_U) \equiv \Dert^k (\Lambda^\bullet
  \Omega_\scA^1)(U)
\]
are the maps constructed in Corollary~\ref{cor:4.3june}.

Moreover if each map $h_\scC$ is $\scC$-linear, then $\scH$ is a map
of presheaves of $\scA$-modules.
\end{corollary}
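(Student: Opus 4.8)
The plan is to build $\scH$ componentwise from the maps $H_U$ already produced in Corollary~\ref{cor:4.3june}, so that essentially all the work has been done and only bookkeeping remains. First I would set, for each open $U\subset M$, the component $\scH_U := H_U$, where $H_U\colon \cin\Dert(\scA)(U)\equiv\cin\Der(\scA|_U)\to\Der^k(\Lambda^\bullet\Omega^1_{\scA|_U})\equiv\Dert^k(\Lambda^\bullet\Omega^1_\scA)(U)$ is the $\R$-linear map obtained by applying Lemma~\ref{lem:derk} to the local $\cin$-ringed space $(U,\scA|_U)$; here I use the definitional identifications $\Lambda^\bullet\Omega^1_\scA|_U=\Lambda^\bullet\Omega^1_{\scA|_U}$ and $\cin\Dert(\scA)(U)=\cin\Der(\scA|_U)$. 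Concretely $\scH_U(v)=\{h_{\scA(W)}(v_W)\}_{W\in\Open(U)}$ for $v\in\cin\Der(\scA|_U)$.

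The second step is to check that the family $\{\scH_U\}_{U\in\Open(M)}$ commutes with the restriction maps of the two presheaves, i.e.\ that for every pair $V\subset U$ of opens the square with horizontal arrows $\scH_U,\scH_V$ and vertical arrows $\rho^U_V$ commutes. But this is exactly diagram~\eqref{eq:4.3.1june} of Corollary~\ref{cor:4.3june}(b): since both restriction maps simply discard the components indexed by opens not contained in $V$, one has $\rho^U_V(\scH_U(v))=\{h_{\scA(W)}(v_W)\}_{W\in\Open(V)}=\scH_V(\rho^U_V(v))$. Hence $\scH$ is a well-defined map of presheaves of real vector spaces, and it is unique since the requirement $\scH_U=H_U$ pins down every component.

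For the module assertion, if each $h_\scC$ is $\scC$-linear then Corollary~\ref{cor:4.3june}(a) says each component $\scH_U=H_U$ is $\scA(U)$-linear; a map of presheaves all of whose components are module homomorphisms is by definition a map of presheaves of modules, so $\scH$ is then a map of presheaves of $\scA$-modules. I do not expect any genuine difficulty here: the only point requiring care is verifying that the source and target presheaves of $\scH$ really are the presheaves whose sections over $U$ are $\cin\Der(\scA|_U)$ and $\Der^k(\Lambda^\bullet\Omega^1_{\scA|_U})$, so that Corollary~\ref{cor:4.3june} and Lemma~\ref{lem:derk} apply verbatim; once these identifications are in place the statement is immediate.
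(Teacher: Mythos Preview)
Your proposal is correct and follows essentially the same approach as the paper: define $\scH$ componentwise by $\scH_U=H_U$, invoke the commutativity of diagram~\eqref{eq:4.3.1june} from Corollary~\ref{cor:4.3june}(b) to see that $\{H_U\}_{U\in\Open(M)}$ is a map of presheaves, and use Corollary~\ref{cor:4.3june}(a) for the $\scA$-module statement. You supply a bit more detail than the paper (the explicit formula for $\scH_U(v)$ and the uniqueness remark), but the argument is the same.
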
  

\begin{proof}
We need to check that $\scH =\{H_U\}_{U\in \Open(M)}$ is a map of
presheaves.  Since the diagram \eqref{eq:4.3.1june} commutes, the
collection $\{H_U\}_{U\in \Open(M)}$ of maps of vector spaces is a map
of presheaves of vector spaces. And if each $H_U$ is $\scA(U)$-linear,
then $\scH$ is a map of $\scA$-modules. 
\end{proof}  

\begin{construction} \label{constr:1}
For a $\cin$-ring $\scC$ the maps $\iota_\scC$ and $\cL_\scC$ are
natural in $\scC$ (see Lemma~\ref{lem:contr-for-rings} and
Lemma~\ref{lem:Lie_der-for-rings}).  By Corollary~\ref{cor:4.3} for
any local $\cin$-ringed space $(M,\scA)$ we get
(unique) maps of presheaves
\[
\bi: \cin\Dert(\scA)\to \Dert^{-1} (\Lambda^\bullet
\Omega^1_\scA )  \qquad (\textrm{the {\sf contraction map}})
\]
and
\[
\scL: \cin\Dert(\scA)\to \Dert^{-1} (\Lambda^\bullet
\Omega^1_\scA ) \qquad (\textrm{the {\sf Lie derivative map}}).
\]  
\end{construction}
\begin{remark} \label{rmrk:4.5june}
Let $(M, \scA)$ be a local $\cin$-ringed space and $\bi$, $\scL$ the
contraction and Lie derivative maps, respectively, of Construction~\ref{constr:1}.
For any open set $U\in \Open (M)$, for any derivation $v
=\{v_W\}_{W\in \Open(U)}\in \cin\Dert(\scA) (U) \equiv \cin\Der(\scA|_U)$
\begin{align*}
  (\bi)_U(v) &= \{ \iota_{\scA(W)} (v_W)\}_{W\in \Open(U)}& \\
  \textrm{
                                                           and }&&\\
 (\scL)_U(v) &= \{ \cL_{\scA(W)} (v_W)\}_{W\in \Open(U)},& 
\end{align*}
where $\iota_{\scA(W)} : \cDer(\scA(W)) \to \Der^{-1} (\Lambda^\bullet \Omega^1_{\scA(W)})$
is the contraction map constructed in Lemma~\ref{lem:contr-for-rings} and 
$\cL_{\scA(W)} : \cDer(\scA(W)) \to \Der^{0} (\Lambda^\bullet \Omega^1_{\scA(W)})$ is the Lie derivative map (Definition~\ref{def:3.4june}).
These identities hold by construction of $\bi$ and $\scL$.
\end{remark}

\begin{lemma} \label{lem:4.6june}
 Let $(M, \scA)$ be a local $\cin$-ringed space.
For any $r>0$, any $k\in \Z$ two maps of presheaves $ \scG,
\scH: (\cin\Dert (\scA))^r\to \Dert ^k (\Lambda ^\bullet \Omega^1_\scA)$
are equal if and only if
\[
\scG_U = \scH_U:  (\cin\Der (\scA|_U))^r\to \Der^k (\Lambda ^\bullet \Omega^1_\scA|_U)
\]  
for all $U\in \Open (M)$ if and only if for all $v_1, \ldots, v_r \in
\cin\Dert (\scA|_U)$ and any $W\in \Open(U)$
\[
  (\scG_U(v_1,\ldots, v_r))_W = (\scH_U(v_1,\ldots, v_r))_W \in \Der^k
 \left(\left. \Lambda ^\bullet \Omega^1_\scA\right|_U (W)\right) \equiv \Der^k(\Lambda
  ^\bullet \Omega^1_{\scA(W)}).
\]  
\end{lemma}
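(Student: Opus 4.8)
The plan is to observe that the asserted chain of equivalences is purely a matter of unwinding definitions: both $\scG$ and $\scH$ are maps of presheaves whose target $\Dert^k(\Lambda^\bullet\Omega^1_\scA)$ is itself assembled levelwise out of presheaf-derivations, so equality can be tested one open set at a time and then one component at a time. No computation is involved.

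First I would use the definition of a map of presheaves (cf.\ Notation~\ref{not:2.18}): a map $\scG\colon(\cin\Dert(\scA))^r\to \Dert^k(\Lambda^\bullet\Omega^1_\scA)$ is a family $\{\scG_U\}_{U\in\Open(M)}$ of $\R$-linear maps compatible with the restriction maps on source and target. Since $\scG$ and $\scH$ are both given to be such maps, the compatibility squares they are required to satisfy are the same squares; hence $\scG=\scH$ if and only if $\scG_U=\scH_U$ for every $U\in\Open(M)$. This gives the first equivalence.

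Next I would unwind equality of the maps $\scG_U,\scH_U\colon(\cin\Der(\scA|_U))^r\to\Der^k(\Lambda^\bullet\Omega^1_\scA|_U)$. By extensionality of maps of sets, this holds if and only if $\scG_U(v_1,\ldots,v_r)=\scH_U(v_1,\ldots,v_r)$ in $\Der^k(\Lambda^\bullet\Omega^1_\scA|_U)$ for every $r$-tuple $(v_1,\ldots,v_r)$ of elements of $\cin\Der(\scA|_U)$. Now I would invoke Definition~\ref{def:gr-der_presh}: an element $X\in\Der^k(\Lambda^\bullet\Omega^1_\scA|_U)$ is a family $\{X_W\}_{W\in\Open(U)}$ with $X_W\in\Der^k(\Lambda^\bullet\Omega^1_{\scA(W)})$ (using the identification $\Lambda^\bullet\Omega^1_\scA|_U(W)=\Lambda^\bullet\Omega^1_{\scA(W)}$ recorded in the proof of Corollary~\ref{cor:4.3june}), subject to restriction-compatibility; hence two such elements agree if and only if all their $W$-components agree. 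Applying this to $X=\scG_U(v_1,\ldots,v_r)$ and $X=\scH_U(v_1,\ldots,v_r)$ yields exactly the third condition, completing the second equivalence.

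The only point requiring a moment's care is the remark already used above: once all components of two maps of presheaves (respectively, of two graded derivations of a presheaf) coincide, the commuting squares demanded of each are literally the same squares and so impose nothing further. There is thus no genuine obstacle, and the lemma follows by this bookkeeping.
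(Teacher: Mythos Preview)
Your proposal is correct and takes essentially the same approach as the paper: the paper's own proof simply states that ``the proof amounts to unpacking the definitions and is omitted,'' and you have carried out exactly that unpacking. Your careful identification of where each equivalence comes from (presheaf maps as indexed families, then elements of $\Der^k(\Lambda^\bullet\Omega^1_\scA|_U)$ as $W$-indexed families of derivations) is the intended bookkeeping.
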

\begin{proof}
The proof amounts to unpacking the definitions and is omitted.
\end{proof}

\begin{theorem} \label{thm:cartan-for-presheaves} Let $(M,\scA)$ be a
  local $\cin$-ringed space, $\cin\Dert(\scA)$ the sheaf of
  $\cin$-derivations of the sheaf $\scA$ and
  $\Dert^\bullet (\Lambda^\bullet \Omega^1_\scA) $ the presheaf of graded Lie
  algebras of graded derivations of the presheaf 
  $(\Lambda^\bullet \Omega^1_\scA , \wedge, d)$  of $\cin$-algebraic differential forms,
  ${ad}(d): \Dert^\bullet (\Lambda^\bullet \Omega^1_\scA) \to
  \Dert^{\bullet +1} (\Lambda^\bullet \Omega^1_\scA)$ the corresponding map between
  graded derivations,
  $\bi :\cin\Dert(\scA)\to \Dert^{-1}
  (\Lambda^\bullet \Omega^1_\scA)$ the contraction map and
  $ {\scL} :\cin\Dert(\scA)\to \Dert^{0} (\Lambda^\bullet \Omega^1_\scA)$
  the Lie derivative map.              
  Then 
  \begin{eqnarray}
    {ad}(d)\circ {\bi} &= &  {\scL} \label{eq:4.7.1}\\
  {ad}(d)\circ  {\scL} &=&0 \label{eq:4.7.2}\\
   {\scL} \circ [\cdot, \cdot] &= &[\cdot, \cdot] \circ ( {\scL}\times
  { \scL})% : \cin\Dert(\scA)^2 \to \Dert^0 ( \bOmega^\bullet_\scA) 
                                    \label{eq:4.7.3}\\
 \, [\cdot, \cdot] \circ ( {\scL}\times {\bi}) &= & {\bi} \circ   [\cdot,
  \cdot] % : \cin\Dert(\scA)^2 \to \Dert^{-1} (
  % \bOmega^\bullet_\scA) 
                                                       \label{eq:4.7.4}\\
\, [\cdot, \cdot] \circ ( {\bi} \times{\bi})
                                          &=& 0.
                                              \label{eq:4.7.5}
\end{eqnarray}    

\end{theorem}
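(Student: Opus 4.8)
The plan is to reduce each of \eqref{eq:4.7.1}--\eqref{eq:4.7.5} to the ring-level Cartan calculus of Theorem~\ref{prop:cartan_calc_level0}, exploiting the fact that every operator appearing in the statement is computed componentwise over the opens of $M$.

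First I would assemble the componentwise descriptions. By Remark~\ref{rmrk:4.5june}, for $v=\{v_W\}_{W\in\Open(U)}\in\cin\Dert(\scA)(U)=\cin\Der(\scA|_U)$ one has $(\bi)_U(v)=\{\iota_{\scA(W)}(v_W)\}_{W\in\Open(U)}$ and $(\scL)_U(v)=\{\cL_{\scA(W)}(v_W)\}_{W\in\Open(U)}$; by Remark~\ref{rmrk:ad(d)W} together with Proposition~\ref{prop:ad(x)}, $ad(d)_U=\{ad(d_{\scA(W)})\}_{W\in\Open(U)}$; and by Lemma~\ref{lem:6.5}, Proposition~\ref{prop:3.15level1} and the lemma establishing that $\Dert^\bullet(\Lambda^\bullet\Omega^1_\scA)$ is a presheaf of graded Lie algebras, the Lie bracket on $\cin\Dert(\scA)$ and the graded bracket on $\Dert^\bullet(\Lambda^\bullet\Omega^1_\scA)$ are each given, at the level of $W$-components, by the commutator on $\cDer(\scA(W))$, respectively the graded commutator on $\Der^\bullet(\Lambda^\bullet\Omega^1_{\scA(W)})$. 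Consequently, by Lemma~\ref{lem:4.6june} (applied with $r=1$ for \eqref{eq:4.7.1}--\eqref{eq:4.7.2} and $r=2$ for \eqref{eq:4.7.3}--\eqref{eq:4.7.5}), each identity holds if and only if it holds after evaluating both sides at an arbitrary $U\in\Open(M)$, on arbitrary arguments in $\cin\Der(\scA|_U)$, and then passing to the $W$-component for an arbitrary $W\in\Open(U)$.

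Next I would observe that after this reduction each identity becomes, verbatim, one of the clauses of Theorem~\ref{prop:cartan_calc_level0} for the single $\cin$-ring $\scA(W)$. Writing $d$ for $d_{\scA(W)}$ and suppressing the subscript $\scA(W)$: since $\iota(v_W)$ has degree $-1$, $ad(d)(\iota(v_W))=[d,\iota(v_W)]=d\circ\iota(v_W)+\iota(v_W)\circ d=\cL(v_W)$, which is clause (v), so \eqref{eq:4.7.1} holds; then $ad(d)(\cL(v_W))=[d,\cL(v_W)]=d\circ\cL(v_W)-\cL(v_W)\circ d=0$ by clause (i), so \eqref{eq:4.7.2} holds; clause (ii) gives \eqref{eq:4.7.3}; clause (iii) gives \eqref{eq:4.7.4}; and clause (iv) gives \eqref{eq:4.7.5}.

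I do not expect a serious obstacle: the mathematical content is entirely carried by Theorem~\ref{prop:cartan_calc_level0}, and the present statement merely repackages it after sheaf-theoretic bookkeeping. The one point requiring care is to confirm that all the structure maps in play --- the maps $H_U$ of Corollary~\ref{cor:4.3june}, the two brackets, and $ad(d)$ --- genuinely commute with the restriction maps $\rho^U_V$, so that Lemma~\ref{lem:4.6june} is applicable; but that compatibility is exactly Corollary~\ref{cor:4.3} together with Proposition~\ref{prop:ad(x)} and the presheaf-of-graded-Lie-algebras lemma. Beyond keeping the degrees and arities straight, there is nothing further to do.
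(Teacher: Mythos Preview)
Your proposal is correct and follows essentially the same approach as the paper: reduce via Remark~\ref{rmrk:4.5june}, Remark~\ref{rmrk:ad(d)W}, and Lemma~\ref{lem:4.6june} to the $W$-components, then invoke Theorem~\ref{prop:cartan_calc_level0} for the ring $\scA(W)$. The paper writes out only \eqref{eq:4.7.1} in detail and declares the rest similar, whereas you spell out the correspondence with each clause, but the strategy is identical.
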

\begin{proof}
The theorem is an easy consequence of Remark~\ref{rmrk:4.5june},
Lemma~\ref{lem:4.6june} and Cartan calculus for $\cin$-rings,
Theorem~\ref{prop:cartan_calc_level0}.  We prove that
\eqref{eq:4.7.1} holds.  The proofs of the rest of the equalities
are similar.

Both sides of the equation are maps of presheaves, which are equal if
and only if their $U$-components $(ad(d)\circ \bi)_U= (ad(d))_U\circ
(\bi)_U$ and $(\scL)_U$ are equal for all $U\in \Open(M))$.  These
components are maps of presheaves from $\cin\Der(\scA|_U)$ to $\Der^0
(\Lambda^\bullet \Omega^1_\scA|_U)$.  By Remark~\ref{rmrk:2.90},
$(ad(d))_U = \{ad(d_{\scA(W)}\}_{W\in \Open(U)}$.  By definitions of
$\bi $ and $\scL$ (Construction~\ref{constr:1} and Remark~\ref{rmrk:4.5june})
\[
  (\bi)_U(v) = \{ \iota_{\scA(W)} (v_W)\}_{W\in \Open(U)} ,  \qquad
  (\scL)_U(v) = \{ \cL_{\scA(W)} (v_W)\}_{W\in \Open(U)}
\]
for all $v =\{v_W\}_{W\in \Open(U)}\in \cin\Dert(\scA) (U) \equiv
\cin\Der(\scA|_U)$.
Hence
\begin{align*}
 \left( (ad(d))_U\circ (\bi)_U \right)(v)&= \{ad(d_{\scA(W)} ( \iota_{\scA(W)}
  (v_W))\}_{W\in \Open(U)} = \{[d_{\scA(W)} ,\iota_{\scA(W)}
  (v_W)]\}_{W\in \Open(U)} \\ &= \{ \cL_{\scA(W)} (v_W)\}_{W\in \Open(U)}
  =  (\scL)_U(v),
\end{align*}
and \eqref{eq:4.7.1} holds.
\end{proof}
\mbox{}\\
We will use  Theorem~\ref{thm:cartan-for-presheaves}  to prove
Theorem~\ref{main_thm}.   Our proof requires a few facts about
sheafification which we now recall.

We continue to fix a local $\cin$-ringed space $(M, \scA)$.  For each
open subset $U$ of $M$ we have have the category $\Psh_U$ of
presheaves of modules over $(U,\scA|_U)$ and the full subcategory
$\Sh_U$ of $\Psh_U$ of sheaves.  The inclusion functor
$\Sh_U\hookrightarrow \Psh_U$ has a left adjoint, the sheafification
functor, which we denote by $\sh_U$ (or just by $\sh$ when the space
$U$ is clear).  The sheafification functor is unique up to a unique
isomorphism.   We also have restriction functors
\[
\rho^U_V:\Psh_U\to \Psh_V
\]  
for all open sets $V,U\in \Open(M)$ with $V\subset U$.  To reduce
notational clutter we  set
\[
\scN_1|_V\xrightarrow{X|_V} \scN_2|_V := \rho^U_V\, (\scN_1\xrightarrow{X}\scN_2)
\]  
for all morphisms $\scN_1\xrightarrow{X}\scN_2$ in $\Psh_U$.
Note that for any three open sets $W\subset V \subset U\subset M$
\begin{equation} \label{eq:4.7.6june} 
\rho^V_W\circ \rho^U_V = \rho ^U_W,
\end{equation}  
i.e., $\left(\scN|_V\right)|_W = \scN|_W$ and so on.
\begin{lemma} \label{lem:4.8june}
  Let  $(M, \scA)$ be a local $\cin$-ringed space.  We can
  define a family $\{\sh_U:\Psh_U\to \Sh_U\}_{U\in\Open(M)}$ of
  sheafification functors so that for two modules $\scN_1, \scN_2\in
  \Psh_M$ any pair $V\subset U\subset M$ of open sets  and any
  morphism $X\in \Hom(\scN_1|_U, \scN_2|_U)$
\[  
 \sh_U(X) |_V = \sh_V (X|_V).
\] 
\end{lemma}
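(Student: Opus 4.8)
The plan is to fix, once and for all, the explicit germ-level model of sheafification and to observe that --- unlike an abstract left adjoint --- this model is visibly compatible with restriction to open subsets. Recall that for a presheaf $\scF$ of $\scA|_O$-modules on an open set $O\subseteq M$ and a point $x\in O$ one has the stalk $\scF_x=\varinjlim_{x\in W\in\Open(O)}\scF(W)$, and one defines $\sh_O(\scF)(W)$, for $W\in\Open(O)$, to be the set of functions $s\colon W\to\coprod_{x\in W}\scF_x$ with $s(x)\in\scF_x$ for all $x\in W$ and such that every point of $W$ has an open neighbourhood $W'\subseteq W$ on which $s$ agrees with $y\mapsto t_y$ for some $t\in\scF(W')$; the restriction maps are literal restrictions of functions and the $\scA(W)$-action is pointwise. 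A standard argument shows that $\sh_O$ is a functor into $\Sh_O$ equipped with a natural map $\scF\to i_O\sh_O(\scF)$ exhibiting it as left adjoint to the inclusion $i_O\colon\Sh_O\hookrightarrow\Psh_O$, i.e.\ as a sheafification functor; on a morphism $X\colon\scF\to\scG$ the map $\sh_O(X)$ post-composes a section with the stalk maps $X_x\colon\scF_x\to\scG_x$.

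The key point is the identification of stalks. Let $V\subseteq U\subseteq M$ be open, let $x\in V$, and let $\scF\in\Psh_U$. The subposet $\{W\in\Open(V):x\in W\}$ of $\{W\in\Open(U):x\in W\}$ is cofinal, since for $x\in W\subseteq U$ one has $x\in W\cap V\subseteq W$ with $W\cap V\in\Open(V)$; hence the canonical map $(\scF|_V)_x\to\scF_x$ is a bijection. Fix these bijections as identifications once and for all. They are coherent under iterated restriction $W\subseteq V\subseteq U$ because cofinality composes and, by \eqref{eq:4.7.6june}, $\rho^V_W\circ\rho^U_V=\rho^U_W$ on the nose.

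With these identifications in place, for an open set $W\subseteq V$ the sets $\sh_U(\scF)|_V(W)=\sh_U(\scF)(W)$ and $\sh_V(\scF|_V)(W)$ consist of exactly the same functions $W\to\coprod_{x\in W}\scF_x$: the condition $s(x)\in\scF_x$ is literally the same, and the local-representability condition is the same because for open $W'\subseteq V$ one has $(\scF|_V)(W')=\scF(W')$ with the same restriction maps, and a representing neighbourhood of a point of $W$ may always be shrunk inside $V$. Since the restriction maps and module structures on both sides are the pointwise/literal ones, this gives $\sh_U(\scF)|_V=\sh_V(\scF|_V)$ as sheaves of $\scA|_V$-modules. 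For a morphism $X\in\Hom(\scN_1|_U,\scN_2|_U)$ with $\scN_1,\scN_2\in\Psh_M$, both $\sh_U(X)|_V$ and $\sh_V(X|_V)$ are, by the description above, post-composition of a section with stalk maps, and the stalk map of $X$ at $x\in V$ coincides with that of $X|_V$ under the identification $(\scN_i|_U)_x=(\scN_i|_V)_x$; hence $\sh_U(X)|_V=\sh_V(X|_V)$, as required.

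I expect the only real obstacle to be bookkeeping rather than mathematics: one must check that the chosen stalk identifications are globally coherent --- that restricting from $U$ to $V$ and then to $W$ uses the same identification as restricting directly from $U$ to $W$ --- so that all of the equalities above are genuine equalities and not merely canonical isomorphisms. This coherence is precisely what \eqref{eq:4.7.6june} together with the transitivity of cofinality provides; committing to the germ model, rather than to the iterated plus construction, is what makes the verification transparent.
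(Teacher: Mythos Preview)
Your proof is correct but takes a different route from the paper's. You work with the explicit étalé/germ model of sheafification on each open set and show that, once stalks are identified via cofinality, the sets of sections and the induced maps on morphisms literally coincide. The paper instead argues abstractly: it fixes one sheafification $\sh_M$ on $M$ with its universal arrows $\eta^\scN_M$, and then for each $U$ \emph{declares} $\sh_U(\scN|_U):=\sh_M(\scN)|_U$ with universal arrow $(\eta^\scN_M)|_U$; since $\sh_U$ on morphisms is forced by the universal property, restricting the defining square for $\sh_U(X)$ from $U$ to $V$ yields a square that characterizes $\sh_V(X|_V)$, whence equality. Your approach has the merit of defining $\sh_U$ uniformly on all of $\Psh_U$ (not just on restrictions of global presheaves) and making the equality $\sh_U(\scN|_U)|_V=\sh_V(\scN|_V)$ visible at the level of sections, at the cost of some stalk bookkeeping. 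The paper's approach is shorter and model-independent, but only prescribes $\sh_U$ on objects of the form $\scN|_U$ with $\scN\in\Psh_M$; this suffices for the intended applications, though strictly speaking one must then extend $\sh_U$ to the rest of $\Psh_U$ in some (irrelevant) way to have a genuine functor $\Psh_U\to\Sh_U$ as the statement promises.
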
  

\begin{proof}
Recall that the value of a sheafification functor $\sh_U:\Psh_U\to \Sh_U$  on
morphisms is uniquely determined by its value on objects and choices
of universal arrows $\{\eta_U^\scN :\scN\to \sh(\scN)\}_{\scN\in
  \Psh_U}$.   Recall also that given any morphism of presheaves $f$
from a presheaf $\scN\in \Psh_U$ to a sheaf $\scS$ so that $f$ induces an
isomorphism on stalks, we may consider $\scS$ as a sheafification of
$\scN$ and $f$  as the universal arrow $\eta_U^\scN$.

Given $U\in \Open(M)$ we now {\em choose} a functor $\sh_U$ in such a way
that for any $\scN\in \Psh_M$,
\[
\sh_U(\scN):= \sh(\scN)|_U\quad \textrm{ and } \quad
\eta_U^{(\scN|_U)} = \eta^\scN_M|_U.
\]  
Then for any $\scN_1, \scN_2\in \Psh_M$ and any
$X\in \Hom(\scN_1|_U, \scN_2|_U)$ the diagram
\[
\xy
(-25,10)*+{\scN_1|_U}="1";
(15,10)*+{\sh(\scN_1)|_U}="2";
(-25,-5)*+{\scN_2|_U}="3";
(15,-5)*+{\sh(\scN_2)|_U}="4";
{\ar@{->}^{(\eta^{\scN_1}_M)|_U} "1";"2"};
{\ar@{->}_{X} "1";"3"};
{\ar@{->}^{\sh_U(X)} "2";"4"};
{\ar@{->}_{(\eta^{\scN_2}_M)|_U} "3";"4"};
\endxy
\]
commutes.  Applying the restriction functor $\rho^U_V$ to the diagram
above (and taking \eqref{eq:4.7.6june} into account) gives  the commutative diagram
\[
\xy
(-25,10)*+{\scN_1|_V}="1";
(15,10)*+{\sh(\scN_1)|_V}="2";
(-25,-5)*+{\scN_2|_V}="3";
(15,-5)*+{\sh(\scN_2)|_V}="4";
{\ar@{->}^{(\eta^{\scN_1}_M)|_V} "1";"2"};
{\ar@{->}_{X|_V} "1";"3"};
{\ar@{->}^{\sh_U(X)|_V} "2";"4"};
{\ar@{->}_{(\eta^{\scN_2}_M)|_V} "3";"4"};
\endxy.
\]
But 
\[
\xy
(-25,10)*+{\scN_1|_V}="1";
(15,10)*+{\sh(\scN_1)|_V}="2";
(-25,-5)*+{\scN_2|_V}="3";
(15,-5)*+{\sh(\scN_2)|_V}="4";
{\ar@{->}^{(\eta^{\scN_1}_M)|_V} "1";"2"};
{\ar@{->}_{X|_V} "1";"3"};
{\ar@{->}^{\sh_V(X|_V)} "2";"4"};
{\ar@{->}_{(\eta^{\scN_2}_M)|_V} "3";"4"};
\endxy
\]
commutes as well.  Therefore
\[  
 \sh_U(X) |_V = \sh_V (X|_V).
\] 
\end{proof}

\begin{lemma} \label{lem:4.9june}
Let $(\scM^\bullet, \wedge)$ be a presheaf of commutative graded
algebras over a local $\cin$-ringed space $(M,\scA)$ and
$(\sh_M(\scM^\bullet), \sh_M(\wedge))$ its sheafification.   The family of
of sheafification functors $\{\sh_U\}_{U\in \Open(M)}$ constructed in
Lemma~\ref{lem:4.8june} give rise to a degree-preserving map
\[
G: \Dert^\bullet (\scM^\bullet) \to \Dert^\bullet (\sh_M(\scM^\bullet)) 
\]
of presheaves of graded Lie algebras.
\end{lemma}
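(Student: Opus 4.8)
The plan is to define $G$ componentwise by sheafification and then check the three things the statement asks for: that sheafifying a graded derivation produces a graded derivation of the sheafified CGA, that the resulting maps glue to a morphism of presheaves, and that they preserve the graded bracket.

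First I would set up the componentwise map. The chosen sheafification functors $\sh_U$ of Lemma~\ref{lem:4.8june} extend in the usual way to $\R$-linear maps of presheaves of $\scA|_U$-modules: sheafification of the underlying presheaves of $\R$-vector spaces is computed germwise, exactly as for modules, so an $\R$-linear $f\colon\scN_1\to\scN_2$ induces a unique $\R$-linear $\sh_U(f)\colon\sh_U(\scN_1)\to\sh_U(\scN_2)$ with $\sh_U(f)\circ\eta=\eta\circ f$. For $X=\{X^\ell\}_{\ell\in\Z}\in\Der^k(\scM^\bullet|_U)$ I would then put $G_U(X):=\{\sh_U(X^\ell)\}_{\ell\in\Z}$; by Lemma~\ref{lem:4.8june} we have $\sh_U(\scM^\ell|_U)=\sh_M(\scM^\ell)|_U$, so this is a degree-$k$ system of maps of sheaves with the right domains and codomains, i.e.\ a candidate element of $\Der^k(\sh_M(\scM^\bullet)|_U)=\Dert^k(\sh_M(\scM^\bullet))(U)$. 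That $G_U$ is $\R$-linear in $X$ is immediate from additivity of $\sh_U$, degree is preserved by construction, and that $G=\{G_U\}_{U\in\Open(M)}$ is a map of presheaves follows from Lemma~\ref{lem:4.8june} applied componentwise: for $V\subseteq U$ one gets $\rho^U_V(G_U(X))=\{\sh_U(X^\ell)|_V\}_\ell=\{\sh_V(X^\ell|_V)\}_\ell=G_V(\rho^U_V X)$.

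The step I expect to be the only one requiring real care is checking that $G_U(X)=\sh_U(X)$ is genuinely a graded derivation of $(\sh_M(\scM^\bullet)|_U,\sh_M(\wedge))$, i.e.\ that it obeys the Leibniz rule relative to the sheafified multiplication. My plan is to pass to stalks. Sheafification preserves stalks, so for $z\in U$ the stalk $(\sh_M(\scM^\ell))_z$ is $(\scM^\ell)_z$; the stalk of $\sh_M(\wedge)$ at $z$ is the stalk multiplication $\wedge_z$ (the standard compatibility of sheafification with the presheaf tensor product, quotable from \cite{LdR}), making $(\scM^\bullet)_z=\colim_{W\ni z}\scM^\bullet(W)$ a CGA; and the stalk of $\sh_U(X^\ell)$ at $z$ is the induced map $X_z=\colim_{W\ni z}X_W$. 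Since the Leibniz identity is a finitary equation it is inherited by the filtered colimit, so $X_z$ is a degree-$k$ graded derivation of $(\scM^\bullet)_z$: for germs $[x],[y]$ represented over a common $W\ni z$ by $x\in\scM^p(W)$, $y\in\scM^q(W)$,
\[
X_z([x]\cdot[y])=[X_W(x\wedge y)]=[X_W(x)\wedge y+(-1)^{pk}x\wedge X_W(y)]=X_z([x])\cdot[y]+(-1)^{pk}[x]\cdot X_z([y]).
\]
Then, for an open $V\subseteq U$ and sections $\sigma\in\sh_M(\scM^p)(V)$, $\tau\in\sh_M(\scM^q)(V)$, the two sides of the desired Leibniz identity for $\sh_U(X)_V$ are sections of the sheaf $\sh_M(\scM^{p+q+k})$ over $V$; since that sheaf is separated it suffices to compare their germs at each $z\in V$, where both reduce to the identity for $X_z$ just established. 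The same germwise comparison re-confirms $\R$-linearity of $\sh_U(X)_V$, so $G_U(X)\in\Der^k(\sh_M(\scM^\bullet)|_U)$ as wanted.

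Finally, for the bracket: given $X\in\Der^p(\scM^\bullet|_U)$, $Y\in\Der^q(\scM^\bullet|_U)$, functoriality of $\sh_U$ gives $\sh_U(X\circ Y)=\sh_U(X)\circ\sh_U(Y)$ and additivity then yields
\[
\sh_U([X,Y])=\sh_U(X\circ Y)-(-1)^{pq}\sh_U(Y\circ X)=[\sh_U(X),\sh_U(Y)],
\]
so each $G_U$ is a morphism of graded Lie algebras; combined with the presheaf-map statement above, this makes $G$ a morphism of presheaves of graded Lie algebras, as required.
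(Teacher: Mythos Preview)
Your proof is correct and follows the same approach as the paper: define $G_U(X)=\{\sh_U(X^\ell)\}_{\ell\in\Z}$, use Lemma~\ref{lem:4.8june} to verify that $G=\{G_U\}$ is a map of presheaves, and invoke functoriality of $\sh_U$ (hence preservation of composition) for the graded bracket. The only difference is that the paper asserts in one line that $G_U(X)$ is a graded derivation ``since $\sh_U$ is a functor,'' whereas you supply a more explicit stalkwise verification of the Leibniz rule; this is a reasonable fleshing-out of a step the paper leaves terse, and otherwise the two arguments coincide.
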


\begin{proof}
Fix a degree $k\in \Z$.  For any $U\in \Open(M)$ for any integer
$\ell\in \Z$ we have a map
\begin{align*}
\sh_U: \Hom (\scM^\ell|_U, \scM^{\ell +k}|_U) &\to \Hom
  (\sh_U(\scM^\ell|_U), \sh_U(\scM^{\ell +k}|_U) )\\
 & \left( \equiv \Hom (\sh_M(\scM^\ell)|_U, \sh_M(\scM^{\ell +k})|_U) \right)
\end{align*}
of $\scA(U)$-modules.  Since $\sh_U$ is a functor, for any derivation
$X= \{X^\ell\}_{\ell\in \Z}\in \Der^k (\scM^\bullet |_U)$
\[
G_U (X) := \{\sh_U(X^\ell)\}_{\ell\in \Z}
\]
is a derivation of degree $k$ of the CGA $(\sh_M(\scM^\bullet)|_U, \sh(\wedge))$.
We thus get a map
\[
G_U :\Der^k (\scM^\bullet|_U) \to \Der^k (\sh_M(\scM^\bullet)|_U)
\]  
of $\scA(U)$-modules.

By Lemma~\ref{lem:4.8june} for any pair $V,U\subset M$ of open sets
with $V\subset U$ and any $X\in \Der^k (\scM^\bullet|_U)$
\[
G_V (X|_V) = \{\sh _V (X^\ell|_V)\}_{\ell \in \Z} =  \{\sh _U (X^\ell)
|_V)\}_{\ell \in \Z}
= G_U(X)|_V.
\]
Hence $G= \{G_U\}_{U\in \Open(M)}$ is a well-defined map of presheaves
of $\scA$-modules from $\Dert^k (\scM^\bullet)$ to $\Dert^k
(\sh_M(\scM^\bullet))$.  Since $k$ is arbitrary we get a degree
preserving map from the graded presheaf $\Dert^\bullet (\scM^\bullet)
$ to the graded presheaf $\Dert^\bullet (\sh_M(\scM^\bullet) )$ of
$\scA$-modules.

It remains to check that $G$ preserves (graded) commutators.  Since
$G$ preserves degrees, 
it is enough to check that for each open set $U$, the map $G_U:
\Der^\bullet(\scM^\bullet|_U)\to \Der^\bullet(\sh_M(\scM^\bullet)|_U)$
of graded modules preserves composition.
This is not hard and follows from the fact that each $\sh_U$ is a
functor.  Namely,  given $X=\{X^\ell\}_{\ell\in \Z} \in
\Der^{|X|}(\scM^\bullet|_U)$, $Y=\{Y^\ell\}_{\ell\in \Z} \in
\Der^{|Y|}(\scM^\bullet|_U)$,
\[
  G_U (X\circ Y) = \{\sh_U (X^{\ell+|Y|} \circ Y^\ell)\}_{\ell\in \Z} =
  \{\sh_U (X^{\ell+|Y|} )\circ \sh_U( Y^\ell)\}_{\ell\in \Z}  = G_U(X)
  \circ G_U(Y).
\]  
\end{proof}

\begin{lemma} \label{lem:4.10june}
  
Let $(\scM^\bullet, \wedge, d)$ be a presheaf of commutative
differential graded algebras over a local $\cin$-ringed space
$(M,\scA)$ and $G:\Dert^\bullet (\scM^\bullet) \to \Dert^\bullet
(\sh_M(\scM^\bullet)) $ the map of graded presheaves of
Lemma~\ref{lem:4.9june} and $ad(d): \Dert^\bullet (\scM^\bullet) \to
\Dert^{\bullet +1} (\scM^\bullet)$, $ad(\sh(d)): \Dert^\bullet (\sh(\scM^\bullet)) \to
\Dert^{\bullet +1}(\sh(\scM^\bullet ))$ the maps of graded presheaves
of real vector spaces induced by the derivation $d:\scM^\bullet \to
\scM^{\bullet +1}$ and its sheafification $\sh(d) \equiv G_M(d)$ (cf.\
Proposition~\ref{prop:ad(x)}). 
The diagram
\begin{equation} \label{eq:5.9june}
\xy
 (-25,10)*+{\Dert^\bullet (\scM^\bullet) }="1";
 (25,10)*+{\Dert^{\bullet +1}(\scM^\bullet )}="2";
 (-25,-5)*+{\Dert^\bullet (\sh (\scM^\bullet))}="3";
(25,-5)*+{\Dert^{\bullet +1}(\sh((\scM^\bullet))}="4";
 {\ar@{->}^{{ad}(d)} "1";"2"};
 {\ar@{->}_{G} "1";"3"};
 {\ar@{->}^{G} "2";"4"};
{\ar@{->}_{{ad}(\sh(d))} "3";"4"};
\endxy
\end{equation}
commutes.
\end{lemma}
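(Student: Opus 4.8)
The plan is to verify the commutativity of \eqref{eq:5.9june} componentwise. A morphism of presheaves is determined by its components on open sets, so it suffices to show that for every $U \in \Open(M)$ the equality of $U$-components
\[
G_U \circ {ad}(d)_U = {ad}(\sh(d))_U \circ G_U
\]
holds as maps $\Der^\bullet(\scM^\bullet|_U) \to \Der^{\bullet+1}(\sh_M(\scM^\bullet)|_U)$. Since two graded derivations of a CGA agree if and only if their degree-$\ell$ maps agree for every $\ell\in\Z$, it then further suffices to compare the two sides in each degree.

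The one structural input I would record first is how $\sh(d)$ behaves locally. By construction $\sh(d) = G_M(d) = \{\sh_M(d^\ell)\}_{\ell\in\Z}$, so the degree-$\ell$ component of $\sh(d)|_U$ is $\sh_M(d^\ell)|_U$, which by Lemma~\ref{lem:4.8june} equals $\sh_U(d^\ell|_U)$; that is, $(\sh(d)|_U)^\ell = \sh_U(d^\ell|_U)$ for all $\ell$. Together with the definition $G_U(Y)=\{\sh_U(Y^\ell)\}_\ell$ from Lemma~\ref{lem:4.9june}, this lets me expand both sides. Fix $Y=\{Y^\ell\}_{\ell\in\Z}\in\Der^k(\scM^\bullet|_U)$ and evaluate on $\scM^n$. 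On the left, $ {ad}(d)_U(Y) = [d|_U,Y]$ has degree-$n$ component $d^{k+n}|_U\circ Y^n - (-1)^k Y^{n+1}\circ d^n|_U$ by the graded commutator formula \eqref{eq:gr-comm} (with $|d|=1$), so $G_U({ad}(d)_U(Y))$ has degree-$n$ component $\sh_U\bigl(d^{k+n}|_U\circ Y^n - (-1)^k Y^{n+1}\circ d^n|_U\bigr)$. On the right, $ {ad}(\sh(d))_U(G_U(Y)) = [\sh(d)|_U, G_U(Y)]$ has degree-$n$ component $(\sh(d)|_U)^{k+n}\circ \sh_U(Y^n) - (-1)^k \sh_U(Y^{n+1})\circ (\sh(d)|_U)^n$, which by the identity just recorded equals $\sh_U(d^{k+n}|_U)\circ \sh_U(Y^n) - (-1)^k \sh_U(Y^{n+1})\circ \sh_U(d^n|_U)$.

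Finally, the two degreewise expressions coincide because each sheafification functor $\sh_U$ is additive and composition-preserving: it is $\scA(U)$-linear on the relevant hom-modules (as noted in the proof of Lemma~\ref{lem:4.9june}) and functorial, so $\sh_U(f\circ g - (-1)^k h\circ e) = \sh_U(f)\circ\sh_U(g) - (-1)^k\sh_U(h)\circ\sh_U(e)$. Hence $G_U\circ {ad}(d)_U = {ad}(\sh(d))_U\circ G_U$ for every $U$, and \eqref{eq:5.9june} commutes. I do not expect a genuine obstacle here; the only point that needs care is the bookkeeping that $\sh(d)|_U$ is computed component by component and matches $\sh_U(d|_U)$, which is precisely the content of Lemma~\ref{lem:4.8june}, after which everything is a formal consequence of the functoriality and additivity of sheafification.
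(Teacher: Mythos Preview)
Your proof is correct and follows essentially the same route as the paper: both verify the diagram componentwise on each $U\in\Open(M)$, identify $\sh(d)|_U$ with $G_U(d|_U)$ via Lemma~\ref{lem:4.8june}, and then reduce to the compatibility of $G_U$ with graded commutators. The only difference is packaging: the paper invokes directly that $G_U$ is a map of graded Lie algebras (established in Lemma~\ref{lem:4.9june}), so $G_U([d|_U,Y])=[G_U(d|_U),G_U(Y)]$ in one step, whereas you unpack this to the degree-$n$ level and appeal to functoriality and additivity of $\sh_U$; your expansion is exactly what underlies the Lie-algebra-map property, so the arguments are the same in content.
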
  

\begin{proof}
By Proposition~\ref{prop:ad(x)}, for any degree $k$ derivation $X$ of
a presheaf $(\scN^\bullet, \wedge)$ of commutative graded algebras
there is a map $ad(X):\Dert^\bullet (\scN^\bullet) \to
\Dert^{\bullet+k} (\scN^\bullet)$ of graded presheaves of real vector
spaces increasing degrees by $k$.  In particular, for any open set
$U\in \Open(M)$ the $U$-component $ad(X)_U$ of $ad(X)$ is given by
\[
ad(X)_U Y = [X|_U, Y] \equiv X|_U\circ Y - (-1)^{k|Y|} Y\circ X|_U.
\]  
By construction of the map $G$, the value of $\sh \equiv
\sh_M:\Psh_M\to \Sh_M$ on the differential $d:\scM^\bullet \to
\scM^{\bullet +1}$ is $G_M (d)$.  Since for any $U\in \Open(M)$, $G_U: \Der^\bullet (\scM^\bullet|_U) \to \Der^\bullet
(\sh_M(\scM^\bullet)|_U)$ is a map of graded Lie algebras,
\[
G_U ([d|_U, Y]) = [G_U(d|_U), G_U(Y)]
\]
for all $Y \in \Der^\bullet (\scM^\bullet|_U) $.
Hence
\begin{align*}
(ad (G_M(d))_U \circ G_U)\, Y &= [G_M(d)|_U, G_U(Y)] =[G_U (d|_U),
                              G_U(Y)] \\&= G_U ([d|_U, Y]) = (G_U \circ ad(d)_U))\, Y.
\end{align*}  
Therefore 
\[
ad (\sh(d)) _U \circ G_U  = G_U \circ ad (d)_U
\]  
for all $U$ and the diagram \eqref{eq:5.9june} commutes.
\end{proof}

\begin{notation}
We now simplify (and abuse) our  notation.  We  write $\sh$ for the
map $G$ constructed in Lemma~\ref{lem:4.9june}.  In particular since
the sheaf $\bOmega^\bullet
_\scA$ of $\cin$-algebraic de Rham forms is the
sheafification of the presheaf $(\Lambda^\bullet
\Omega^1_\scA, \wedge, d)$ of CDGAs we write
\[
  \sh: \Dert^\bullet  (\Lambda^\bullet \Omega^1_\scA) \to
  \Dert^\bullet (\bOmega^\bullet _\scA)
\]
for the map of presheaves of graded Lie algebras induced by the
sheafification functors.
\end{notation}  

\begin{corollary}
For any local $\cin$-ringed space $(M, \scA)$ the diagram
\begin{equation} \label{eq:4.12.1}
\xy
(-25,10)*+{\Dert^k(\Lambda^\bullet \Omega^1_\scA)}="1";
(15,10)*+{\Dert^{k+1}(\Lambda^\bullet \Omega^1_\scA)}="2";
(-25,-5)*+{\Dert^k(\bOmega^\bullet _\scA)}="3";
(15,-5)*+{\Dert^{k+1}(\bOmega^\bullet _\scA)}="4";
{\ar@{->}^{{ad}(d)} "1";"2"};
%{\ar@{->}  "1";"2"};
{\ar@{->}_{\sh} "1";"3"};
{\ar@{->}^{\sh} "2";"4"};
{\ar@{->}_{{ad}(\sh(d))} "3";"4"};
\endxy
\end{equation}
commutes.
\end{corollary}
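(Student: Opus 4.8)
The plan is to deduce this corollary immediately from Lemma~\ref{lem:4.10june} by specializing the presheaf of CDGAs appearing there. First I would take $(\scM^\bullet, \wedge, d)$ in Lemma~\ref{lem:4.10june} to be the presheaf $(\Lambda^\bullet \Omega^1_\scA, \wedge, d)$ of $\cin$-algebraic differential forms over $(M,\scA)$; we have already checked that this is a presheaf of CDGAs, and by the very construction of $\bOmega^\bullet_\scA$ its sheafification $\sh_M(\Lambda^\bullet \Omega^1_\scA)$ is the sheaf $\bOmega^\bullet_\scA$ of $\cin$-algebraic de Rham forms, with differential $\sh(d) = G_M(d)$ in the notation of Lemma~\ref{lem:4.9june}.

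Next I would recall that Lemma~\ref{lem:4.9june} produces the degree-preserving map $G\colon \Dert^\bullet(\Lambda^\bullet \Omega^1_\scA) \to \Dert^\bullet(\bOmega^\bullet_\scA)$ of presheaves of graded Lie algebras, which---following the notational convention introduced just above---we are now writing as $\sh$. With this identification, the commuting square of Lemma~\ref{lem:4.10june} (whose horizontal maps are $ad(d)$ on top and $ad(\sh(d))$ on the bottom, and whose vertical maps are $G = \sh$) becomes exactly the diagram of the corollary once we read off, in each corner, the fixed-degree components in degrees $k$ and $k+1$ respectively.

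I expect no real obstacle: the content is entirely in Lemma~\ref{lem:4.10june}, and what remains is bookkeeping --- the identification $\sh_M(\Lambda^\bullet \Omega^1_\scA, \wedge, d) = (\bOmega^\bullet_\scA, \wedge, \sh(d))$, and checking that the abbreviation $\sh$ for $G$ is consistent with the earlier use of $\sh$ for $\sh_M$, so that $ad(\sh(d))$ here really is the map $ad(\sh(d))$ of Lemma~\ref{lem:4.10june}. The only mild subtlety worth noting is that \eqref{eq:4.12.1} is stated degreewise whereas Lemma~\ref{lem:4.10june} is a statement about graded presheaves; but since $G$ and $ad(d)$, $ad(\sh(d))$ are all built componentwise, passing to a fixed degree $k$ is immediate. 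Hence the corollary is a formal consequence of Lemma~\ref{lem:4.10june} and the definitions.
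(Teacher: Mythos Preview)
Your proposal is correct and follows essentially the same approach as the paper: apply Lemma~\ref{lem:4.10june} to the presheaf $(\Lambda^\bullet \Omega^1_\scA, \wedge, d)$ and use that $\bOmega^\bullet_\scA$ is its sheafification, with $G$ rewritten as $\sh$. Your additional remarks about the degreewise reading and the consistency of the $\sh$ notation are accurate but more detailed than what the paper records.
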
  
\begin{proof}
Apply Lemma~\ref{lem:4.10june} to the presheaf $(\Lambda^\bullet
\Omega^1_\scA, \wedge, d)$ of CDGAs  and keep in mind that the sheaf $\bOmega^\bullet
_\scA$ is the sheafification of $\Lambda^\bullet \Omega^1_\scA$.
\end{proof}

\begin{definition} Let $(M, \scA)$ be a local $\cin$-ringed space. We
  define the {\sf Lie derivative map}
  \[
  \frL: \cin\Der(\scA) \to \Dert^0(\bOmega^\bullet _\scA)
  \]
   on the
  level of sheaves to be the composite of $\sh$ and $\scL$:
\[\frL:= \left(\Dert^0(\bOmega^\bullet _\scA) \xleftarrow{\sh}
\Dert^0(\Lambda^\bullet \Omega^1_\scA)\right)\circ \left( \Dert^0(\Lambda^\bullet
\Omega^1_\scA) \xleftarrow{\scL} \cin\Der(\scA)\right).
\]
We define the {\sf contraction map } 
\[\fri:
  \cin\Der(\scA) \to \Dert^{-1} (\bOmega^\bullet _\scA)
 \]  to be the
  composite of $\sh$ and $\bi$:
\[  
\bfi := \left(\Dert^{-1}(\bOmega^\bullet _\scA) \xleftarrow{\sh}
\Dert^{-1}(\Lambda^\bullet \Omega^1_\scA)\right)\circ \left( \Dert^{-1}(\Lambda^\bullet
\Omega^{-1}_\scA) \xleftarrow{\bi} \cin\Der(\scA) \right).
\] 
\end{definition}

\begin{proof} [Proof of Theorem~\ref{main_thm}]
To prove \eqref{eq:5.9j} we observe that
\[
\frL =\sh \circ \scL \stackrel{\eqref{eq:4.7.1} }{=} \sh \circ ad(d)
\circ \bi \stackrel{ \eqref{eq:4.12.1} }{=} ad(\sh(d)) \circ \sh \circ
\bi = ad (\sh(d)) \circ \bfi.
\]
By Lemma~\ref{lem:2.33june}, $ad(\sh (d))\circ ad(\sh(d)) =0$.
Therefore
\[
ad(\sh(d))\circ \frL \stackrel{\eqref{eq:5.9j}}{=} ad(\sh (d))\circ
ad(\sh(d)) \circ \bfi = 0\circ \bfi =0,
\]  
which proves \eqref{eq:5.10j}.

To prove \eqref{eq:5.11j} we recall that by Lemma~\ref{lem:4.9june}
the map $\sh \, (=G): \Dert^\bullet (\Lambda^\bullet \Omega^1_\scA)
\to \Dert^\bullet (\bOmega ^\bullet_\scA)$ is a map of presheaves of
graded Lie algebras.  If we view the graded commutators $[\cdot,
\cdot]_{\Dert^\bullet (\Lambda^\bullet \Omega^1_\scA)}$ and $[\cdot,
\cdot]_{\Dert^\bullet (\bOmega ^\bullet_\scA)}$ as maps of graded presheaves
then Lemma~\ref{lem:4.9june} says that
\begin{equation} \label{eq:4.13.1}
\sh \circ [\cdot, \cdot] = [\cdot, \cdot] \circ (\sh \times \sh),
\end{equation}
where we dropped the subscripts on the commutators to reduce the
clutter.  Consequently
\[
\frL \circ [\cdot, \cdot] = \sh \circ \cL \circ [\cdot,
\cdot]\stackrel{\eqref{eq:4.7.3}}{=} \sh \circ [\cdot, \cdot ]\circ
(\cL\times \cL)
\stackrel{\eqref{eq:4.13.1}}{=}  [\cdot, \cdot ] \circ (\sh\times \sh)
\circ (\cL \times \cL) = 
[\cdot, \cdot ] \circ (\frL\times \frL).
\]
This proves \eqref{eq:5.11j}.
Similarly,
\[
[\cdot, \cdot] \circ ( \frL\times \bfi) = [\cdot, \cdot] \circ (
\sh\circ \scL\times \sh\circ \bi) = \sh\circ  [\cdot, \cdot] \circ
(\scL\times \bi) \stackrel{\eqref{eq:4.7.4}}{=} \sh\circ \bi = \bfi,
\]  
which proves \eqref{eq:5.12j}.
Finally,
\[
 [\cdot, \cdot] \circ ( \bfi \times\bfi) =  [\cdot, \cdot] \circ ( \sh
 \circ \bi \times\sh\circ \bi) = \sh \circ  [\cdot, \cdot] \circ (\bi
 \times \bi) \stackrel{\eqref{eq:4.7.5}}{=} \sh \circ 0 = 0.
\]  
\end{proof}

\section{Some examples} \label{sec:examples}

There are (non-empty) local $\cin$-ringed spaces $(M,\scA)$ with the
property that the sheaf $\cin\Dert(\scA)$ of $\cin$-derivations is
zero.  In this case the Theorem~\ref{main_thm} is a statement about
zero maps, which is not terribly interesting.  The simplest example,
in effect, has been known since 1950's \cite{NW}: given a topological
space $M$, the space of algebraic derivations of the algebra $C^0(M)$
of continuous functions is zero.  We recall the detail of the argument
below in Lemma~\ref{lem:NW}.  Note that $(M, C^0(M))$ is a
differential space (provided $M$ is completely regular).  Given this
fact the reader may wonder if the only nontrivial examples are smooth
manifolds with the standard Cartan calculus.  Fortunately this is not
the case.  One class of examples come from closed subsets of Euclidean
spaces. In particular this class of examples includes compact
regions in Euclidean spaces with nonsmooth boundary, such as the ones
considered in \cite{Simsek}. %  

We start by proving a lemma that is essentially due to Newns and
Walker \cite{NW} (it is not stated this way in their paper but what
they prove amounts to the same thing).
\begin{lemma} \label{lem:NW}
Let $M$ be a topological space and $\scC= C^0(M)$ the $\cin$-ring of
continuous functions. Then $\cDer (\scC) = 0$.
\end{lemma}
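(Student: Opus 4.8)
The plan is to prove that $v(f)=0$ for every $v\in\cDer(C^0(M))$ and every $f\in C^0(M)$, by checking that the continuous function $v(f)$ vanishes at each point. First I would record the elementary fact (already observed in the paper) that a $\cin$-derivation is in particular an $\R$-algebra derivation, so $v$ is $\R$-linear and satisfies the Leibniz rule; applying Leibniz to $1=1\cdot 1$ gives $v(1)=2v(1)$, hence $v(1)=0$ and $v$ annihilates every constant function.

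Next, fix $f\in C^0(M)$ and a point $x\in M$. Subtracting the constant $f(x)$, which $v$ kills, reduces us to the case $f(x)=0$. The key device is the decomposition $f=f_+-f_-$ into the continuous nonnegative parts $f_\pm:=\tfrac12(|f|\pm f)\in C^0(M)$, each of which vanishes at $x$. Each $f_\pm$ has a continuous square root $\sqrt{f_\pm}\in C^0(M)$ and $f_\pm=(\sqrt{f_\pm})^2$, so the Leibniz rule gives $v(f_\pm)=2\sqrt{f_\pm}\cdot v(\sqrt{f_\pm})$. Evaluating at $x$ and using $\sqrt{f_\pm(x)}=\sqrt{0}=0$ makes the right-hand side vanish, so $v(f_\pm)(x)=0$, and therefore $v(f)(x)=v(f_+)(x)-v(f_-)(x)=0$. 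Since $x$ was arbitrary, $v(f)=0$, and since $f$ was arbitrary, $v=0$.

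There is essentially no obstacle here; the only points requiring the slightest care are, first, that $\sqrt{f_\pm}$ is a genuine element of $C^0(M)$ even though $\sqrt{\,\cdot\,}$ is not smooth at $0$ — this is harmless because we use $\sqrt{f_\pm}$ only as a ring element to which the Leibniz rule applies — and second, that one must pass to the positive and negative parts, since extracting a continuous square root requires nonnegativity. The very same computation shows that the a priori larger module $\Der(C^0(M))$ of $\R$-algebra derivations is zero as well, which is the form in which the result appears in Newns and Walker.
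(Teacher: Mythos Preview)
Your proof is correct and follows essentially the same argument as the paper: reduce to nonnegative functions via $f=f_+-f_-$, subtract the constant $f(x)$, and apply Leibniz to $f=(\sqrt f)^2$. Your order of the two reductions (subtract the constant first, then split into $f_\pm$) is actually slightly cleaner than the paper's, since it guarantees $f_\pm(x)=0$ while keeping $f_\pm\ge 0$, whereas the paper's order as written makes the nonnegativity claim after replacing $f$ by $f-f(x)$.
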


\begin{proof}
We argue that any $\R$-algebra derivation $v:\scC \to \scC$ is zero:
for any $f\in \scC$ and for any $x\in M$, $v(f)\,(x) = 0$.  Fix $x\in
M$.  Since 
\[
f = f_+ - f_-
\]
where $f_+ = \max(f, 0)$ and $f_- = \max(-f, 0)$, we may assume that
$f$ is nonnegative.  Since $v$ is a derivation $v(c)=0$ for any
constant function $c$.  Therefore $v(f) = v(f - f(x))$, and it is no
loss of generality to assume that $f(x) = 0$.  Since $f\geq 0$, it has
a nonnegative square root $\sqrt{f}$.  Since $f(x)=0$,  $\sqrt{f} (x)
= 0$ as well.    We now compute:
\[
v(f)\, (x) = v(\sqrt{f}\sqrt{f})\, (x) = \left( 2 \sqrt{f} \,v(\sqrt{f})
\right) \, (x) = 2\cdot 0 \cdot  v(\sqrt{f}) (x) = 0.
\]  
\end{proof}
\noindent
The following lemma will be useful in computing the promised nontrivial examples.
\begin{lemma} \label{lem:5.1}
Let $M\subset \R^m$ be a closed subset and $(M, \scF:= \cin(\R^m)|_M)$
the corresponding differential space (cf.\ Remark~\ref{rmrk:2.61}).
Let
\[
  I:= \{f\in \cin(\R^m)\mid f|_M = 0\},
\]
\[
  \scD:= \{V\in \chi(\R^m)\mid V(I)\subseteq I\}
\]
(here as before $\chi(\R^m)$ is the module of vector
fields on $\R^m$) and
\[
\scJ:= \{V\in \scD \mid V(x_i) \in I, 1\leq
i\leq m\} = \{V\in \scD \mid V(x_i)|_M = 0 \textrm{ for all }i\},
\]
where $x_1,\ldots, x_m:\R^m\to \R$ are the coordinate functions.
Then the $\scF$-module of $\cin$-derivations $\cDer(\scF)$ is
isomorphic to $\scD/\scJ$ (as an $\scF$-module).
\end{lemma}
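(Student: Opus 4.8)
The plan is to exhibit $\cDer(\scF)$ as the quotient $\scD/\scJ$ by producing a surjective module map $\scD\to\cDer(\scF)$ with kernel $\scJ$. The starting observation is that, because $M$ is closed in $\R^m$, restriction of functions is a surjective morphism of $\cin$-rings $\pi\colon\cin(\R^m)\to\scF$ whose kernel is exactly the ideal $I$, so $\pi$ identifies $\scF$ with $\cin(\R^m)/I$. Before building the comparison map I would record the bookkeeping needed for the statement to make sense: $\scD$ is a $\cin(\R^m)$-submodule of $\chi(\R^m)$ (if $V(I)\subseteq I$ and $f\in\cin(\R^m)$ then $(fV)(I)=f\,V(I)\subseteq I$ since $I$ is an ideal), $\scJ$ is a $\cin(\R^m)$-submodule of $\scD$, and $I\cdot\scD\subseteq\scJ$ (because $(hV)(x_i)=h\,V(x_i)\in I$ for $h\in I$); hence $\scD/\scJ$ is a module over $\cin(\R^m)/I=\scF$.

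Next I would construct $\Phi\colon\scD\to\cDer(\scF)$. Given $V\in\scD$, the composite $\pi\circ V\colon\cin(\R^m)\to\scF$ is a $\cin$-derivation of $\cin(\R^m)$ with values in the $\cin(\R^m)$-module $\scF$ (whose module structure factors through $\pi$), and it kills $I=\ker\pi$; hence it factors uniquely as $\pi\circ V=\bar V\circ\pi$ for an $\R$-linear map $\bar V\colon\scF\to\scF$. A routine computation with lifts, using that $\pi$ preserves all $\cin$-operations, shows $\bar V$ is a $\cin$-derivation of $\scF$; set $\Phi(V):=\bar V$. From $\overline{fV}=\pi(f)\cdot\bar V$ one reads off that $\Phi$ is $\cin(\R^m)$-linear.

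I would then compute kernel and image. For the kernel: $\Phi(V)=0$ means $V(f)\in I$ for every $f\in\cin(\R^m)$, and taking $f=x_i$ gives $V\in\scJ$; conversely, if $V(x_i)\in I$ for all $i$ then, using that a vector field on $\R^m$ equals $\sum_i V(x_i)\,\partial_i$, we get $V(f)=\sum_i V(x_i)\,\partial_i f\in I$ for every $f$, so $\ker\Phi=\scJ$. For surjectivity, given $w\in\cDer(\scF)$ I would choose lifts $b_i\in\cin(\R^m)$ of $w(x_i|_M)\in\scF$ and set $V:=\sum_i b_i\,\partial_i\in\chi(\R^m)$. For any $\tilde a\in\cin(\R^m)$ one has $\tilde a|_M=\tilde a_{\scF}(x_1|_M,\ldots,x_m|_M)$, so the defining identity for the $\cin$-derivation $w$ gives
\[
w(\tilde a|_M)=\sum_i (\partial_i\tilde a)|_M\cdot w(x_i|_M)=\sum_i (\partial_i\tilde a)|_M\cdot b_i|_M=V(\tilde a)|_M .
\]
Applying this with $\tilde a\in I$ forces $V(\tilde a)|_M=w(0)=0$, hence $V(I)\subseteq I$, i.e.\ $V\in\scD$; and then $\bar V(\tilde a|_M)=\pi(V(\tilde a))=V(\tilde a)|_M=w(\tilde a|_M)$ for all $\tilde a$, so $\Phi(V)=w$.

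Combining the three facts, $\Phi$ descends to an isomorphism $\scD/\scJ\xrightarrow{\ \sim\ }\cDer(\scF)$, and since $\Phi$ is $\cin(\R^m)$-linear this isomorphism respects the $\scF$-module structures. The step I expect to require the most care is surjectivity; the trick there is to define the candidate vector field $V$ purely through its prescribed action on the coordinate functions and then to notice that the single $\cin$-derivation identity for $w$ simultaneously forces $V\in\scD$ (so that $\bar V$ is well defined) and $\bar V=w$.
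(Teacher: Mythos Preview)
Your proof is correct and follows essentially the same route as the paper: build the map $\Phi\colon\scD\to\cDer(\scF)$ by $V\mapsto\bar V$ with $\bar V(f|_M)=V(f)|_M$, compute $\ker\Phi=\scJ$ via the coordinate functions and the identity $V=\sum_i V(x_i)\,\partial_i$, and prove surjectivity by lifting $w(x_i|_M)$ to coefficients of a candidate vector field and invoking the $\cin$-derivation identity. The only (minor) difference is cosmetic: you make the verification that the lifted $V$ lies in $\scD$ and the $\scF$-module bookkeeping explicit, whereas the paper leaves the former implicit in the computation $V(f)|_M=w(f|_M)$.
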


% \begin{remark} It is plausible that the quotient $\scD/\scJ$ should,
%   in general, be nonzero.  We will explicitly compute the quotient in
%   two cases: $M$ is the closure of an open subset $W\subset \R^m$ and
%   $M = \{(x,y)\in \R^2\mid xy =0\}$.
% \end{remark}

\begin{proof}%[Proof of Lemma~\ref{lem:5.1}]
Note that the $\cin$-ring structure on $\scF:= \cin(\R^m)|_M$ is given
by
\[
h_\scF(f_1|_M,\ldots, f_k|_M) := h \circ (f_1|_M,\ldots, f_k|_M)
\]  
for all $k\geq 0$, all $h\in \cin(\R^k)$ and all $f_1|_M,\ldots,
f_k|_M \in \cin(\R^m)|_M$ (on the right we view $(f_1|_M,\ldots,
f_k|_M)$ as a map from $M$ to $\R^k$).

Given $V\in \scD$, $V(f) \in I$ for all $f\in I$ by definition of
$\scD$.  Consequently
\[
\pi(V): \scF\to \scF, \qquad \pi(V)\,(f|_M):= V(f)|_M
\]  
is a well-defined map.  Since $V$ is a $\cin$-derivation, so is
$\pi(V)$.   We get a map $\pi:\scD \to \cDer(\scF)$. It is easy to
check that $\pi$ is $\R$-linear.  We now argue that $\pi$ is
surjective.

Given $v\in \cDer(\scF)$ there exist $a_i \in \cin(\R^m)$, $1\leq i\leq m$,
so that $a_i|_M = v(x_i|_M)$.  Let 
\[V = \sum_i a_i
\frac{\partial}{\partial x_i}.
\]
Then for any $f\in \cin(\R^m)$
\begin{align*}
  V(f)|_M &= \left( \sum _i a_i \partial_i
    f\right)|_M = \sum_i a_i|_M \cdot \partial _i f|_M = \sum_i \partial _i
  f|_M \cdot v(x_i|_M)\\& = \sum_i (\partial_i f)_\scF(x_1|_M,\ldots,
  x_m|_M) \cdot v(x_i|_M) \\&= v(f_\scF(x_1|_M, \ldots, x_m|_M)) \qquad \textrm{(since $v$ is a $\cin$-derivation)}\\
  &= v(f|_M),
\end{align*}
and then $\pi(V) = v$, and $\pi$ is onto.

We next argue that $\ker \pi = \scJ$.  If $V\in \ker\pi$ then
\[
0 = \pi(V) (x_i|_M) = V(x_i)|_M
\]  
for all $i$.   Hence $V\in
\scJ$ by definition of $\scJ$.  Conversely suppose $V\in \scJ$.  Then
for any $f\in \cin(\R^m)$
\[
\pi(V) \, (f|_M) = V(f)|_M  = \left( \sum_i \partial _i f \cdot
  V(x_i)\right)|_M  = \sum_i \partial _i f|_M \cdot 
  V(x_i)|_M = 0.
\]
 Therefore $\pi(V) = 0$ and $V\in \ker \pi$.  We conclude that $\ker
 \pi =\scJ$ and that
 \[
\overline{\pi} :\scD/\scJ \to \cDer (\scF),\qquad \overline{\pi}
(V+\scJ) = \pi (V) + \scJ
 \]  
 is an isomorphism of real vector spaces.

 For any $b\in I$, $V\in \scD$, $bV\in \scJ$.  This is because for any $i$,
 \[
(bV)(x_i)|_M = b|_M\cdot  V(x_i)|_M = 0 \cdot V(x_i)|_M = 0.  
\]   
Consequently $\scD/\scJ$ is an $\cin(\R^m)/I \simeq \scF$-module with
\[
g|_M  \cdot (V+\scJ) := gV +\scJ
\]  
for all $g|_M\in \scF$ and all $V\in \scD$.

Finally we check that $\overline{\pi} $ is $\scF$-linear.   First of all, for any $g, f \in \cin(\R^m)$
\[
\pi (gV) (f|_M) = \left((gV)(f)\right) |_M = g|_M \cdot V(f)|_M =\left( g|_M\cdot
  \pi (V)\right) (f|_M),
\]  
Hence $\pi (gV) = g|_M\pi(V)$.  Secondly
\[
\overline{\pi}(g|_M \cdot (V+\scJ)) = \overline{\pi} (gV+\scJ) =
\pi(gV) = g|_M \cdot \pi(V) = g|_M \cdot \overline{\pi}( V+\scJ).
\]  
\end{proof}  

\begin{remark} It is plausible that the quotient $\scD/\scJ$ should,
  in general, be nonzero.  We will explicitly compute the quotient in
  two cases: $M$ is the closure of an open subset $W\subset \R^m$ and
  $M = \{(x,y)\in \R^2\mid xy =0\}$.
\end{remark}

\begin{example} \label{ex:5.3}
Let $M = \{(x,y)\in \R^2\mid xy =0\}$ and $\scF = \cin(\R^2)|_M$.   We
argue that $\cDer (\scF)$ is isomorphic, as a real vector space, to
$\cin(\R)\times \cin(\R)$.  We use the notation of Lemma~\ref{lem:5.1}.
Then in our case $I= \{f\in \cin(\R^2)\mid f|_M = 0\}$.  Applying
Hadamard's lemma twice (\!\cite[p.\ 17]{Jet}) we see that the ideal $I$
is generated by one function, namely by $xy$:
\[
I = \langle xy \rangle.
\]  
We next argue that
\[
\scD = \{xa_1 \frac{\partial}{\partial x} + y a_2
\frac{\partial}{\partial y} \mid a_1, a_2 \in \cin(\R^2)\}
\]
If $V = xa_1 \frac{\partial}{\partial x} + y a_2
\frac{\partial}{\partial y}$ for some $a_1, a_2 \in \cin(\R^2)$ then
$V(xy) = xy a_1 + xy a_2 \in \langle xy \rangle$, hence $V\in \scD$.
Conversely suppose $V = b_1 \frac{\partial}{\partial x} + b_2
\frac{\partial}{\partial y} \in \scD$.  Then there is $g\in
\cin(\R^2)$ so that
\[
xy g = V(xy) = y b_1 + x b_2.
\]  
This implies that $0 = y b_1 (0, y)$ for all $y$.  Hence $b_1(0,y) =
0$ for all $y\not = 0$.  By continuity $b_1(0,y) = 0$ for all $y$.
Hadamard's lemma now implies that $b_1(x,y) = x a_1(x,y)$ for some
$a_1 \in \cin(\R^2)$.  Similarly $b_2(x,y) = y a_2(x,y)$ for some $a_2
\in \cin(\R^2)$.  

Similar argument shows that
\[
\scJ = \{ xy b_1 \frac{\partial}{\partial x} + xy b_2
\frac{\partial}{\partial y} \mid b_1, b_2 \in \cin(\R^2)\}.
\]  
We conclude that for $M = \{xy =0\}$ and $\scF = \cin(\R^2)|_M$,
\begin{align*}
\cDer (\scF) &= \scD/\scJ\\ & =  \{xa_1 \frac{\partial}{\partial x} + y a_2
\frac{\partial}{\partial y} \mid a_1, a_2 \in \cin(\R^2)\}/ \{ xy b_1 \frac{\partial}{\partial x} + xy b_2
\frac{\partial}{\partial y} \mid b_1, b_2 \in \cin(\R^2)\} \\& \simeq
\left(\cin(\R^2)/ y \cin(\R^2) \right)\times \left( \cin(\R^2)/ x
  \cin(\R^2) \right).
\end{align*}
Finally $\cin(\R) \simeq \cin(\R^2)|_{\{y = 0\}} \simeq
\cin(\R^2)/y\cin(\R^2)$ and similarly $\cin(\R)\simeq  \cin(\R^2)/ x
\cin(\R^2)$.   We conclude that
\[
\cDer \left(\cin(\R^2)|_{\{xy =0\}} \right)\simeq \cin(\R)\times \cin(\R).
\]
This makes sense geometrically: since $\pi: \scD \to \cDer(\scF)$ is
surjective, any derivation of $\scF$ extends (non-uniquely) to a vector
field on $\R^2$ that vanishes at $(0,0)$ and is tangent to coordinate
axes.  Hence we can identify ``vector fields'' on $\{xy=0\}$ with
pairs of vector fields on the coordinate axes that vanish at the
origin. The space of all such pairs is (isomorphic to) $\cin(\R)^2$.
 \end{example}

\begin{example}
Let $W\subset \R^m$ be an open set and $M = \overline{W}$ its closure.
As before $\scF = \cin(\R^m)|_M$.  We argue that $\cDer(\scF) $ is
isomorphic to $\{V|_M \mid V\in \chi(\R^m)\}$, where as before
$\chi(\R^m)$ is the space of vector fields on $\R^m$ (thought both as
$\cDer(\cin(\R^m))$ and as $\cin(\R^m, \R^m)$).

For $f\in \cin(\R^m)$, if $f|_M = 0$ then $f|_W= 0$ as well.  Since $W$ is open, for any
vector field $V\in \chi(\R^m)$, $f|_W = 0$ implies that $V(f)|_W =0$
and, by continuity, that $V(f)|_M =0$.
Hence $\scD = \chi(\R^m)$.

On the other hand for any $V =\sum a_i \frac{\partial }{\partial
  x_i}$, $V(x_i) = a_i$.  Hence $V(x_i)|_M = 0$ for all $i$ implies
that $a_i|_M = 0$ for all $i$ and therefore $V|_M =0$.   Thus
\[
\scJ = \{V\in \chi(\R^m) \mid V|_M = 0\}.
\]  

Therefore, by Lemma~\ref{lem:5.1}
\[
\cDer(\scF) = \scD/\scJ = \chi(\R^m)/\{V\in \chi(\R^m) \mid V|_M = 0\}
\simeq \{ V|_M \mid V\in \cin(\R^m, \R^m)\}.
\]  
\end{example}

\begin{remark}  The example above generalizes from closed subsets of
  $\R^m$ with dense interior to closed subsets of arbitrary manifolds
  with dense interior. Namely, 
let $N$ be a manifold, $W\subset N$ an open subset and $M =
\overline{W}$ the closure of $W$ in $N$.  
Then $\cDer(\cin(N)|_M)$ is isomorphic to the vector space of restrictions
of sections of the tangent bundle $TN$ to $M$:
\[
\cDer(\cin(N)|_M) \simeq \{X|_M\mid X \in \Gamma (TN)\}.
\]
A proof of this remark will take us too far afield and we omit it.
\end{remark}  
\appendix

\section{Vector fields on
  differential spaces}  \label{app:A}

The goal of the appendix is to prove the following theorem.

\begin{theorem} \label{thm:a1}
Let $(M, \scF)$ be a differential space, $\scF_M$ the associated sheaf
of $\cin$-rings, $\cin\Der(\scF)$ the Lie algebra of derivations of
the $\cin$-ring $\scF$ (cf.\ Lemma~\ref{lem:cder-lie}) and $\cin\Der(\scF_M)$ the Lie algebra of
derivations of the sheaf $\scF_M$ (cf.\ Lemma~\ref{lem:level03.6} ).
The global sections functor
\[
\Gamma: \cin\Der(\scF_M) \to \cin\Der(\scF), \qquad
\Gamma(\{v_U\}_{U\in\Open(M)}) = v_M
\]
is an isomorphism of Lie algebras.

In particular for any $\cin$-derivation $w:\scF\to \scF$ there is a
unique family of derivations $\{v_U\}_{U\in \Open(M)}$ with $v_U\in
\cin\Der(\scF_M(U)$ and the property that for any pair of open sets
$V, U$ with $V\subseteq U$ and any $h\in \scF_M(U)$
\[
v_V(h|_V) = \left. v_U(h) \right|_V.
\]  
\end{theorem}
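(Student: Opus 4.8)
The plan is to prove the three facts: $\Gamma$ is a homomorphism of Lie algebras, $\Gamma$ is injective, and $\Gamma$ is surjective. The homomorphism property is immediate: on both $\cin\Der(\scF_M)$ and $\cin\Der(\scF)$ the bracket is the commutator, $\Gamma$ is clearly $\R$-linear, and $([v,w])_M=v_M\circ w_M-w_M\circ v_M=[\Gamma(v),\Gamma(w)]$ by the construction of the bracket on $\cin\Der(\scF_M)$ (cf.\ Lemma~\ref{lem:level03.6}). So the content is bijectivity, and the one nontrivial input is the \emph{locality of $\cin$-derivations}: if $w\in\cin\Der(\scF)$, $x\in M$, and $h\in\scF$ vanishes on an open neighborhood $W$ of $x$, then $w(h)(x)=0$. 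To see this I would use that a differential space carries bump functions. Because the topology on $M$ is the initial topology generated by $\scF$, there are $a_1,\dots,a_k\in\scF$ and an open box $B\subset\R^k$ with $x\in a^{-1}(B)\subseteq W$, where $a=(a_1,\dots,a_k)$; choose $\chi\in\cin(\R^k)$ with $\chi\equiv 1$ near $a(x)$ and $\supp\chi$ a compact subset of $B$, and set $\phi:=\chi\circ a\in\scF$ (using closure of $\scF$ under composition with smooth functions, the first two conditions in Definition~\ref{def:sikorski}). Then $\phi(x)=1$ and $\supp\phi\subseteq a^{-1}(\supp\chi)\subseteq W$, so $\phi h\equiv 0$ (it vanishes on $W$ because $h$ does, and off $W$ because $\phi$ does). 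Since $w$ is in particular an $\R$-algebra derivation, $0=w(\phi h)=w(\phi)h+\phi\,w(h)$; evaluating at $x$ and using $h(x)=0$, $\phi(x)=1$ gives $w(h)(x)=0$.

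For injectivity, suppose $v=\{v_U\}\in\cin\Der(\scF_M)$ with $v_M=0$. Fix an open $U$, $f\in\scF_M(U)$, and $x\in U$. By Remark~\ref{rmrk:2.57} there is an open $W$ with $x\in W\subseteq U$ and $g\in\scF$ with $g|_W=f|_W$. Since $v$ is a map of presheaves, $v_U(f)|_W=v_W(f|_W)=v_W(g|_W)=v_M(g)|_W=0$, hence $v_U(f)(x)=0$. As $x$ and $f$ were arbitrary, $v_U=0$ for all $U$, so $v=0$. (This part does not use bump functions.)

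For surjectivity, given $w\in\cin\Der(\scF)$ I would define, for each open $U\subseteq M$, a map $v_U\colon\scF_M(U)\to\scF_M(U)$ by $v_U(f)(x):=w(g)(x)$, where $g\in\scF$ is any local representative of $f$ near $x$ (such $g$ exists by Remark~\ref{rmrk:2.57}). The locality lemma makes this well defined: two local representatives of $f$ at $x$ differ, near $x$, by an element of $\scF$ vanishing near $x$, so $w$ of their difference vanishes at $x$. Next: if $g$ represents $f$ on $W$ then $g$ also represents $f$ at every $y\in W$, so $v_U(f)|_W=w(g)|_W$; since $w(g)\in\scF$, Remark~\ref{rmrk:2.57} shows $v_U(f)\in\scF_M(U)$. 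Checking pointwise at each $x$ by passing to a common local representative, one verifies that $v_U$ is $\R$-linear and is a $\cin$-derivation of $\scF_M(U)$ — here one uses that $w$ is a $\cin$-derivation of $\scF$ and that the $\cin$-ring operations on $\scF$ and on $\scF_M(U)$ are both given by composition with smooth functions. Finally, for $V\subseteq U$ and $f\in\scF_M(U)$ the same local representatives compute $v_V(f|_V)$ and $v_U(f)$ at each point of $V$, so $v_V(f|_V)=v_U(f)|_V$; thus $v:=\{v_U\}$ is a map of presheaves and lies in $\cin\Der(\scF_M)$. Since the unique local representative of $g\in\scF=\scF_M(M)$ on $W=M$ is $g$ itself (using $\scF_M(M)=\scF$, Remark~\ref{rmrk:7.2}), $v_M(g)=w(g)$, i.e.\ $\Gamma(v)=w$.

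Together these give a Lie-algebra isomorphism, and the final ``in particular'' clause is just a restatement: the unique $v=\{v_U\}$ with $v_M=w$ is the family built in the surjectivity step, and $v_V(h|_V)=v_U(h)|_V$ is precisely its compatibility with restrictions, shown above. I expect the main obstacle to be the locality lemma — concretely, establishing that every differential space has enough bump functions adapted to the initial topology — since without it the well-definedness of $v_U$ in the surjectivity step collapses; the remaining verifications are routine pointwise bookkeeping.
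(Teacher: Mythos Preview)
Your proof is correct and follows essentially the same strategy as the paper: both reduce to the locality lemma (derivations kill functions vanishing near a point), proved via bump functions manufactured from the initial-topology description of the differential space, and then use local representatives to show injectivity and construct the inverse for surjectivity. The only cosmetic difference is that you phrase the construction of $v_U$ pointwise (defining $v_U(f)(x)$ via a local representative at $x$) while the paper phrases it in terms of open covers; these are equivalent bookkeeping choices.
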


\begin{remark} For any subset $Y\subset M$ of a differential space
$(M, \scF)$ there is an induced differential structure $\scF^Y$ on $Y$
--- see \cite{Sn}.  It is the smallest differential structure on $Y$
containing the set $\scF|_Y$ of restrictions of the functions in
$\scF$.  If the set $Y$ happens to be open, then the induced
differential structure $\scF^Y$ is (isomorphic to) the value of the
sheaf $\scF_M$ on $Y$: $\scF^Y = \scF_M(Y)$.  Moreover it is not too
hard to show that if $Z\subset Y$ then $\scF^Z = (\scF^Y)^Z$; see, for
example, \cite{KL}.  Consequently Theorem~\ref{thm:a1} implies that
for any pair of open subsets $V, U\in \Open(M)$ with $V\subset U$ we
have well-defined Lie algebra maps
\[
\rho^U_V: \cDer (\scF_M(U)) \to \cDer (\scF_M(V)).
\]  
Namely, given $v\in \cDer (\scF_M(U)) = \cDer(\scF^U)$ there is a
unique family $\{v_W\}_{W\in \Open(U)} \in \cDer ((\scF^U)_U)$ (the
clumsy notation $(\scF^U)_U$ stands for the structure sheaf of the
differential space $(U, \scF^U)$)  with $v_W \in \cDer( (\scF^U)_U(W)
) = \cDer(\scF_M(W))$ for each $W\in \Open(U)$.    We set
\[
\rho^U_V (v) = v_V.
\]
It is not hard to see that the presheaf of Lie algebras $U\mapsto
\cDer (\scF_M(U))$ with the restriction maps $\rho^U_V$ defined above
is (isomorphic to) the sheaf $\cin\Dert(\scF_M)$ of $\cin$-derivations
of the structure sheaf $\scF_M$  of the local $\cin$-ringed space $(M,
\scF)$ (cf.\ Definition~\ref{def:3.2level2}).
\end{remark}  

To prove Theorem~\ref{thm:a1} we need to recall that differential
spaces have bump functions.  In fact existence of bump functions is
equivalent to the condition that the topology on a differential space
$(M,\scF)$ is the smallest topology on $M$ making all the functions
in $\scF$ continuous.  I learned this fact from Yael Karshon.
Definition~\ref{def:a.31} and Lemma~\ref{lem:bump}  below are from \cite{KL}.

\begin{definition} \label{def:a.30} Let $(M,\scT)$ be a topological space,
  $C\subset M$ a closed set and $x\in M\smallsetminus C$ a point.  A
  {\sf bump function} (relative to $C$ and $x$) is a continuous
  function $\rho:M\to [0,1]$ so that $\supp \rho\cap C = \varnothing $
  and $\rho$ is identically 1 on a neighborhood of $x$.
\end{definition}

\begin{definition} \label{def:a.31} Let $(M,\scT)$ be a topological
  space and $\scF \subseteq C^0(M,\R)$ a collection of continuous
  real-valued functions on $M$. The topology $\scT$ on $M$ is {\sf
    $\scF$-regular} iff for any closed subset $C$ of $M$ and any point
  $x\in M\smallsetminus C$ there is a bump function $\rho\in \scF$
  with $\supp \rho \subset M\smallsetminus C$ and $\rho$ identically 1
  on a neighborhood of $x$.
\end{definition}  

\begin{lemma}\label{lem:bump}
Let $(M,\scT)$ be a topological space and $\scF \subset
  C^0(M,\R)$ a $\cin$-subring. Then $\scT$ is the smallest topology making all
  the functions in $\scF$ continuous if and only if the topology $\scT$ is $\scF$-regular.
\end{lemma}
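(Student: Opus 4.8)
The plan is to prove the two implications separately. Throughout, write $\scT_\scF$ for the coarsest topology on $M$ making every $f\in\scF$ continuous (so $\scT_\scF$ has the sets $f^{-1}(V)$, $f\in\scF$, $V\subseteq\R$ open, as a subbasis); since by hypothesis every $f\in\scF$ is already $\scT$-continuous, we always have $\scT_\scF\subseteq\scT$, and hence the statement ``$\scT$ is the smallest topology making the functions in $\scF$ continuous'' is just the statement $\scT=\scT_\scF$.

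For the direction ``$\scF$-regular $\Rightarrow\scT=\scT_\scF$'' I would argue $\scT\subseteq\scT_\scF$ pointwise: given $U\in\scT$ and $x\in U$, apply $\scF$-regularity to the $\scT$-closed set $C:=M\smallsetminus U$ and the point $x\notin C$ to obtain $\rho\in\scF$ with $\supp\rho\cap C=\varnothing$ and $\rho\equiv1$ on a neighbourhood of $x$. Then $\rho^{-1}(\R\smallsetminus\{0\})$ is a $\scT_\scF$-open set, it contains $x$, and it is contained in $\supp\rho\subseteq U$; so every point of $U$ has a $\scT_\scF$-neighbourhood inside $U$, whence $U\in\scT_\scF$. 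This direction is routine.

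The substantive direction is ``$\scT=\scT_\scF\Rightarrow\scF$-regular'', i.e.\ manufacturing bump functions inside $\scF$. Given a $\scT$-closed set $C$ and $x\in M\smallsetminus C$, since $M\smallsetminus C$ is $\scT_\scF$-open I would first pick finitely many $f_1,\dots,f_n\in\scF$ and a $\delta>0$ so that the ``box'' $\{\,y\in M:\ |f_i(y)-f_i(x)|<\delta\text{ for }1\le i\le n\,\}$ is contained in $M\smallsetminus C$. The key move is to replace this finite intersection of subbasic sets by a single sublevel set: set $g:=\sum_{i=1}^{n}\bigl(f_i-f_i(x)\bigr)^2$, which lies in $\scF$ because $\scF$, being a $\cin$-subring, contains the constants $f_i(x)$ and is closed under the $\cin$-operations (indeed $g=G_\scF(f_1,\dots,f_n)$ for the polynomial $G(t_1,\dots,t_n)=\sum_i(t_i-f_i(x))^2$). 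Then $g\ge0$, $g(x)=0$, and $\{g<\delta^2\}\subseteq M\smallsetminus C$, so $C\subseteq\{g\ge\delta^2\}$. Now choose a smooth plateau $\phi\in\cin(\R,[0,1])$ with $\phi\equiv1$ on $(-\infty,\delta^2/3]$ and $\phi\equiv0$ on $[\delta^2/2,\infty)$, and put $\rho:=\phi\circ g\in\scF$; then $\rho:M\to[0,1]$ is $\scT$-continuous, $\rho\equiv1$ on the open neighbourhood $\{g<\delta^2/3\}$ of $x$, and $\{\rho\neq0\}\subseteq\{g\le\delta^2/2\}$.

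The one step that needs real care — and the place where the $\cin$-ring hypothesis on $\scF$ is genuinely used — is checking that the \emph{support} of $\rho$, i.e.\ the \emph{closure} of $\{\rho\neq0\}$, misses $C$, not merely the open set $\{\rho\neq0\}$ itself. This is arranged by cutting $g$ off strictly below the threshold (at $\delta^2/2<\delta^2$): the set $\{g\le\delta^2/2\}$ is $\scT$-closed and disjoint from $C$, so $\supp\rho\subseteq\{g\le\delta^2/2\}$ gives $\supp\rho\cap C=\varnothing$, and $\rho$ is the desired bump function. The trivial case $C=\varnothing$ is covered by $\rho\equiv1$ (equivalently, by $n=0$, $g=0$ in the construction). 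I expect no further obstacles; the remaining verifications are bookkeeping.
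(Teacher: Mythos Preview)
Your proof is correct and follows essentially the same approach as the paper's: both directions are handled identically in spirit, and for the construction of the bump function you use the subbasis for $\scT_\scF$, pick a basic neighbourhood of $x$ inside $M\smallsetminus C$, and then invoke the $\cin$-ring structure to compose with a smooth plateau. The only cosmetic difference is that the paper composes directly with a multivariable bump $\rho:\R^k\to[0,1]$ supported in $I_1\times\cdots\times I_k$, whereas you first collapse to the single function $g=\sum_i(f_i-f_i(x))^2$ and then apply a one-variable plateau; both produce the same bump function.
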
  
  
\begin{proof}
Let $\scT_\scF$ denote the smallest topology making all the functions
in $\scF$ continuous.  The set 
\[
  \scS:= \{ f^{-1}(I) \mid  f \in \scF,  \text{ $I$ is an open
    interval } \}
\]  
is a sub-basis for $\scT_\scF$.  Since all the functions in $\scF$ are
continuous with respect to $\scT$, $\scT_\scF\subseteq \scT$.
Therefore it is enough to argue that $\scT\subseteq \scT_\scF $ if and
only if $\scT$ is $\scF$-regular.\\[4pt]
($\Rightarrow$)\quad Suppose $\scT \subseteq \scT_\scF$.  Let
$C\subset M$ be $\scT$-closed and $x$ a point in $M$ which is not in
$C$.  Then $M \setminus C$ is $\scT$-open.  Since $\scT \subseteq
\scT_\scF$ by assumption, $M \setminus C$ is in $\scT_\scF$.   
Then there exist functions $h_1,\ldots,h_k \in \scF$
and open intervals $I_1,\ldots,I_k$ such that 
$x \in \cap_{i=1}^k h_i^{-1}(I_i) \subset M \setminus C$.  There is a
$\cin$ function $\rho: \R^k \to [0,1]$ 
with $\supp \rho \subset I_1 \times \ldots \times I_k$ and the
property that 
$\rho = 1$ on a neighborhood of $(h_1(x),\ldots,h_k(x))$ in $\R^k$.
Then $\tau: = \rho \circ (h_1,\ldots,h_k)$ is in $ \scF$, since $\scF$ is a
$\cin$-subring of $C^0(M)$.  The function $\tau$ is a desired bump
function.\\[4pt]
($\Leftarrow$)\quad Suppose the topology $\scT$ is
$\scF$-regular. Let $U\in \scT$ be an open set.   Then $C= M\setminus
U$ is closed.  Since $\scT$ is $\scF$-regular, for any $x\in U$ there
is a bump function $\rho_x \in \scF$ with $\supp \rho_x \subset U$ and
$\rho_x (x) = 1$.   Then $\rho_x\inv ((0,\infty)) \subset U$
and $\rho_x\inv ((0,\infty)) \in \scT_\scF$.   It follows that
\[
U = \bigcup_{x\in U} \rho_x\inv ((0,\infty)) \in \scT_\scF.
\]  
Since $U$ is an arbitrary element of $\scT$, $\scT \subseteq \scT_\scF$.
\end{proof}

\begin{proof}[Proof of Theorem~\ref{thm:a1}]
  It is easy to see that $\Gamma$ is $\R$-linear and preserves
  brackets; this fact follows directly from the definition of the Lie
  algebra structure on $\cDer(\scF_M)$.  Therefore it is enough to
  show that $\Gamma$ is a bijection.

We start
by proving injectivity.  Suppose $v=\{v_U\}_{U\in \Open(M)} \in
\cDer(\scF_M)$ and $v_M =0$.  We argue that $v_U=0$ for all $U\in
\Open(M)$. By definition of $v$, for any pair of
open sets $V,U\subset M$ with $V\subset U$, for any $h\in \scF_M(U)$
\begin{equation} \label{eq:A.4.1}
   v_V(h|_V) = \left( v_U(h)\right)|_V.
\end{equation}
Hence, since $v_M =0$, for any $f\in \scF = \scF_M(M)$
\[
0 = v_W(f|_W)
\]
 for any $W\in \Open(M)$.
Now given $U\in \Open (M)$ and $\varphi\in \scF_M(U)$ there is an open
cover $\{U_\alpha\}_{\alpha \in A}$ of $U$ and $\{f_\alpha
\}_{\alpha\in A} \subset \scF$  % 
so
that $\varphi|_{U_\alpha} = f_\alpha |_{U_\alpha}$ for all $\alpha$
(cf.\ Remark~\ref{rmrk:2.57}).
Then
\[
\left(v_U(\varphi)\right)|_{U_\alpha} \stackrel{\eqref{eq:A.4.1}}{=}
  v_{U_\alpha} (\varphi|_{U_\alpha}) =  v_{U_\alpha}
  (f_\alpha|_{U_\alpha})  \stackrel{\eqref{eq:A.4.1}}{=} v_M(f_\alpha)
    |_{U_\alpha} =0.
\]  
Therefore $v_U(\varphi) =0$ for all $\varphi$. Hence $v_U= 0$ for
all $U\in \Open(M)$.  Thus $v=0$, and $\Gamma$ is injective.

We now argue that $\Gamma$ is surjective: given $w\in \cDer(\scF)$
there is $v= \{v_U\}_{U\in \Open(M)}\in \cDer (\scF_M)$ so that $v_M =w$.
We start by defining the components $v_U$ of $v$ on restrictions of
functions in $\scF$: given $U\in \Open(M)$ and $f\in \scF$ define
$v_U(f|_U)$ by
\[
v_U (f|_U) := w(f)|_U.
\]  
We need to show that $v_U (f|_U) $ is well-defined.  This amounts to
checking that the derivation $w$ is ``local'': for any $f\in \scF$, if
$f|_U=0$ then $w(f)|_U = 0$ as well.  This locality property of
derivations on differential spaces is well-known.  We give a proof to
spare the reader the need to consult other sources.  By
Lemma~\ref{lem:bump} given $x\in U$ there is a bump function $\rho \in
\scF$ with $\rho$ identically 1 near $x$ and support of $\rho$
contained in $U$.   Then $\rho f$ is identically zero, hence $0= w(\rho
f) = f w(\rho) + \rho w(f)$.  Evaluating both sides of the equation on
$x$ and using the fact that $f(x) =0$ while $\rho (x) =1$ we get
\[
0= 1\cdot  w(f) \, (x) + 0\cdot w(\rho)\, (x) = w(f)\,(x).
\]  
Since $x$ is arbitrary this proves that $w(f)|_U =0$, i.e.,
``locality.''

Now given $\varphi\in \scF_M(U)$ there is an open cover
$\{U_\alpha\}_{\alpha \in A}$ of $U$ and $f_\alpha \in \scF$ so that
$\varphi|_{U_\alpha} = f_\alpha |_{U_\alpha}$ for all $\alpha$.
Let $U_{\alpha  \beta}:= U_\alpha \cap U_\beta$.  Then
\[
f_\alpha |_{U_{\alpha  \beta}} = \varphi|_{U_{\alpha  \beta}} =
f_\beta |_{U_{\alpha  \beta}} .
\]  
Locality of $w$ then implies that
\[
w (f_\alpha) |_{U_{\alpha  \beta}}  = w(f_\beta) |_{U_{\alpha  \beta}} 
\]  
for all pairs of indices $\alpha, \beta$.  Therefore it makes sense to
define $v_U(\varphi)$ by setting 
\[
v_U(\varphi)|_{U_{\alpha} }:= w(f_\alpha)|_{U_\alpha}
\]  
for all $\alpha$.  It is not difficult to check that $v_U:\scF_M(U)\to
\scF_M(U)$ so defined is a $\cin$-derivation.   Furthermore by passing
to common refinements one can check that the
definition of $v_U$ does not depend on the choice of a cover
$\{U_\alpha\}_{\alpha \in A}$ or the collection $\{f_\alpha\}_{\alpha
  \in A}$.  In particular if $U= M$ we may take the cover to be the
singleton $\{M\}$ and then $v_M = w$.
Finally one checks that for all pairs of open subsets $V,
U$ of $M$ with $V\subset U$ and all $\varphi \in \scF_M(U)$
\[
v_V(\varphi|_V) = v_U(\varphi)|_V
\]  
for all $\varphi \in \scF_M(U)$.  Therefore the collection
$\{v_U\}_{U\in \Open(M)}$ is the desired  element of $\cDer( \scF_M)$
and the global sections map $\Gamma$ is surjective.
\end{proof}

\end{document}